\documentclass[reqno,12pt]{amsart}
\pdfoutput=1
\textheight 9.1in \textwidth 6.5in \topmargin -1cm
\oddsidemargin-.05cm \evensidemargin -.1cm \topmargin .1cm
\usepackage{amsmath,amsthm,amsfonts,amssymb,ifpdf}
\usepackage{amssymb}
\usepackage{amsmath}
\usepackage{amsthm}
\usepackage{thm-restate}
\usepackage{graphicx}
\usepackage[all]{xy}
\usepackage{enumerate}
\usepackage{tikz-cd}
\usepackage{tikz-network}
\usepackage{tikz}
\usetikzlibrary{matrix}
\usepackage{multicol}
\usepackage{enumerate}
\newcommand{\bigzero}{\mbox{\normalfont\Large\bfseries 0}}
\newcommand{\RomanNumeralCaps}[1]
    {\MakeUppercase{\romannumeral #1}}
\theoremstyle{plain} 
\newtheorem{theorem}{Theorem}[section]
\newtheorem{proposition}[theorem]{Proposition}
\newtheorem{lemma}[theorem]{Lemma}
\newtheorem{corollary}[theorem]{Corollary}
\newtheorem{question}{Question}

\theoremstyle{definition} \newtheorem{definition}[theorem]{Definition}
\newtheorem{example}[theorem]{Example}

\theoremstyle{remark} \newtheorem{remark}[theorem]{Remark}

\usepackage[all]{xy}

\ifpdf
  \usepackage[
    pdftex,
    colorlinks,%
    linkcolor=blue,citecolor=red,urlcolor=blue,
    hyperindex,%
    plainpages=false,%
    bookmarksopen,%
    bookmarksnumbered%
  ]{hyperref} 
 
 \usepackage{thumbpdf}
\else
  \usepackage{hyperref}
\fi

\def\deg{{\rm deg}}
\def\Sym{{\rm Sym}}

\def\lim{{\rm lim}}

\def\codim{{\rm codim}}

\def\rc{{\rm rc}}

\newcommand{\ncom}{\newcommand}
\ncom{\mylabel}[1]{{\rm (#1)}\label{#1}}
\ncom{\Hom}{{\textit{Hom}}}
\ncom{\eop}{{\hfill $\Box$}}
\begin{document}
\baselineskip=16pt



\setcounter{tocdepth}{1}

\title[Extending a noncommutative polynomial]{Gram-like matrix preserving extensions and completions of noncommutative polynomials}

\author{Arijit Mukherjee}
\address{Department of Mathematics, Indian Institute of Technology Madras, IIT P.O., Chennai, Tamil Nadu - 600 036, India.}
\email{mukherjee7.arijit@gmail.com}

\author{Arindam Sutradhar}
\address{Department of Mathematics, Indian Institute of Technology Madras, IIT P.O., Chennai, Tamil Nadu - 600 036, India.}
\email{arindam1050@gmail.com}

\begin{abstract}
Given a positive noncommutative polynomial $f$, equivalently a sum of Hermitian squares (SOHS), there exists a positive semidefinite Gram matrix that encrypts all the structural essence of $f$.  There are no available methods for extending a noncommutative polynomial to a SOHS keeping the Gram matrices unperturbed.  As a remedy, we introduce an equally significant notion of Gram-like matrices and provide linear algebraic techniques to get the desired extensions. We further use positive semidefinite completion problem to get SOHS and provide criteria in terms of chordal graphs and 2-regular projective algebraic sets.
\end{abstract}
\maketitle
\textbf{Keywords :} Noncommutative polynomial, sum of Hermitian squares, chordal graph, Castelnuovo-Mumford regularity, Betti table, free resolution, matrix completion.

\textbf{2020 Mathematics Subject Classification :} 14P05, 14N25, 13D02,  05C50, 11E25, 90C90. 
\tableofcontents

\section{Introduction}
\label{sec:Introduction}
Hilbert's 17th problem states whether every multivariate polynomial that takes only nonnegative values over the reals can be represented as sum of squares (SOS) of polynomials. Hilbert, in 1888, proved that it is true only for quadratic forms, binary forms and ternary quatrics in the commutative case. In the noncommutative setup, Helton proved that any polynomial is positive if and only if it is a sum of Hermitian squares (SOHS). The SOS and SOHS techniques are heavily used for polynomial optimization and convex optimization, in particular, in semidefinite programming (SDP). Many of the SOHS techniques are taken from real algebraic geometry and inspired from functional analysis.

The SOHS techniques apply across many different disciplines including control theory, probability and statistics. Out of many directions, hierarchy techniques are still very active areas of research in quantum information theory (cf. \cite{FangHamzi} and \cite{NavStePir}) and have been proven useful in discriminating the quantum states. A large portion of the theory for noncommutative polynomials is being developed by Helton, McCullough, Klep and their collaborators (cf. \cite{KlepCafPo1}, \cite{HeMc}, \cite{HeMcPu},  \cite{KlepPo} and \cite{KlSc}). The book ``Optimization of Polynomials in Non-commuting Variables'' (cf. \cite{BKP}) by Burgdorf, Klep and Povh is a beautiful introduction to the subject. Pablo Parillo has used SOS techniques in different SDP to produce different control theory oriented applications (cf. \cite{Parrilo}). The work of Blekherman and his collaborators on SOS techniques provide fruitful insight about the fundamental geometric properties of some projective algebraic sets (cf. \cite{BSV1}, \cite{BSV2} and \cite{BSSV}). Motivated by the theory of noncommutative polynomials, noncommutative functions have been developed over noncommutative sets and many researchers are working on the subject (cf. \cite{AglerMac} and \cite{Vinnikov}).

Our main contributions are threefold. First, we provide a linear algebraic construction to extend noncommutative polynomials to SOHS, by adding necessary terms, that ensures the Gram-like matrix of the SOHS part is a principal submatrix of the extension’s matrix (cf. Theorem \ref{Block PSD technique_with non-SOHS part having multiple terms}). Second, we leverage chordal graph theory to characterize when a partially specified noncommutative polynomial can be completed to a SOHS, linking matrix completion to SOHS completion (cf. Theorem \ref{main theorem_SOHS completion of a quasi-SOHS iff known pattern is chordal}). Third, we connect these completions to projective algebraic sets, showing that their 2-regularity guarantees SOHS completions (cf. Theorem \ref{main theorem_SOHS completion of a quasi-SOHS iff unknown pattern is 2-regular}). These results blend linear algebra, graph theory, and algebraic geometry.  One of the key aspects that the reader will take away from this paper is some techniques to produce ample SOHS.  Moreover, these techniques will be fruitful in polynomial optimization.


We now go through the chronology of the paper in a bit more detail.  In Section \ref{sec: Prerequisites}, we recall some prerequisites and discuss the notion of a Gram matrix \& monomial vector associated to a symmetric polynomial \& SOHS. We introduce the notion of a Gram-like matrix which reduces the size of Gram matrix and disregards the order of the constituent monomials in the monomial vector  (cf. Remark \ref{remark_comparing Gram matrix and Gram like matrix} for a detailed comparison between a Gram matrix and a Gram-like matrix).  Moreover, it keeps the soul of a Gram matrix intact.  

The following question captures the central theme of this paper:
\begin{question}\label{Introduction_main question_possible modification}
 Given a noncommutative polynomial $f \in \mathbb{R}\langle\underline{X}\rangle$ that includes SOHS components, can we construct a SOHS polynomial $ \widetilde{f} \in \mathbb{R}\langle\underline{X}\rangle $ that extends (or completes) $f$ such that the positive semidefinite Gram-like matrices of the SOHS components are principal submatrices of the positive semidefinite Gram-like matrix of $\tilde{f}$?   
\end{question}
Theorem \ref{Block PSD technique_with non-SOHS part having multiple terms}, Theorem \ref{main theorem_SOHS completion of a quasi-SOHS iff known pattern is chordal} and Theorem \ref{main theorem_SOHS completion of a quasi-SOHS iff unknown pattern is 2-regular} answer Question \ref{Introduction_main question_possible modification} in three different set up in three consecutive subsections.

\begin{enumerate}
\item 
In subsection \ref{subsec: Block Matrix Technique}, assuming that there exists some $\widetilde{f}$ satisfying the properties as in Question \ref{Introduction_main question_possible modification}, we first note that the structure of a positive semidefinite Gram-like matrix $G_{\widetilde{f}}$ of such $\widetilde{f}$ would be of the form $\begin{pmatrix}
    G_h & A\\
    A^t & B
\end{pmatrix}$, where $G_h$ is some positive semidefinite Gram-like matrix of $h$ (cf. Lemma \ref{Lemma_form of Gram-like matrix matrix of Gram-like matrix preserving extension}).  {\color{blue} Then we provide a way to produce a positive semidefinite matrix out of a partially specified symmetric matrix of the aforementioned structure only by controlling the diagonal elements of $B$ (cf. Lemma \ref{Lemma_generating a psd matrix by choosing the diagonals of the lower diagonal block}.}  Exploiting these two results, we further provide some necessary and sufficient conditions in terms of a representation of a SOHS part $h$ of $f$ and monomial decompositions of $f-h$ (cf. Theorem \ref{Theorem_conditions satisfied my monomial vectors if if G-l MPE exists}).      



We provide necessary and sufficient condition for $f=h+\sum_j d_j \delta_j$,
with $h$ SOHS and $\delta_j$ are not hermitian squares, to have a SOHS extension $\tilde{f}$. If for any decomposition of $\delta_j=l_kr_k$, $l_k$ and $r_k$ simultaneously do not belong to the set of monomial vectors of a Gram-like matrix $G_h$ and if $G_h$ is positive definite, we can always extend $f$ to a Gram-like matrix preserving $\tilde{f}$. But if $G_h$ is psd but not pd, we need to have additional conditions, namely, the column space of $A$ should be sitting inside the column space of $G_h$. We see that the problem boils down to choosing diagonal elements of lower diagonal block of a $2 \times 2$ block matrix, so that we can make the block matrix positive semidefinite.

This, in turn, helps us to find noncommutative polynomials $f$ that do not admit any such extension $\widetilde{f}$ (cf. Example \ref{Examples_polynomials that do not admit G-L MPE}).  We then provide necessary and sufficient conditions for the existence of such $\widetilde{f}$, when $G_{\widetilde{f}}$ takes a special form, namely as in \eqref{equation_structure of a Gram-like matrix preserving Gram-like matrix} (cf. Theorem \ref{Theorem_NASC for block matrix form of a Gram-matrix of the extension}).  This characterisation helps us to produce a plenty of SOHS polynomials such that their difference is also a SOHS, which is not true in general (cf. Remark \ref{Remark_producing examples of sohs-sohs=sohs}).       

Apart from these existential criteria of such $\widetilde{f}$ in Theorem \ref{Theorem_conditions satisfied my monomial vectors if if G-l MPE exists} and Theorem \ref{Theorem_NASC for block matrix form of a Gram-matrix of the extension}, we also provide a rigorous construction of $G_{\widetilde{f}}$ (and hence $\widetilde{f}$) for a class of noncommutative polynomials $f=h+\sum_{j=1}^ra_j\zeta_j$, $h$ being SOHS.  We do so whenever $\rc(\eta,i)\neq \zeta_j$, $1\leq j \leq r$, or equivalently, whenever the monomials $\zeta_j$ are not equal to any right end part of any monomials $\eta$ of $h$ (cf. Theorem \ref{Block PSD technique_with non-SOHS part having multiple terms}).



\item In subsection \ref{subsec : Positive semidefinite completion problem and SOHS via graphs}, we deal with Question \ref{Introduction_main question_possible modification} via graph theory.  In fact, we look at positive semidefinite completion problem in the context of completion of a quasi SOHS polynomial (cf. Definition  \ref{Definition_quasi SOHS} for details).  We prove the following theorem which describes a pattern made out of the known coefficients of a quasi SOHS polynomial that ensures a SOHS completion of the same (cf. Theorem \ref{main theorem_SOHS completion of a quasi-SOHS iff known pattern is chordal}).  The pattern is given in terms of a graph associated to a partial Gram-like matrix of the given quasi SOHS polynomial.  To be specific, we prove the following : 

Let $f\in \Sym\;\mathbb{R}\langle \underline{X}\rangle$ be a quasi SOHS polynomial w.r.t a representation $\mathbf{R}_f$, as in
\eqref{equation_ most general form of a representation of a symmetric polynomial with positive co-efficients of square terms}, along with conditions as in \eqref{conditions on the coefficients of general form of a representation of a symmetric polynomial with positive co-efficients of square terms}.  Then any such $f$ has a SOHS completion $\overline{f}^{\;G_f,\;W_k}$ w.r.t the partial positive semidefinite Gram-like matrix $G_f$ associated with $\mathbf{R}_f$ if and only if the specification graph of $G_f$ is chordal.  Here $W_k$ is the column vector $\begin{pmatrix}
\zeta_1 & \cdots & \zeta_k
\end{pmatrix}^{t}$, $\zeta_i$, $1\leq i \leq k$, being the monomials appearing in $\mathbf{R}_f$.

Here a positive semidefinite Gram-like matrix of any such SOHS completion keeps the positive semidefinite Gram-like matrices of SOHS parts of the given quasi SOHS polynomial as its principal submatrices.  We also prove a similar statement for Gram matrix as well, (cf. $(1)$ of Theorem \ref{main theorem_SOHS completion of a quasi-SOHS iff known pattern is chordal}).  

\item In subsection \ref{subsec : Positive semidefinite completion problem and SOHS via varieties}, we again answer when a quasi SOHS polynomial can be completed to a SOHS but now the pattern of known and unknown entries is guided by a projective algebraic set.  Moreover, the defining equations of this algebraic set are homogeneous square-free quadratic (commutative) monomials.  The nobleness of this elucidation is that a SOHS completion of some partial symmetric noncommutative polynomial of any even degree is controlled by some homogeneous (commutative) monomials of degree $2$.  To be more specific, we prove that $2$-regularity of such a projective algebraic set guarantees the SOHS completion.  Furthermore, it is deeply related to Theorem \ref{main theorem_SOHS completion of a quasi-SOHS iff known pattern is chordal} as well.  Precisely, we obtain the following (cf. Theorem \ref{main theorem_SOHS completion of a quasi-SOHS iff unknown pattern is 2-regular} for details):

Let $f\in \Sym\;\mathbb{R}\langle \underline{X}\rangle$ be a quasi SOHS polynomial w.r.t a representation $\mathbf{R}_f$, as in
\eqref{equation_ most general form of a representation of a symmetric polynomial with positive co-efficients of square terms}, along with conditions as in \eqref{conditions on the coefficients of general form of a representation of a symmetric polynomial with positive co-efficients of square terms}.  Then any such $f$ has a SOHS completion $\overline{f}^{\;G_f,\;W_k}$ w.r.t the partial positive semidefinite Gram-like matrix $G_f$ associated with $\mathbf{R}_f$ if and only if the subspace arrangement $V_{G_f}(\subseteq \mathbb{P}^{k-1})$ of $G_f$ is $2$-regular.

We provide some examples to display simultaneous applications of Theorem \ref{main theorem_SOHS completion of a quasi-SOHS iff known pattern is chordal} and Theorem \ref{main theorem_SOHS completion of a quasi-SOHS iff unknown pattern is 2-regular}. In these examples we use Macaulay2 software to calculate regularity via Betti tables and free resolutions.
\end{enumerate}

\section{Background material}
\label{sec: Prerequisites}
We introduce our notations and recall necessary definitions and results in this section.
\subsection{Noncommutative polynomial and SOHS}
\label{subsec: Non-commutative polynomial and SOHS}
Let $X_1,X_2,\dots,X_n$ be noncommuting variables. Generating all possible \textit{words}, also referred as \textit{monomials}, in these letters with finite length by concatenating the letters and adding $1$, the empty word, we get a free monoid. Let $\langle \underline{X} \rangle=\langle X_1,X_2,\dots,X_n \rangle$ be the free monoid generated by $X_1, X_2, \dots, X_n$. We take all possible finite linear combinations of words from $\langle \underline{X} \rangle$ with coefficients from $\mathbb{R}$ and we call them \textit{real noncommutative polynomials}. Denote the set of all real noncommutative polynomials as
\[\mathbb{R} \langle \underline{X} \rangle=\mathbb{R} \langle X_1,X_2,\dots,X_n \rangle= \Big\{\sum_{i=1}^{N} a_iW_i\mid \; a_i \in \mathbb{R}, N \in \mathbb{N}, W_i \in \langle \underline{X} \rangle \Big\}.\]
$\mathbb{R} \langle \underline{X} \rangle$ forms a free algebra. We equip $\mathbb{R} \langle \underline{X} \rangle$ with the $*$ operation
\begin{align*}
    \mathbb{R}\langle \underline{X} \rangle &\longrightarrow \mathbb{R} \langle \underline{X} \rangle\\
    f &\longmapsto f^*.
\end{align*}
Under this map, $\mathbb{R} \cup \{\underline{X}\}$ gets fixed and for given
\begin{align*}
    W&=X_1X_2\dots X_{n-1}X_n,\\
    W^*&=X_nX_{n-1}\dots X_2X_1.
\end{align*}
By the \textit{degree of a monomial} $W$, denoted by $\deg(W)$, we mean the length of $W$.  For example, $\deg(X_1X_2^3X_3)=\deg(X_1X_2X_2X_2X_3)=5$.  By the \textit{degree of a noncommutative polynomial} $f$, denoted by $\deg(f)$, we mean the degree of any one of its highest degree monomials. 

We reserve the notation $\Sym\; \mathbb{R} \langle \underline{X} \rangle$ to denote the set of all \textit{symmetric polynomials} of $\mathbb{R} \langle \underline{X} \rangle$, that is, 
\[\Sym\; \mathbb{R} \langle \underline{X} \rangle =\{f \in \mathbb{R} \langle \underline{X} \rangle \mid \; f=f^* \}.\]

Let $\langle \underline{X} \rangle_d$ be the set of all words in $\langle \underline{X} \rangle$ of at most degree $d$. Let $\mathbb{R}\langle \underline{X} \rangle_d$ be the set of all polynomials with monomials coming from $\langle \underline{X} \rangle_d$. The cardinality of  $\langle \underline{X} \rangle_d$ is $s(d,n)=\sum_{i=0}^d n^i$, where $n$ is the number of noncommuting variables in $\langle \underline{X} \rangle$. Let $W_{s(d,n)}$ be the $s(d,n) \times 1$ column vector of the words in $\langle \underline{X} \rangle_d$ taken in the lexicographic order. Given a polynomial ${f} \in \mathbb{R}\langle X \rangle_d$, $\mathbf{f} \Big(=({f_w})_{w \in \mathbb{R}\langle X \rangle_d} \Big)$ be its coefficient vector. Every 
polynomial ${f}$ of degree $d$ can be expressed as $f=\sum_{w \in \langle \underline{X} \rangle_d} f_w w={\mathbf{f}}^T W_{s(d,n)}=W_{s(d,n)}^*\mathbf{f}$.

\begin{definition}
    A noncommunicative polynomial $f \in \mathbb{R} \langle X \rangle$ is \textit{positive} if $f(A_1,\dots,A_n)$ is positive semidefinite for all tuples $(A_1, \dots,A_n)$ of real symmetric matrices of the same order.
\end{definition}

\begin{definition}
For any $p \in \mathbb{R}\langle \underline{X} \rangle$, the noncommutative polynomial $p^*p$ is called a \textit{Hermitian square}. A noncommutative polynomial $f \in \mathbb{R}\langle \underline{X} \rangle$ is said to be \textit{a sum of Hermitian squares} (or \textit{SOHS} in short) if it can be written as a sum of some Hermitian squares, i.e. there exists noncommutative polynomials $g_1,g_2,\dots,g_k \in \mathbb{R}\langle X \rangle$ for some $k \in \mathbb{N}$ such that 
\[f=g_1^*g_1+g_2^*g_2+\dots+g_k^*g_k.\]
\end{definition}

\begin{remark} \label{rem_Heltonresult}
A noncommutaive polynomial is positive if and only if it is SOHS. See \cite{H} for Helton's proof of the result and see \cite{McPu} for McCullough and Putinar's alternative proof.
\end{remark}

We now recall a notion that formalizes any right end part of a given monomial (cf. \cite[Section 3, p.~745]{KlepPo} or \cite[Definition 2.5, p.~38]{BKP}).
\begin{definition}\label{definition_right chip function}
    The \textit{right chip function} $\rc:\langle \underline{X} \rangle \times \mathbb{N}\rightarrow \langle \underline{X} \rangle$ is defined by 
\begin{equation*}
\rc(W_1W_2\cdots W_n,i)=\left\{ \begin{array}{ll}
W_{n-i+1}W_{n-i+2}\cdots W_n & \mbox{if $1\leq i < n$};\\\\
W_1W_2\cdots W_n & \mbox{if $i \geq n$};\\\\
1 & \mbox{if $i=0$}.\end{array} \right.
\end{equation*}
Here $\deg(W_i)=1$ for all $1\leq i \leq n$.
\end{definition}
\begin{example}
    Let $W=X_1^2X_2^2X_3X_2^2X_1$ be a noncommutative word. Then, $\rc(W,4)=X_3X_2^2X_1$, $\rc(W,8)=\rc(W,9)=X_1^2X_2^2X_3X_2^2X_1$ and $\rc(W,0)=1$.
\end{example}

\subsection{Some properties of positive definite and semidefinite matrices}
\label{subsec: Basic properties of positive definite and semidefinite matrices}


We now recall the characterisations of positive definite and positive semidefinite matrices in terms of their principal minors.
\begin{lemma}[Sylvester's criterion for positive definite matrices]\label{Sylvester's criterion for pd matrix}
    A real symmetric matrix is positive definite if all its leading principal minors are positive.
\end{lemma}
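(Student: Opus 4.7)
The plan is to proceed by induction on the size $n$ of the matrix. The base case $n=1$ is immediate: a $1 \times 1$ matrix $(a)$ is positive definite precisely when $a > 0$, and $a$ is its unique leading principal minor.

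For the inductive step, assume the criterion holds for $(n-1) \times (n-1)$ symmetric matrices, and let $A$ be an $n \times n$ real symmetric matrix whose leading principal minors $\det A_1, \det A_2, \ldots, \det A_n$ are all positive. Write $A$ in the block form
$$A = \begin{pmatrix} A_{n-1} & b \\ b^t & c \end{pmatrix}$$
where $A_{n-1}$ is the leading $(n-1) \times (n-1)$ principal submatrix, $b \in \mathbb{R}^{n-1}$ is a column vector, and $c \in \mathbb{R}$ is a scalar. The leading principal minors of $A_{n-1}$ are exactly $\det A_1, \ldots, \det A_{n-1}$, so they are positive; the induction hypothesis then gives that $A_{n-1}$ is positive definite, and in particular invertible.

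Next, I would use the block $LDL^t$ factorization (equivalently, Schur complement) to write
$$A = \begin{pmatrix} I & 0 \\ b^t A_{n-1}^{-1} & 1 \end{pmatrix} \begin{pmatrix} A_{n-1} & 0 \\ 0 & s \end{pmatrix} \begin{pmatrix} I & A_{n-1}^{-1} b \\ 0 & 1 \end{pmatrix},$$
where $s := c - b^t A_{n-1}^{-1} b$ is the Schur complement. Taking determinants yields $\det A = s \cdot \det A_{n-1}$. Since both $\det A > 0$ (hypothesis) and $\det A_{n-1} > 0$ (induction hypothesis), we conclude $s > 0$. The factorization above exhibits $A$ as congruent, via an invertible unitriangular matrix, to the block-diagonal matrix $\diag(A_{n-1},\, s)$, which is positive definite because $A_{n-1}$ is positive definite and $s > 0$. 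As congruence by an invertible matrix preserves positive definiteness, $A$ itself is positive definite, completing the induction.

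There is no substantial obstacle here; the argument is a textbook application of the Schur complement combined with induction, and the only care needed is to track that the leading principal minors of $A_{n-1}$ inherit positivity from those of $A$ so that the induction hypothesis can be invoked.
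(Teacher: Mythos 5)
Your argument is correct: the induction on the size of the matrix, the block factorization $A = L\,\mathrm{diag}(A_{n-1}, s)\,L^{t}$ with $L$ unitriangular, the determinant identity $\det A = s\,\det A_{n-1}$ forcing $s>0$, and the invariance of positive definiteness under congruence together give a complete and standard proof of the stated (sufficiency) direction. Note, however, that the paper does not prove this lemma at all; it simply cites Bapat's textbook (\cite[\S 3.3, 3.15, p.~27]{Ba1}), so there is no internal argument to compare against. Your Schur-complement route is a natural self-contained substitute, and it is in the same spirit as the Schur-complement characterizations the paper does state later (Lemma \ref{Lemma_psdness in terms of leading principal minor and Schur complement} and Lemma \ref{Lemma_psdness in terms of leading principal minor, column space and Schur complement}, both cited from Zhang), except that you only need the elementary invertible case $s=c-b^{t}A_{n-1}^{-1}b$ rather than generalized inverses. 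The one point worth keeping in mind is that this "leading minors" test genuinely requires positive definiteness: the analogous statement for positive semidefiniteness (Lemma \ref{Generalized Sylvester's criterion for psd matrix}) needs all principal minors, not just the leading ones, and your induction would break there because $A_{n-1}$ need not be invertible.
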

\begin{proof}
See \cite[\S 3.3, 3.15, p. 27]{Ba1}.
\end{proof}
\begin{lemma}[Generalized Sylvester's criterion for positive semidefinite matrices]\label{Generalized Sylvester's criterion for psd matrix}
    A real symmetric matrix is positive semidefinite if all its principal minors are nonnegative.
\end{lemma}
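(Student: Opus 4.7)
The plan is to reduce to Sylvester's criterion (Lemma \ref{Sylvester's criterion for pd matrix}) by a perturbation argument. Let $A$ be a real symmetric $n \times n$ matrix all of whose principal minors are nonnegative. First I would show that for every $\epsilon > 0$, the perturbed matrix $A + \epsilon I_n$ is positive definite, by checking that each of its leading principal minors is strictly positive.

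The crucial ingredient is the identity
\[
\det(M + \epsilon I_k) \;=\; \sum_{j=0}^{k} \epsilon^{\,k-j}\, s_j(M),
\]
where $s_j(M)$ denotes the sum of all $j\times j$ principal minors of a symmetric matrix $M$, with the convention $s_0(M)=1$. I would derive it via multilinearity of the determinant in the columns: each column of $M + \epsilon I_k$ splits as a column of $M$ plus $\epsilon$ times a standard basis vector, so expanding gives a sum indexed by subsets $S \subseteq \{1,\ldots,k\}$ in which the columns indexed by $S$ are replaced by $\epsilon\, e_i$'s, and after grouping terms by $|S| = k-j$ the expression collapses to the displayed formula. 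Applying this to every leading principal submatrix $A_k$ of $A$, and using that every principal minor of $A_k$ is itself a principal minor of $A$ and hence nonnegative, yields $\det(A_k + \epsilon I_k) > 0$ for all $\epsilon > 0$ and all $k$.

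Sylvester's criterion then gives that $A + \epsilon I_n$ is positive definite for every $\epsilon > 0$. To finish, I would pass to the limit: for any $x \in \mathbb{R}^n$,
\[
x^{T} A x \;=\; \lim_{\epsilon \to 0^{+}} x^{T}(A + \epsilon I_n)\, x \;\geq\; 0,
\]
so $A$ is positive semidefinite.

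The only genuine obstacle is verifying the determinant expansion above; the rest is a short continuity argument. An alternative route that sidesteps this identity is to work directly with the characteristic polynomial $\det(\lambda I_n - A)$, whose coefficients are, up to sign, sums of principal minors of $A$ and hence all nonnegative; a straightforward sign analysis then shows that this polynomial has no negative real root, so $A$ has no negative eigenvalue and, since $A$ is symmetric with real spectrum, is positive semidefinite.
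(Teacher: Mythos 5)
Your argument is correct, and it is more than the paper provides: the paper's ``proof'' of Lemma \ref{Generalized Sylvester's criterion for psd matrix} is only a citation to the literature (Bhatia), so there is no internal argument to compare against. Your route is the standard perturbation proof, and it fits the paper well because it reduces to Lemma \ref{Sylvester's criterion for pd matrix}, which the paper has already stated: the identity $\det(M+\epsilon I_k)=\sum_{j=0}^{k}\epsilon^{k-j}s_j(M)$ (the coefficients of the characteristic polynomial being the sums of principal minors) shows every leading principal minor of $A+\epsilon I_n$ is at least $\epsilon^k>0$, since each principal minor of a leading principal submatrix $A_k$ is a principal minor of $A$; hence $A+\epsilon I_n$ is positive definite, and letting $\epsilon\to 0^{+}$ in $x^{T}(A+\epsilon I_n)x\geq 0$ gives semidefiniteness. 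Your alternative via the characteristic polynomial $\det(\lambda I_n-A)$ having no negative root is equally valid and even shorter, since for $\lambda=-\mu<0$ all the terms $(-1)^n E_j\mu^{n-j}$ have the same sign and the leading one is nonzero; combined with the real spectrum of a symmetric matrix this again yields the claim. What your write-up buys is self-containment (modulo the principal-minor identity, which your multilinearity expansion correctly establishes); what the paper's citation buys is brevity. Either is acceptable here, since the lemma is classical.
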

\begin{proof}
See \cite[\S 1.2, p. 7]{Bh}.
\end{proof}
The following well-known results are easy consequences of spectral decomposition of a real symmetric matrix and will be used often in the proofs of the results in Section \ref{sec: how to add new monomials to get a SOHS}. The following results are direct consequences of the spectral theorem for real symmetric matrices.

\begin{lemma}\label{psdness is prserved by same set of row and column interchange}
Let $G$ be a positive semidefinite matrix with real entries of size $k$.  Then the matrix obtained from $G$ by first interchanging its $i$-th and $j$-th row (column respectively) followed by interchanging its $i$-th and $j$-th column (row respectively) is again a positive semidefinite matrix, for any $1\leq i,j\leq k$.  
\end{lemma}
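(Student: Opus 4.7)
The plan is to recognize the described operation as conjugation by a transposition permutation matrix and then deduce positive semidefiniteness by one of two equivalent routes. Let $P_{ij}$ denote the $k \times k$ permutation matrix obtained from the identity by swapping its $i$-th and $j$-th rows (equivalently, its $i$-th and $j$-th columns). The first step is to identify left multiplication $P_{ij}G$ with the operation of swapping the $i$-th and $j$-th rows of $G$, and right multiplication $G P_{ij}$ with swapping the $i$-th and $j$-th columns of $G$. Since these actions on opposite sides commute, performing the two operations in either order produces the same matrix, namely $P_{ij} G P_{ij}$. Noting that $P_{ij}$ is symmetric and satisfies $P_{ij}^2 = I$, one may also write this as $P_{ij} G P_{ij}^{T}$.

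The second step is to show that the resulting matrix $P_{ij} G P_{ij}^{T}$ is positive semidefinite. The most direct route is via the definition: for any $v \in \mathbb{R}^{k}$, set $w = P_{ij}^{T} v \in \mathbb{R}^{k}$; then
\[
v^{T}\bigl(P_{ij} G P_{ij}^{T}\bigr) v \;=\; (P_{ij}^{T} v)^{T} G (P_{ij}^{T} v) \;=\; w^{T} G w \;\geq\; 0,
\]
the final inequality being the positive semidefiniteness of $G$. This already finishes the argument.

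Since the statement itself advertises the spectral theorem, one may alternatively present the proof as follows. Invoke the spectral decomposition $G = Q D Q^{T}$ with $Q$ orthogonal and $D$ diagonal with nonnegative entries. Then
\[
P_{ij} G P_{ij}^{T} \;=\; (P_{ij} Q)\, D\, (P_{ij} Q)^{T},
\]
and since $P_{ij} Q$ is again orthogonal, this is a valid spectral decomposition whose eigenvalues (the diagonal entries of $D$) are nonnegative, hence $P_{ij} G P_{ij}^{T}$ is positive semidefinite. A third equally short option uses the generalized Sylvester criterion (Lemma \ref{Generalized Sylvester's criterion for psd matrix}): the principal minors of $P_{ij} G P_{ij}^{T}$ are exactly the principal minors of $G$ (with their index sets permuted by the transposition $(i\;j)$), hence they are all nonnegative.

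There is no genuine obstacle here; the only bookkeeping is to confirm that the two orderings of row- and column-swaps yield the same matrix and that this matrix equals $P_{ij} G P_{ij}^{T}$, after which any of the three arguments above completes the proof in one line.
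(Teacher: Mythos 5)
Your proposal is correct. The paper itself does not write out a proof of this lemma at all: it simply records it (together with Lemma \ref{Lemma_block psd(pd)implies psd(pd)}) as a ``direct consequence of the spectral theorem for real symmetric matrices,'' so any complete argument differs from the paper only in being explicit. Your key step --- recognizing that swapping the $i$-th and $j$-th rows and then the $i$-th and $j$-th columns (in either order) produces exactly the conjugate $P_{ij} G P_{ij}^{T}$ by the transposition permutation matrix --- is the right bookkeeping, and the one-line quadratic-form computation $v^{T} P_{ij} G P_{ij}^{T} v = (P_{ij}^{T}v)^{T} G (P_{ij}^{T}v) \geq 0$ is the most elementary way to finish; it does not even need the spectral theorem. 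The spectral-decomposition variant you include is precisely the justification the paper gestures at, and the third variant via Lemma \ref{Generalized Sylvester's criterion for psd matrix} (principal minors are permuted, not changed) is also fine and consistent with the tools the paper uses nearby. Any one of the three closing arguments suffices; there is no gap.
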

\begin{lemma}\label{Lemma_block psd(pd)implies psd(pd)}
Let $A$ be a block diagonal matrix with $A_1, A_2,\ldots, A_n$ being its diagonal blocks.  If each $A_i$ is a positive definite (respectively positive semidefinite) matrix, then so is $A$.
\end{lemma}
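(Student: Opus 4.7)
The plan is to use the defining quadratic-form characterization of positive (semi)definiteness, since the block-diagonal structure of $A$ makes the quadratic form split as a sum, with each summand controlled by one of the hypotheses on the $A_i$'s. The only issue one has to be careful about is separating the strictness in the positive definite case from the non-strictness in the positive semidefinite case.

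More precisely, let $k_i$ denote the size of $A_i$ and set $N=\sum_{i=1}^n k_i$, so $A$ is of size $N\times N$. First I would take an arbitrary column vector $x\in\mathbb{R}^N$ and partition it conformably with the block structure of $A$ as $x=\begin{pmatrix} x_1^t & x_2^t & \cdots & x_n^t \end{pmatrix}^t$ with $x_i\in\mathbb{R}^{k_i}$. Then, because all off-diagonal blocks of $A$ are zero, a direct block multiplication yields
\[
x^{t}Ax \;=\; \sum_{i=1}^{n} x_i^{t}A_i\, x_i.
\]
This identity is the heart of the argument.

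For the positive semidefinite case, if each $A_i$ is positive semidefinite then every summand $x_i^{t}A_i\, x_i$ is nonnegative, hence so is the sum; since $x$ was arbitrary, $A$ is positive semidefinite. For the positive definite case, suppose each $A_i$ is positive definite and let $x\neq 0$. Then at least one block $x_{i_0}$ is nonzero, so $x_{i_0}^{t}A_{i_0}\,x_{i_0}>0$ while all other summands are nonnegative; consequently $x^{t}Ax>0$, giving positive definiteness of $A$.

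As an alternative (and consistent with the spectral-theorem remark preceding the statement), one could instead observe that the characteristic polynomial factors as $\det(\lambda I - A)=\prod_{i=1}^n \det(\lambda I_{k_i}-A_i)$, so the spectrum of $A$ is the union (with multiplicities) of the spectra of the $A_i$. Since each $A_i$ has only nonnegative (respectively positive) eigenvalues, the same holds for $A$, which yields the claim. No genuine obstacle is anticipated; the proof is a short computation and the writeup only has to handle the PD and PSD cases in parallel.
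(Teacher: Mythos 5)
Your proof is correct. Note that the paper itself does not write out a proof of this lemma at all: it is stated among results that are asserted to be ``direct consequences of the spectral theorem for real symmetric matrices,'' so the intended route is the eigenvalue argument you mention only as an alternative (the spectrum of $A$ is the union of the spectra of the $A_i$, with multiplicities). Your primary argument via the splitting of the quadratic form, $x^{t}Ax=\sum_i x_i^{t}A_ix_i$ for a conformably partitioned $x$, is more elementary in that it avoids the spectral theorem entirely and handles the strict and non-strict cases cleanly in one computation; the spectral route, by contrast, immediately gives extra information (the full eigenvalue list of $A$) and is the one-line justification the authors had in mind. Either version is a complete and acceptable proof of the statement, and your write-up correctly isolates the only delicate point, namely the strictness in the positive definite case.
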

We recall another well-known property of a positive semidefinite matrix. We present a rudimentary argument, which is available in the literature, for the reader not familiar with this result. 
\begin{lemma}\label{Claim_about impact of zero diagonal entries for a positive semidefinite matrix}
Let $A$ be a $k\times k$ positive semidefinite matrix with real entries.  Let $(i,i)$-th entry of $A$ be zero, for some $1\leq i \leq k$.  Then the $i$-th column and the $i$-th row of $A$ is zero. 
\end{lemma}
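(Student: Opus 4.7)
The plan is to prove this by extracting well-chosen $2\times 2$ principal submatrices and invoking the generalized Sylvester criterion (Lemma \ref{Generalized Sylvester's criterion for psd matrix}). Since $A$ is symmetric, showing $a_{ij}=0$ for every $j\neq i$ will simultaneously kill the $i$-th row and the $i$-th column.

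Fix an arbitrary index $j$ with $1\le j\le k$ and $j\ne i$. Consider the $2\times 2$ principal submatrix
\[
A_{ij} \;=\; \begin{pmatrix} a_{ii} & a_{ij} \\ a_{ji} & a_{jj} \end{pmatrix} \;=\; \begin{pmatrix} 0 & a_{ij} \\ a_{ij} & a_{jj} \end{pmatrix},
\]
obtained by keeping rows and columns indexed by $i$ and $j$; here I used $a_{ii}=0$ and the symmetry $a_{ji}=a_{ij}$ of a PSD matrix. Because every principal submatrix of a PSD matrix is itself PSD (one may view this as the restriction of the quadratic form to a coordinate subspace, or cite Lemma \ref{Generalized Sylvester's criterion for psd matrix} applied to the ambient $A$), $A_{ij}$ is PSD, and its principal minors are nonnegative. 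In particular its determinant satisfies
\[
\det(A_{ij}) \;=\; 0\cdot a_{jj} - a_{ij}^2 \;=\; -a_{ij}^2 \;\ge\; 0,
\]
which forces $a_{ij}^2 \le 0$, hence $a_{ij}=0$.

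Since $j\neq i$ was arbitrary, $a_{ij}=0$ for every $j$, and by symmetry also $a_{ji}=0$ for every $j$. Together with $a_{ii}=0$, this shows both the $i$-th row and the $i$-th column of $A$ vanish. No step here is really an obstacle; the only thing to be careful about is justifying that the $2\times 2$ submatrix is itself PSD, which is immediate from the definition of a principal minor as used in the generalized Sylvester criterion. An equivalent, slightly more hands-on alternative would be to test the quadratic form $x^{T}Ax$ on vectors of the form $x=e_i + t\,e_j$ for $t\in\mathbb{R}$, obtaining $2t\,a_{ij}+t^{2}a_{jj}\ge 0$ for all $t$; choosing $t$ small with sign opposite to $a_{ij}$ again forces $a_{ij}=0$.
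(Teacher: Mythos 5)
Your proof is correct and follows essentially the same route as the paper: both arguments extract the $2\times 2$ principal submatrix on indices $i,j$ and use nonnegativity of principal minors (the generalized Sylvester criterion) to conclude $-a_{ij}^2\ge 0$, hence $a_{ij}=0$; the paper merely phrases it as a contradiction. Your extra remark testing $x^T A x$ on $e_i+te_j$ is a fine alternative but not needed.
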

\begin{proof}
Follows from Lemma \ref{Generalized Sylvester's criterion for psd matrix}.  Indeed, suppose the contrary.  Then there will be some $1\leq j \leq k$ such that $(i,j)$-th entry, and hence $(j,i)$-th entry of $A$ is nonzero and both are equal.  Then, the determinant of the $2\times 2$ principal submatrix of $A$ formed by $(i,i)$-th entry, $(i,j)$-th entry, $(j,i)$-th entry and $(j,j)$-th entry is negative, which contradicts Lemma \ref{Generalized Sylvester's criterion for psd matrix}.   
\end{proof}
We end this subsection by recalling a couple more well-known characterizations of a positive semidefinite matrix in terms of a given positive semidefinite principal minor and its Schur complement.  
\begin{lemma}\label{Lemma_psdness in terms of leading principal minor and Schur complement}
     Let $M$ be a symmetric matrix with real entries of the following form 
\begin{equation*}
M=
\left(\begin{array}{@{}c|c@{}}
  \begin{matrix}
  A
  \end{matrix}
  & B \\
\hline 
  B^{t} & 
  \begin{matrix}
  C
  \end{matrix}
\end{array}\right)\;,    
\end{equation*}
where $A$ and $C$ are square matrices. Then $M$ is positive semidefinite (respectively positive definite) if and only if $A$ is positive definite and $M/A(:=C-B^tA^{-1}B)$ is positive semidefinite (respectively positive definite).
\end{lemma}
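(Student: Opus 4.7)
The plan is to exploit the standard block $LDL^t$-factorization that underlies Schur complements. Since the expression $M/A = C - B^t A^{-1} B$ already presupposes that $A$ is invertible (which is included in the stated hypothesis $A$ positive definite), one can write
\[
M \;=\; \begin{pmatrix} I & 0 \\ B^t A^{-1} & I \end{pmatrix}\begin{pmatrix} A & 0 \\ 0 & M/A \end{pmatrix}\begin{pmatrix} I & A^{-1} B \\ 0 & I \end{pmatrix} \;=:\; L\, D\, L^t,
\]
where $L$ is unit lower triangular (so $\det L = 1$) and $D$ is block diagonal with diagonal blocks $A$ and $M/A$. The first step is a direct verification of this identity by block matrix multiplication, using the symmetry of $M$ to identify the $(2,1)$-block with $B^t$.

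The second step is to observe that the factorization $M = L D L^t$ is a congruence: for every column vector $v$ of compatible size, $v^t M v = (L^t v)^t D (L^t v)$, and $L^t$ is a bijection on the underlying vector space. Consequently, $M$ is positive semidefinite (respectively positive definite) if and only if $D$ is so; this is essentially Sylvester's law of inertia in the block form we need.

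The third step is to reduce positivity of $D$ to positivity of its diagonal blocks. The implication that $A$ and $M/A$ being PSD (resp.\ PD) forces $D$ to be PSD (resp.\ PD) is exactly Lemma \ref{Lemma_block psd(pd)implies psd(pd)}. The converse is immediate from the definition: restricting the quadratic form $v \mapsto v^t D v$ to vectors supported on the first block coordinates yields $v_1^t A v_1$, and similarly for $M/A$, so each diagonal block inherits PSD-ness (resp.\ PD-ness) as a principal submatrix. Combining this with Step two gives the desired equivalence.

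The main subtlety, rather than an obstacle, is the status of $A$ in the ``only if'' direction for the PSD case: from $M$ PSD alone one only obtains $A$ PSD as a principal submatrix, not PD. The statement as written builds invertibility of $A$ into the hypothesis (implicitly via the use of $A^{-1}$), and under that standing assumption PSD and PD coincide for $A$ in the PSD case, so the argument above delivers the stated biconditional in both the PSD and PD versions simultaneously.
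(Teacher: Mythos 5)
Your proof is correct. The paper does not argue this lemma itself; it simply cites Zhang (Theorem 1.12), and your block $LDL^t$ congruence argument --- verifying the factorization, invoking invariance of positive (semi)definiteness under congruence by the invertible $L^t$, and reducing to the diagonal blocks $A$ and $M/A$ via Lemma \ref{Lemma_block psd(pd)implies psd(pd)} and principal-submatrix restriction --- is exactly the standard proof behind that citation, so there is no substantive divergence. Your closing remark is also well taken: as literally stated the ``only if'' direction in the PSD case needs the invertibility of $A$ that is implicit in the notation $A^{-1}$ (otherwise $M=0$ is a counterexample), and under that standing assumption $A$ PSD and invertible forces $A$ positive definite, which closes the gap.
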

\begin{proof}
See \cite[Theorem 1.12, p.~34]{Zh}.    
\end{proof}
In fact, this lemma has a generalisation. By $A^-$ we denote \textit{a generalised inverse} of a given square matrix $A$, i.e, $AA^-A=A$.  Given a matrix $B$, not necessarily square, by $\mathcal{C}(B)$ we denote its column space.
\begin{lemma}\label{Lemma_psdness in terms of leading principal minor, column space and Schur complement}
 Let $M$ be a symmetric matrix with real entries of the following form 
\begin{equation*}
M=
\left(\begin{array}{@{}c|c@{}}
  \begin{matrix}
  A
  \end{matrix}
  & B \\
\hline 
  B^{t} & 
  \begin{matrix}
  C
  \end{matrix}
\end{array}\right)\;,    
\end{equation*}
where $A$ and $C$ are square matrices. Then $M$ is positive semidefinite if and only if $A$ is positive semidefinite, $\mathcal{C}(B)\subseteq \mathcal{C}(A)$, and $M/A(:=C-B^tA^-B)$ is positive semidefinite.
\end{lemma}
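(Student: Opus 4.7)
The plan is to prove this via a generalized Schur factorization that reduces $M$ to the block-diagonal matrix $\operatorname{diag}(A,\,M/A)$, after which both directions fall out of Lemma~\ref{Lemma_block psd(pd)implies psd(pd)}. Concretely, one fixes $A^-$ once and for all to be the Moore--Penrose pseudoinverse, which is symmetric since $A$ is. The central identity to establish is
\[
M \;=\; \begin{pmatrix} I & 0 \\ B^{t}A^- & I \end{pmatrix}
\begin{pmatrix} A & 0 \\ 0 & M/A \end{pmatrix}
\begin{pmatrix} I & A^{-}B \\ 0 & I \end{pmatrix},
\]
where the two outer factors are transposes of each other (using symmetry of $A^-$) and are both invertible with unit determinant, so $M$ is congruent to $\operatorname{diag}(A,\,M/A)$. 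The verification of this factorization is a direct block multiplication, but it uses the identities $AA^{-}B=B$ and $B^{t}A^{-}A=B^{t}$, both of which encode exactly the hypothesis $\mathcal{C}(B)\subseteq\mathcal{C}(A)$.

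The sufficiency direction then essentially writes itself: if $A$ and $M/A$ are positive semidefinite, the middle block-diagonal matrix is positive semidefinite by Lemma~\ref{Lemma_block psd(pd)implies psd(pd)}, and congruence preserves positive semidefiniteness (this is just the substitution $y=Lx$ in the associated quadratic form, exactly as in the nonsingular Schur-complement case of Lemma~\ref{Lemma_psdness in terms of leading principal minor and Schur complement}).

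For the necessity direction, I would proceed in three steps. First, $A$ is positive semidefinite as a principal submatrix of $M$. Second, to extract the column space condition, pick any $v\in\ker A$ (which, by symmetry of $A$, is the same as the orthogonal complement of $\mathcal{C}(A)$) and any vector $y$, and evaluate the quadratic form of $M$ at $\begin{pmatrix}v\\ ty\end{pmatrix}$ to obtain
\[
2t\,v^{t}By + t^{2}\,y^{t}Cy \;\geq\; 0 \qquad \text{for every } t\in\mathbb{R}.
\]
A polynomial in $t$ that is nonnegative on all of $\mathbb{R}$ must have vanishing linear coefficient when the quadratic coefficient is nonnegative, forcing $v^{t}By=0$; since $y$ is arbitrary, $B^{t}v=0$, giving $\ker A\subseteq \ker B^{t}$, equivalently $\mathcal{C}(B)\subseteq\mathcal{C}(A)$. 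Third, with the column space condition in hand, the factorization above is legitimate, and congruence (applied to $M$ via $L^{-1}$) transports positive semidefiniteness back to $\operatorname{diag}(A,\,M/A)$; the Schur complement $M/A$ is positive semidefinite as one of its diagonal blocks.

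The main technical point that needs to be addressed carefully is the well-definedness of $M/A$: a generalized inverse is not unique, so one should check that $B^{t}A^{-}B$ is independent of the particular choice once $\mathcal{C}(B)\subseteq\mathcal{C}(A)$ holds (alternatively, fix the Moore--Penrose pseudoinverse from the start, which is the path I would take to keep the argument streamlined). Everything else is bookkeeping around the factorization, and the proof strategy recovers Lemma~\ref{Lemma_psdness in terms of leading principal minor and Schur complement} as the special case where $A$ is positive definite and $A^-=A^{-1}$.
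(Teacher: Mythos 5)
Your argument is correct, but it is not ``the paper's proof'' in any meaningful sense: the paper disposes of this lemma with a citation to Zhang's book on the Schur complement (Theorem 1.20 there), whereas you supply a complete, self-contained proof. Your route is the standard generalized Schur (Albert-type) factorization: fixing $A^-$ to be the Moore--Penrose inverse (symmetric since $A$ is), checking $M=L^{t}\,\mathrm{diag}(A,\,M/A)\,L$ with $L=\left(\begin{smallmatrix} I & A^-B\\ 0 & I\end{smallmatrix}\right)$ under the identities $AA^-B=B$ and $B^{t}A^-A=B^{t}$, which are indeed equivalent to $\mathcal{C}(B)\subseteq\mathcal{C}(A)$; sufficiency then follows from congruence plus Lemma \ref{Lemma_block psd(pd)implies psd(pd)}, and necessity from (i) $A$ psd as a principal submatrix, (ii) the quadratic-form evaluation at $\left(\begin{smallmatrix}v\\ ty\end{smallmatrix}\right)$ with $v\in\ker A$, which correctly forces $B^{t}v=0$ (since the form vanishes at $t=0$, the linear coefficient must vanish), hence $\ker A\subseteq\ker B^{t}$, i.e. $\mathcal{C}(B)\subseteq\mathcal{C}(A)$, and (iii) transporting psd-ness back through the congruence to read off $M/A$ from the diagonal. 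The only detail worth writing out explicitly is the well-definedness point you flag: once $\mathcal{C}(B)\subseteq\mathcal{C}(A)$ one can write $B=AX$, whence $B^{t}A^-B=X^{t}AA^-AX=X^{t}AX$ for \emph{any} generalized inverse, so the statement does not secretly depend on the choice of $A^-$. What your approach buys over the paper's citation is self-containment, a proof that visibly degenerates to Lemma \ref{Lemma_psdness in terms of leading principal minor and Schur complement} when $A$ is invertible, and an explicit explanation of where the column-space hypothesis enters; the cost is only the extra bookkeeping around the pseudoinverse, which you handle correctly.
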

\begin{proof}
    See \cite[Theorem 1.20, p.~44]{Zh}.
\end{proof}

\subsection{Gram Matrix of a symmetric polynomial and a SOHS}
\label{subsec: Gram Matrix of a symmetric polynomial and a SOHS}
Though it is a well known fact that any symmetric noncommutative polynomial has a symmetric matrix associated to it, known as Gram matrix, we still include it in a bit detail as we need this later.
\begin{theorem}\label{Lemma_general form and gram matrix of a symmetric polynomial}
Any $f\in \Sym\;\mathbb{R}\langle \underline{X} \rangle$ of degree $2d$ in $n$ variables has a representation $\mathbf{R}_f$ given as follows:
\begin{equation}\label{equation_general form of a symmetric polynomial}
\mathbf{R}_f\; :\;f=\sum_{i=1}^{s(d,n)}a_i\zeta_i^{\ast}\zeta_i+\sum_{1\leq i < j\leq s(d,n)} b_{ij}(\zeta_i^{\ast}\zeta_j+\zeta_j^{\ast}\zeta_i),
\end{equation}
where 
\begin{equation}\label{conditions on the coefficients of general form of a representation of a symmetric polynomial}
\begin{split}
&s(d,n)=\sum_{k=0}^dn^k\;,\\
& a_i,b_{ij}\in \mathbb{R}\;,\zeta_i\in \mathbb{R}\langle \underline{X}\rangle,\; \deg(\zeta_i)\leq d,\;\zeta_i\neq \zeta_j \text{\;for\;all\;}1\leq i <j \leq s(d,n)\;.
\end{split}
\end{equation}
Hence, there exists a $s(d,n)\times s(d,n)$ symmetric matrix $G_f$ such that
\begin{equation}\label{Gram matrix for a symmetric polynomial}
f=W_{s(d,n)}^{\ast}\;G_f\;W_{s(d,n)},
\end{equation}
where $W_{s(d,n)}$ is the column vector of all monomials of degree at most $d$ taken in lexicographic order.
\end{theorem}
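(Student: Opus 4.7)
The plan is to argue by showing that every monomial of degree at most $2d$ admits a factorization of the form $\zeta_i^{\ast}\zeta_j$ with both $\zeta_i,\zeta_j\in \langle\underline{X}\rangle_d$, then to exploit the symmetry $f=f^{\ast}$ to pair these factorizations into the claimed form, and finally to read off $G_f$ as the symmetric coefficient matrix.

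For the factorization step, I would enumerate the monomials of $\langle\underline{X}\rangle_d$ in lexicographic order as $\zeta_1,\ldots,\zeta_{s(d,n)}$, the entries of $W_{s(d,n)}$. Given any monomial $w=X_{i_1}X_{i_2}\cdots X_{i_m}$ with $m\le 2d$, pick an integer $k$ with $\max(0,m-d)\le k\le \min(d,m)$ (which exists precisely because $m\le 2d$) and split $w = (X_{i_k}\cdots X_{i_1})^{\ast}(X_{i_{k+1}}\cdots X_{i_m})$. Both factors have degree at most $d$ and therefore equal some $\zeta_i$ and some $\zeta_j$. Hence every monomial occurring in $f$ is of the form $\zeta_i^{\ast}\zeta_j$.

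Next, write $f=\sum_{w} f_w w$, substitute the factorizations just chosen, and collect to obtain $f=\sum_{i,j}\tilde c_{ij}\,\zeta_i^{\ast}\zeta_j$ for some scalars $\tilde c_{ij}$. Since $f=f^{\ast}$, one has $f=\tfrac{1}{2}(f+f^{\ast})=\tfrac{1}{2}\sum_{i,j}\tilde c_{ij}(\zeta_i^{\ast}\zeta_j+\zeta_j^{\ast}\zeta_i)$. Setting $a_i:=\tilde c_{ii}$ and $b_{ij}:=\tfrac{1}{2}(\tilde c_{ij}+\tilde c_{ji})$ for $i<j$ yields exactly the shape (\ref{equation_general form of a symmetric polynomial}); the bookkeeping requirements (\ref{conditions on the coefficients of general form of a representation of a symmetric polynomial}) are immediate, since the $\zeta_i$ are pairwise distinct monomials of degree at most $d$ by construction. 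Finally, defining $G_f$ to be the symmetric matrix with $(G_f)_{ii}=a_i$ and $(G_f)_{ij}=(G_f)_{ji}=b_{ij}$ for $i<j$, the expansion $W_{s(d,n)}^{\ast}G_f W_{s(d,n)}=\sum_{i,j}(G_f)_{ij}\zeta_i^{\ast}\zeta_j$ collapses to the right-hand side of (\ref{equation_general form of a symmetric polynomial}), giving (\ref{Gram matrix for a symmetric polynomial}).

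The main obstacle is conceptual rather than computational: the factorization $w=\zeta_i^{\ast}\zeta_j$ is not unique (e.g.\ $X_1X_2X_1$ splits in more than one way, and self-adjoint monomials of even degree admit a canonical split $\zeta^{\ast}\zeta$ alongside other splits). This ambiguity is harmless for existence because we merely fix an arbitrary choice of factorization for each monomial; the symmetrization step averages out any asymmetry between $\tilde c_{ij}$ and $\tilde c_{ji}$, and so neither the representation $\mathbf{R}_f$ nor the Gram matrix $G_f$ is canonical, but at least one of each is produced, which is all that the statement asserts.
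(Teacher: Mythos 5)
Your proposal is correct and follows essentially the same route as the paper's own (rather more terse) proof: split each monomial of $f$ into a product $\zeta_i^{\ast}\zeta_j$ of two monomials of degree at most $d$, use $f=f^{\ast}$ to symmetrize the resulting coefficients, and read off the symmetric matrix $G_f$. Your version is in fact slightly more careful than the paper's sketch, since your choice $\max(0,m-d)\le k\le\min(d,m)$ explicitly handles odd-degree monomials and the degree bound on both factors, and the averaging $b_{ij}=\tfrac{1}{2}(\tilde c_{ij}+\tilde c_{ji})$ makes the symmetrization step completely explicit.
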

\begin{proof}
Firstly, it is easy to note that any degree $2k$ monomial can be written as a product of two degree $k$ monomials. Now, let $a\eta$ be a term of $f$.  As $f$ is chosen from $\Sym\;\mathbb{R}\langle \underline{X}\rangle$, then either $\eta^{\ast}=\eta$ or $a\eta^{\ast}$ must be another term of $f$.  Hence, the first part of the assertion follows.

Let $1=\zeta_1< \zeta_2< \cdots <\zeta_{s(d,n)}=X_n^d$ be all such monomials in ascending order, where we denote the lexicographic order in the set of all monomials by $<$.  Then we have,
\begin{equation*}\label{general form of a monomial vector of a symmetric polynomial}
W_{s(d,n)}=\left(\begin{array}{@{}c}
  \begin{matrix}
  \zeta_1 & \zeta_2 & \cdots & \zeta_{s(d,n)}
  \end{matrix}\end{array}\right)^{t}.
\end{equation*}
Choosing  
\begin{equation}\label{general form of a Gram matrix of a symmetric polynomial}
G_f=\begin{pmatrix}
a_1 & b_{12} & b_{13} & \cdots & b_{1s(d,n)}\\
  b_{12} & a_2 & b_{23} & \cdots & b_{2s(d,n)}\\
  b_{13} & b_{23} & a_3 & \cdots & b_{3s(d,n)}\\
  \vdots & \vdots & \vdots & \ddots & \vdots\\
  b_{1s(d,n)} & b_{2s(d,n)} & b_{3s(d,n)} & \cdots & a_{s(d,n)}
\end{pmatrix}\;,
\end{equation}
we have the second part of the assertion.
\end{proof}

\begin{definition}
Let $f$ be any symmetric noncommutative polynomial.  The matrix $G_f$, as in \eqref{general form of a Gram matrix of a symmetric polynomial}, is called \textit{a Gram matrix of} $f$.  It is also called the Gram matrix associated with the representation $\mathbf{R}_f$, as in \eqref{equation_general form of a symmetric polynomial}, along with conditions as in \eqref{conditions on the coefficients of general form of a representation of a symmetric polynomial}. 
\end{definition}
\begin{remark}\label{equation_properties of representation}
\begin{enumerate}
\item It can be noted that given a degree $2d$ symmetric noncommutative polynomial, it doesn't necessarily have unique representation.  For example, consider $f=5X_1^6+8X_2^4$.  Then $f$ has representations $5(X_1^3)^{\ast}X_1^3+4(X_2)^{\ast}X_2^3+4(X_2^3)^{\ast}X_2$ and $5(X_1^3)^{\ast}X_1^3+4(X_2^2)^{\ast}X_2^2+4(X_2^2)^{\ast}X_2^2$.
\item However, the moment we fix a representation of a degree $2d$ symmetric noncommutative polynomial, associated Gram matrix and monomial vector get fixed. 
\end{enumerate}
\end{remark}

It is easy to note that any SOHS is definitely a symmetric noncommutative polynomial.  We want to look at the converse.  The following theorem gives a criteria for a symmetric polynomial to be a SOHS in terms of its Gram matrix.
\begin{theorem}\label{Theorem_SOHS iff psd Gram matrix}
A symmetric polynomial $f$ of degree $2d$ in noncommuting variables is a SOHS if and only if there exists a positive semidefinite matrix $G_f$ such that 
\begin{equation}\label{SOHS iff psd Gram matrix_equation}
f=W_{s(d,n)}^{\ast}\;G_f\;W_{s(d,n)},
\end{equation}
where $W_{s(d,n)}$ is the column vector consisting of all words in $\langle \underline X \rangle$ of degree $\leq d$ taken in lexicographic order.
\end{theorem}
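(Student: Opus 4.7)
The plan is to prove both directions by exploiting a Cholesky-type (or spectral) factorization of positive semidefinite real symmetric matrices, which converts the quadratic form $W_{s(d,n)}^{\ast}\,G_f\,W_{s(d,n)}$ into a sum of Hermitian squares, and vice versa. Throughout, I use the fact that for $v\in\mathbb{R}^{s(d,n)}$ and $g:=v^{T}W_{s(d,n)}$, the $\ast$-operation satisfies $g^{\ast}=W_{s(d,n)}^{\ast}v$, because $\ast$ reverses words and fixes real scalars, while $v$ has real entries.

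For the easy direction, suppose $f$ is SOHS, so $f=\sum_{i=1}^{k}g_i^{\ast}g_i$ for some $g_i\in\mathbb{R}\langle\underline{X}\rangle$. I first argue that each $g_i$ can be taken to have degree at most $d$: since $\deg f=2d$ and the top-degree component of $\sum g_i^{\ast}g_i$ equals a sum of Hermitian squares of the top-degree parts of the $g_i$, the coefficient vector of this leading part gives a Gram expression whose vanishing on $f$ forces each such top part to be zero when $\deg g_i>d$. Thus each $g_i$ admits a coefficient vector $\mathbf{g}_i\in\mathbb{R}^{s(d,n)}$ with $g_i=\mathbf{g}_i^{T}W_{s(d,n)}$. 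Setting $G_f:=\sum_{i=1}^{k}\mathbf{g}_i\mathbf{g}_i^{T}$, which is positive semidefinite as a sum of rank-one PSD matrices, I compute
\[
W_{s(d,n)}^{\ast}\,G_f\,W_{s(d,n)}=\sum_{i=1}^{k}\bigl(W_{s(d,n)}^{\ast}\mathbf{g}_i\bigr)\bigl(\mathbf{g}_i^{T}W_{s(d,n)}\bigr)=\sum_{i=1}^{k}g_i^{\ast}g_i=f,
\]
yielding the desired PSD $G_f$.

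For the converse, assume there exists PSD $G_f$ with $f=W_{s(d,n)}^{\ast}\,G_f\,W_{s(d,n)}$. By the spectral theorem (or equivalently by writing $G_f=LL^{T}$ using a Cholesky-like factorization available for PSD matrices), I can write $G_f=\sum_{i=1}^{r}v_iv_i^{T}$ for some real column vectors $v_i\in\mathbb{R}^{s(d,n)}$, where $r=\operatorname{rank}(G_f)$. Defining $g_i:=v_i^{T}W_{s(d,n)}\in\mathbb{R}\langle\underline{X}\rangle$ (so $\deg g_i\leq d$ automatically), the identity $g_i^{\ast}g_i=W_{s(d,n)}^{\ast}\,v_iv_i^{T}\,W_{s(d,n)}$ gives
\[
f=W_{s(d,n)}^{\ast}\Bigl(\sum_{i=1}^{r}v_iv_i^{T}\Bigr)W_{s(d,n)}=\sum_{i=1}^{r}g_i^{\ast}g_i,
\]
exhibiting $f$ as a SOHS.

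The only genuinely subtle point is the degree-truncation argument in the forward direction, namely that any SOHS representation of a degree-$2d$ polynomial can be replaced by one using polynomials of degree at most $d$; the obstacle is purely bookkeeping with leading terms, since a nontrivial top-degree Gram coefficient vector would produce a nonzero PSD leading form that could not cancel in $f$. The backward direction is essentially algebraic manipulation once a PSD factorization is invoked, and no further machinery beyond the spectral theorem is required.
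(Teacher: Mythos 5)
Your proof is correct and is essentially the standard argument: the paper itself does not spell out a proof but only cites \cite{CLR} and \cite{BKP}, and your two directions (coefficient vectors $\mathbf{g}_i$ giving $G_f=\sum_i\mathbf{g}_i\mathbf{g}_i^{T}$ one way, a rank-one decomposition $G_f=\sum_i v_iv_i^{T}$ the other way) are exactly the proof given there. The degree-truncation step you flag is indeed the only delicate point, and your reasoning for it is sound, since in the free algebra the coefficient of $w^{\ast}w$ in a sum of Hermitian squares of homogeneous degree-$e$ polynomials is $\sum_i (p_i)_w^2$, so the vanishing of the top-degree part forces each top part to vanish.
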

\begin{proof}
See \cite[\S 2, p. 105-110]{CLR} or \cite[Proposition 1.16, p.~6]{BKP}.
\end{proof}
\begin{remark}\label{Remark_doesn't guarantee the non-negativity of the matrix}
To express a symmetric polynomial of degree $2d$ in the form as in \eqref{Gram matrix for a symmetric polynomial}, it is not always necessary to take all monomials of degree at most $d$ in $W_{s(d,n)}$.  For example, consider the polynomial $f= 1+2X_1+X_1^2+X_1X_2^2+2X_2^2+X_2^2X_1+X_2X_1^2X_2+X_2^4$.  Then for 
\begin{equation*}
W_5=
\begin{pmatrix}
1 & X_1 &  X_1X_2 & X_2 & X_2^2
\end{pmatrix}^{t}\;,
\end{equation*} 
and 
\begin{equation*}
G=\begin{pmatrix}
1 & 1 & 0 & 0 & 0\\
1 & 1 & 0 & 0 & 1\\
0 & 0 & 1 & 0 & 0\\
0 & 0 & 0 & 2 & 0\\
0 & 1 & 0 & 0 & 1
\end{pmatrix}\;,
\end{equation*}
we have $f=W_5^{\ast}\;G\;W_5$.  But the following Gram matrix of $f$ \begin{equation*}
G_f=\begin{pmatrix}
1 & 1 & 0 & 0 & 0 & 0 & 0\\
1 & 1 & 0 & 0 & 0 & 0 & 1\\
0 & 0 & 0 & 0 & 0 & 0 & 0\\
0 & 0 & 0 & 1 & 0 & 0 & 0\\
0 & 0 & 0 & 0 & 2 & 0 & 0\\
0 & 0 & 0 & 0 & 0 & 0 & 0\\
0 & 1 & 0 & 0 & 0 & 0 & 1
\end{pmatrix}
\end{equation*}
is of much bigger size.
\end{remark}
As the choice of the monomial vector $W_{s(d,n)}$ is very strict in Theorem \ref{Lemma_general form and gram matrix of a symmetric polynomial} and Theorem \ref{Theorem_SOHS iff psd Gram matrix}, there are several techniques to throw out the unnecessary monomials and making the matrix of smaller size.  For example Newton chip method, augmented Newton chip method etc (cf. \cite[Section 2.3 \& 2.4]{BKP}) has been used to reduce the size of the column vector $W_{s(d,n)}$, as in \eqref{SOHS iff psd Gram matrix_equation}, and hence the size of matrix $G_f$.  Moreover, while putting the monomials in $W_{s(d,n)}$, usually lexicographic order is followed.  But as long as enough monomials is being collected in the monomial vector to express a given SOHS $f$,  Lemma \ref{psdness is prserved by same set of row and column interchange} says that the order doesn't matter.  We demonstrate this through the following example.
\begin{example}\label{example_size of the monomials doesn't matter}
Consider $f=X_1^{2}+4X_1X_2+4X_2X_1+16X_2^2$.  Then, for $(W^{\prime}_2)^{\ast}=\begin{pmatrix}
X_1 & X_2
\end{pmatrix}$, $(W^{\prime \prime}_2)^{\ast}=\begin{pmatrix}
X_2 & X_1
\end{pmatrix}$, $G^{\prime}=\begin{pmatrix}
1 & 4\\
4 & 16
\end{pmatrix}
$ and $G^{\prime \prime}=\begin{pmatrix}
16 & 4\\
4 & 1
\end{pmatrix}
$, we have  $ (W^{\prime}_2)^{\ast}\;G^{\prime}\;W_2^{\prime}=f=(W^{\prime \prime}_2)^{\ast}\;G^{\prime \prime}\;W_2^{\prime \prime}$.   
\end{example}
It is evident from Remark \ref{Remark_doesn't guarantee the non-negativity of the matrix} and Example \ref{example_size of the monomials doesn't matter} that to be able to reduce the size of the monomial vector (and hence that of a Gram matrix) whenever possible and write the monomial entries in that monomial vector in any arbitrary order, while expressing a symmetric or SOHS polynomial in the required form, puts us in a resilient situation.  Therefore, we formalize all of this in the upcoming section after putting things in a particular perspective.

\section{Extending a noncommutative polynomial to a SOHS}
\label{sec: how to add new monomials to get a SOHS}
    
In this section, whenever we consider a noncommutative polynomial $f\in \mathbb{R}\langle \underline{X} \rangle$, where $\langle \underline{X} \rangle=\langle X_1,X_2,\dots,X_n \rangle$, we mean that all the variables $X_i$, $1\leq i \leq n$, are essential to express $f$.  Moreover, we want to find a SOHS extension $\widetilde{f}$ of $f$ in $\mathbb{R}\langle \underline{X}\rangle$ only.  Specifically, we now prepare the ground for defining Gram-like matrices, which will help us answer the Question \ref{Introduction_main question_possible modification} that captures the main theme of this article.


\subsection{Gram-like matrices for symmetric and SOHS noncommutative polynomials}
\label{subsec:  matrix}
Recall that Theorem \ref{Theorem_SOHS iff psd Gram matrix} provides a way of getting a SOHS polynomial of degree $2d$ in $n$ variables whenever we have a $s(d,n)\times s(d,n)$ positive semidefinite matrix.  We observe that a positive semidefinite matrix of arbitrary size also produces a SOHS of correct degree.   
\begin{proposition}\label{proposition_order of monomials and size of monomial vector do not matter}
Let $G$ be a $k\times k$ positive semidefinite matrix and $W_k$ be a column vector of size $k$ with $k$ many distinct monomial entries (not necessarily in lexicographical order).  Then 
\begin{enumerate}
\item The polynomial $W_k^{\ast}\;G\;W_k$ is a SOHS polynomial. 
\item Moreover, if no row (and therefore column) of $G$ is zero, then degree of the polynomial $W_k^{\ast}\;G\;W_k$ is twice the degree of any of the highest degree monomials in $W_k$.
\end{enumerate}   
\end{proposition}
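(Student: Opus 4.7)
The plan is to invoke the spectral theorem for real symmetric matrices.  Since $G$ is positive semidefinite, I would diagonalise $G = Q^t D Q$ with $Q$ orthogonal and $D = \diag(\lambda_1,\ldots,\lambda_k)$, $\lambda_i\geq 0$.  Setting $L := D^{1/2} Q$ gives $G = L^t L$, so that
\begin{equation*}
W_k^{\ast}\,G\,W_k \;=\; (L W_k)^{\ast}(L W_k) \;=\; \sum_{i=1}^{k} p_i^{\ast} p_i,
\end{equation*}
where $(p_1,\ldots,p_k)^t = L W_k$ is a column vector with entries in $\mathbb{R}\langle \underline{X}\rangle$ (each $p_i$ is a real linear combination of the monomial entries of $W_k$).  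This realises $W_k^{\ast} G W_k$ as a SOHS, establishing $(1)$.

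\textbf{Part (2).}  Let $d$ denote the largest degree occurring among the entries of $W_k$, and fix any $\zeta_i$ in $W_k$ with $\deg(\zeta_i)=d$, sitting in position $i$.  The plan is to show that the monomial $\zeta_i^{\ast}\zeta_i$ of degree $2d$ survives, with coefficient $G_{ii}>0$, in the expansion $\sum_{j,l} G_{jl}\,\zeta_j^{\ast}\zeta_l$ of $W_k^{\ast}GW_k$.  I would first apply Lemma \ref{Claim_about impact of zero diagonal entries for a positive semidefinite matrix} in its contrapositive form: since the $i$-th row of $G$ is not identically zero, $G_{ii}\neq 0$, and positive semidefiniteness forces $G_{ii}>0$.

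The main subtlety is the non-cancellation step, namely ruling out that any cross term $G_{jl}\,\zeta_j^{\ast}\zeta_l$ with $(j,l)\neq (i,i)$ produces the word $\zeta_i^{\ast}\zeta_i$.  For this I would argue purely by word-length bookkeeping in the free monoid $\langle\underline{X}\rangle$: equality of the concatenated words $\zeta_j^{\ast}\zeta_l$ and $\zeta_i^{\ast}\zeta_i$ forces $\deg(\zeta_j)+\deg(\zeta_l)=2d$, which in turn forces $\deg(\zeta_j)=\deg(\zeta_l)=d$ since $d$ is maximal; comparing the first $d$ and the last $d$ letters then yields $\zeta_j=\zeta_i$ and $\zeta_l=\zeta_i$.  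The distinctness of the entries of $W_k$ now forces $j=l=i$, a contradiction.  Hence $\zeta_i^{\ast}\zeta_i$ appears in $W_k^{\ast}GW_k$ with coefficient exactly $G_{ii}>0$, giving $\deg(W_k^{\ast}GW_k)\geq 2d$; the reverse bound is immediate from $\deg(\zeta_j^{\ast}\zeta_l)\leq 2d$ for all $(j,l)$, so $\deg(W_k^{\ast}GW_k)=2d$, as required.
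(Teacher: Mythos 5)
Your proof is correct, but Part (1) takes a genuinely different route from the paper's. For (1) the paper never factors $G$: it pads $G$ with zero rows and columns to a full $s(d,n)\times s(d,n)$ matrix, permutes rows and columns so that the entries of $W_k$ line up with the lexicographically ordered vector $W_{s(d,n)}$ (invoking Lemma \ref{Lemma_block psd(pd)implies psd(pd)} and Lemma \ref{psdness is prserved by same set of row and column interchange} to preserve positive semidefiniteness), and then concludes by citing Theorem \ref{Theorem_SOHS iff psd Gram matrix}. Your spectral factorization $G=L^{t}L$ instead re-proves the relevant direction of Theorem \ref{Theorem_SOHS iff psd Gram matrix} directly at size $k$: it is self-contained and exhibits an explicit SOHS decomposition $\sum_{i} p_i^{\ast}p_i$, while the paper's padding construction is what the authors later lean on (Remark \ref{remark_comparing Gram matrix and Gram like matrix}, item (2)) to argue that a Gram-like matrix encodes all the information of a genuine Gram matrix; your argument does not supply that by-product, but as a proof of the proposition it is complete. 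For (2) you follow essentially the paper's argument — a nonzero $i$-th row together with Lemma \ref{Claim_about impact of zero diagonal entries for a positive semidefinite matrix} forces $G_{ii}>0$, producing the degree-$2d$ term $G_{ii}\,\zeta_i^{\ast}\zeta_i$ — but you add the word-length bookkeeping showing that no cross term $\zeta_j^{\ast}\zeta_l$ with $(j,l)\neq(i,i)$ can equal the word $\zeta_i^{\ast}\zeta_i$ (both factors must have degree exactly $d$, and comparing the first and last $d$ letters plus distinctness of the entries of $W_k$ forces $j=l=i$). This non-cancellation step is left implicit in the paper, so your version is slightly more careful at no extra cost.
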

\begin{proof}
\begin{enumerate}
\item  Let $d$ be the highest degree among the degrees of the constituent monomials of $W_k$.  Let $W_{s(d,n)}$ be the monomial vector formed by taking the remaining $s(d,n)-k$ many monomials of degree at most $d$ along with the monomials that are already present in $W_k$ and then rearranging them in lexicographic order.  This rearrangement is nothing but a permutation between $\{1,2,\cdots,s(d,n)\}$ to itself and we denote it by $\sigma \in S_{s(d,n)}$, the symmetric group on $s(d,n)$ many symbols.  Let $G^{\prime}$, a $s(d,n)\times s(d,n)$ matrix, be defined as follows:
\begin{equation*}
G^{\prime}:=
\left(\begin{array}{@{}c|c@{}}
  \begin{matrix}
  G
  \end{matrix}
  & \bigzero \\
\hline
  \bigzero &
  \bigzero
\end{array}\right)\;.
\end{equation*}
Let ${G^{\prime}}^{\sigma}$, another $s(d,n)\times s(d,n)$ matrix, be obtained from $G^{\prime}$ by permuting its rows and columns by $\sigma$.  Then ${G^{\prime}}^{\sigma}$ is positive semidefinite as $G$ is so and by Lemma \ref{Lemma_block psd(pd)implies psd(pd)} \& Lemma \ref{psdness is prserved by same set of row and column interchange}.  Moreover, we have the following equality:
\begin{equation*}
W_k^{\ast}\;G\;W_k=W_{s(d,n)}^{\ast}\;{G^{\prime}}^{\sigma}\;W_{s(d,n)}\;.
\end{equation*}
Therefore, the assertion follows from Theorem \ref{Theorem_SOHS iff psd Gram matrix}.

\item Let $\zeta_{i_0}$ be one of the monomials in $W_k$ of maximum degree.  Then, by hypothesis, the $i_0$-th column and $i_0$-th row is nonzero.  Therefore, by Lemma \ref{Claim_about impact of zero diagonal entries for a positive semidefinite matrix}, the $(i_0,i_0)$-th entry of $G$ is positive, say $a_{i_0}$.  Then, $f$ contains the term $a_{i_0}\zeta_{i_0}^{\ast}\zeta_{i_0}$ of degree $2\deg(\zeta_{i_0})$.  So, $\deg(f)\geq 2\deg(\zeta_{i_0})$.  The inequality $\deg(f)\leq 2\deg(\zeta_{i_0})$ is obvious as $W_k$ contains no monomial of degree bigger than $\deg(\zeta_{i_0})$.  Hence, the assertion follows.
\end{enumerate}
\end{proof}
\begin{corollary}\label{corollary_writing a symmetric polynomial without any constraint on the length of the monomial vector}
Let $G$ be any $k\times k$ symmetric matrix and $W_k$ be any column vector of size $k$ with $k$ many distinct monomial entries (not necessarily in lexicographical order).  Then the polynomial $W_k^{\ast}\;G\;W_k$ is a symmetric polynomial.
\end{corollary}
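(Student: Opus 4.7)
The plan is to reproduce the argument of Proposition \ref{proposition_order of monomials and size of monomial vector do not matter}(1) almost verbatim, weakening ``positive semidefinite'' to ``symmetric'' and invoking Theorem \ref{Lemma_general form and gram matrix of a symmetric polynomial} in place of Theorem \ref{Theorem_SOHS iff psd Gram matrix}. The representation side of Theorem \ref{Lemma_general form and gram matrix of a symmetric polynomial} shows, in effect, that any symmetric matrix together with the lexicographically ordered monomial vector of the appropriate size produces a symmetric polynomial; so the task is to reduce the general $(G,W_k)$ pair to that standardized setting.

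First, I would let $d$ be the maximum degree appearing among the constituent monomials of $W_k$, adjoin the remaining $s(d,n)-k$ monomials of degree at most $d$, and reorder everything lexicographically; this reordering defines a permutation $\sigma \in S_{s(d,n)}$. Next, I would form the block matrix
\[
G' \;=\; \left(\begin{array}{@{}c|c@{}} G & \bigzero \\ \hline \bigzero & \bigzero \end{array}\right)
\]
of size $s(d,n)\times s(d,n)$, which is symmetric because $G$ is, and let $(G')^{\sigma}$ denote the matrix obtained by applying $\sigma$ simultaneously to the rows and columns of $G'$. The symmetric analogue of Lemma \ref{psdness is prserved by same set of row and column interchange} (which holds by the one-line calculation $(PAP^{t})^{t}=PA^{t}P^{t}$) shows that $(G')^{\sigma}$ remains symmetric, and by construction $W_k^{\ast}\, G\, W_k = W_{s(d,n)}^{\ast}\, (G')^{\sigma}\, W_{s(d,n)}$. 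Reading the Gram-matrix construction in Theorem \ref{Lemma_general form and gram matrix of a symmetric polynomial} backward, the right-hand side now has the shape \eqref{equation_general form of a symmetric polynomial} and is manifestly symmetric, finishing the argument.

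A shorter alternative I might present instead is a direct index calculation: writing $f := W_k^{\ast}\, G\, W_k = \sum_{i,j} g_{ij}\, \zeta_i^{\ast} \zeta_j$ and applying the $\ast$-operation termwise yields $f^{\ast} = \sum_{i,j} g_{ij}\, \zeta_j^{\ast} \zeta_i$, and swapping the summation indices together with $g_{ji}=g_{ij}$ gives $f^{\ast}=f$. This bypasses all permutation bookkeeping and does not invoke Theorem \ref{Lemma_general form and gram matrix of a symmetric polynomial} at all; it may actually be the cleanest way to present the corollary, though it departs from the template set by Proposition \ref{proposition_order of monomials and size of monomial vector do not matter}.

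I do not anticipate any real obstacle. The only minor care-point is justifying that symmetry is preserved under simultaneous row-column permutations (in the embedding approach) or that $(\zeta_i^{\ast}\zeta_j)^{\ast} = \zeta_j^{\ast}\zeta_i$ distributes correctly over the double sum (in the direct approach); both are one-line verifications, and everything else is bookkeeping already handled in the proof of Proposition \ref{proposition_order of monomials and size of monomial vector do not matter}.
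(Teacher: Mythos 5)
Your main argument is exactly the paper's proof: the paper simply says the corollary follows along the same lines as part (1) of Proposition \ref{proposition_order of monomials and size of monomial vector do not matter}, i.e.\ the zero-padding and simultaneous row-column permutation argument with ``symmetric'' replacing ``positive semidefinite'' and Theorem \ref{Lemma_general form and gram matrix of a symmetric polynomial} replacing Theorem \ref{Theorem_SOHS iff psd Gram matrix}. Your alternative direct computation $f^{\ast}=\sum_{i,j}g_{ij}\zeta_j^{\ast}\zeta_i=f$ using $g_{ji}=g_{ij}$ is also correct and arguably cleaner, but the embedding route you lead with is precisely what the paper intends.
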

\begin{proof}
Proof goes along the same line of the proof of the first part of the assertion of Proposition \ref{proposition_order of monomials and size of monomial vector do not matter}.
\end{proof}
\begin{remark}\label{remark_immediately related to proposition_order of monomials and size of monomial vector do not matter}
\begin{enumerate}
\item For a positive definite matrix $G$, every diagonal entry, being a principal minor of $G$, is positive.  Therefore, no column or row of $G$ is zero by Lemma \ref{Claim_about impact of zero diagonal entries for a positive semidefinite matrix}.  So, the degree of the SOHS $W_k^{\ast}\;G\;W_k$ is twice the degree of any of the highest degree monomials in $W_k$.
\item Assertion $(2)$ of Proposition \ref{proposition_order of monomials and size of monomial vector do not matter} doesn't hold for a symmetric matrix in general.  Let us take 
\begin{equation*}
W_4:=\begin{pmatrix}
1 & X_1 & X_2 & X_2^2X_3^2
\end{pmatrix}^{t}\;,
\end{equation*}
and 
\begin{equation*}
G:=\begin{pmatrix}
5 & 0 & 0 & 0\\
0 & 1 & 0 & 3\\
0 & 0 & 1 & 0\\
0 & 3 & 0 & 0
\end{pmatrix}\;.
\end{equation*} 
Then the polynomial $f=W_4^{\ast}\; G\; W_4=X_1^2+X_2^2+3X_1X_2^2X_3^2+3X_3^2X_2^2X_1+5$ is a symmetric polynomial of degree $5$ and the highest degree monomial $X_2^2X_3^2$ has degree $4$.
\end{enumerate}
\end{remark}
Let $W_k$ be a monomial vector with degree of its monomial entries being at most $d$ and $G$ be a $k\times k$ symmetric matrix.  Then, as observed in $(2)$ of Remark \ref{remark_immediately related to proposition_order of monomials and size of monomial vector do not matter}, the symmetric polynomial $W_k^{\ast}\;G\;W_k$ might have degree much lesser than $2d$, unlike the positive semidefinite case.  But it is not so under some mild conditions on the symmetric matrix involved.
\begin{proposition}\label{proposition_for positive diagonal symmetric matrix monomials of at most half degree is enough}
Let $G$ be a symmetric matrix with positive diagonal entries and $W_k$ be a column vector of distinct monomials from $\mathbb{R}\langle\underline{X}\rangle$.  Then the polynomial $f:=W_k^{\ast}\;G\;W_k$ is a symmetric polynomial of degree $2d$, $d$ being the degree of any of the highest degree monomial entries of $W_k$.
\end{proposition}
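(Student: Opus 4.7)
The symmetry of $f=W_k^{\ast}\,G\,W_k$ is immediate from Corollary \ref{corollary_writing a symmetric polynomial without any constraint on the length of the monomial vector}, so the plan is to pin down the degree. Expanding gives $f=\sum_{i,j} G_{ij}\,\zeta_i^{\ast}\zeta_j$, so each term has degree $\deg(\zeta_i)+\deg(\zeta_j)\le 2d$, yielding at once the upper bound $\deg(f)\le 2d$. The real content is the lower bound.

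For the lower bound, I would pick an index $i_0$ with $\deg(\zeta_{i_0})=d$, so that the diagonal contribution is $G_{i_0 i_0}\,\zeta_{i_0}^{\ast}\zeta_{i_0}$, a polynomial of degree $2d$ whose coefficient is positive by hypothesis. The step that needs justification is that this degree-$2d$ monomial is not cancelled by any other term in the double sum. Here I would use uniqueness of factorisation in the free monoid $\langle \underline{X}\rangle$: if $\zeta_i^{\ast}\zeta_j = \zeta_{i_0}^{\ast}\zeta_{i_0}$ as words of degree $2d$, then since $\deg(\zeta_i),\deg(\zeta_j)\le d$ and their sum must be $2d$, both must equal $d$; and since a word of length $2d$ admits a unique splitting into two words of lengths $d$ and $d$, one forces $\zeta_i^{\ast}=\zeta_{i_0}^{\ast}$ and $\zeta_j=\zeta_{i_0}$, hence $\zeta_i=\zeta_j=\zeta_{i_0}$. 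By the distinctness of the monomials in $W_k$ this gives $i=j=i_0$, so the coefficient of $\zeta_{i_0}^{\ast}\zeta_{i_0}$ in $f$ is exactly $G_{i_0 i_0}>0$.

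Combining both bounds yields $\deg(f)=2d$, and the symmetry already noted completes the proof. The only delicate point, and the one I expect to have to write carefully, is the uniqueness-of-factorisation argument ruling out cancellation of the top-degree diagonal contribution; the positivity of the diagonal entries of $G$ is exactly what prevents such a cancellation from being forced through a non-trivial linear relation among the diagonal terms themselves. This mirrors, and slightly strengthens, the argument used in assertion $(2)$ of Proposition \ref{proposition_order of monomials and size of monomial vector do not matter}, where positive semidefiniteness plus Lemma \ref{Claim_about impact of zero diagonal entries for a positive semidefinite matrix} supplied the positivity of the relevant diagonal entry for free; here we simply assume it.
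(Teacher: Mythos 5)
Your proposal is correct and follows essentially the same route as the paper: symmetry via Corollary \ref{corollary_writing a symmetric polynomial without any constraint on the length of the monomial vector}, the lower bound $\deg(f)\geq 2d$ from the positive diagonal entry attached to a highest-degree monomial of $W_k$, and the obvious upper bound. The only difference is that you spell out, via unique factorisation of words in the free monoid and distinctness of the entries of $W_k$, why the top-degree diagonal term cannot be cancelled — a point the paper's proof leaves implicit — so your write-up is a slightly more careful version of the same argument.
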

\begin{proof}
Firstly, such an $f$ is a symmetric polynomial by Corollary \ref{corollary_writing a symmetric polynomial without any constraint on the length of the monomial vector}.  Now let $\zeta_{i}$ be a monomial entry of $W_k$ having highest degree, say $d$, among all the entries of $W_k$ for some $1\leq i \leq k$.  Let $a_i(>0)$ be the $(i,i)$-th entry of $G$.  Then $a_i\zeta^{\ast}\zeta$ is a term of $f$ and hence $\deg(f)\geq 2d$.  The other inequality is obvious as no monomial entry of $W_k$ has degree more than $d$.
\end{proof}
We now record the upshot of the findings of this subsection through a couple of theorems. 
\begin{theorem}\label{Theorem_general form of a SOHS polynomial}
\begin{enumerate}
\item Let $G$ be a $k\times k$ positive semidefinite matrix with no row (or column) being zero and $W_k$ be a column vector of size $k$ with $k$ many distinct monomial entries of degree at most $d$.  Then the polynomial $f=W_k^{\ast}\;G\;W_k$ is a SOHS polynomial of degree $2d$ with a representation $\mathbf{R}_f$ given as follows :
\begin{equation}\label{equation_ most general form of a representation of a SOHS polynomial}
\mathbf{R}_f\; :\; f=\sum_{i=1}^{k}a_i\zeta_i^{\ast}\zeta_i+\sum_{1\leq i < j\leq k} b_{ij}(\zeta_i^{\ast}\zeta_j+\zeta_j^{\ast}\zeta_i),
\end{equation}
where
\begin{subequations}\label{conditions on the coefficients of general form of a representation of a SOHS polynomial}
\begin{equation}\label{subeqn : a}
a_i,b_{ij}\in \mathbb{R}, \zeta_i\in \mathbb{R}\langle \underline{X}\rangle,\; \deg(\zeta_i)\leq d \text{\;for\;all\;}1\leq i<j \leq k\;,
\end{equation}
\begin{equation}\label{subeqn : a.5}
    \zeta_i\neq \zeta_j \text{\;for\;all\;}1\leq i\neq j \leq k\;,
\end{equation}
\begin{equation}\label{subeqn : b}
a_i>0 \text{\;for\;all\;}1\leq i \leq k, \text{\;and\;},
\end{equation}
\begin{equation}\label{subeqn : c}
\begin{split}
& \text{for\;any\;} K=\{i_0,i_1,\ldots, i_{\mid K \mid}\}\subset \{1,2,\ldots, k\} \text{\;with\;} i_0<i_1<\cdots <i_{\mid K \mid}\\
&  \text{the\;matrix\;}
\begin{pmatrix}
 a_{i_1} & b_{i_1i_2} & \cdots & b_{i_1i_{\mid K \mid}}\\
 b_{i_1i_2} & a_{i_2} & \cdots  & b_{i_2i_{\mid K \mid}}\\
  \vdots & \vdots & \ddots & \vdots\\
  b_{i_1i_{\mid K \mid}} & b_{i_2i_{\mid K \mid}}  & \cdots & a_{i_{\mid K \mid}}
\end{pmatrix}
\text{is\; positive\;semidefinite\;.}
\end{split}
\end{equation}
\end{subequations}
\item Conversely, let $f$ be a SOHS polynomial of degree $2d$ with a representation $\mathbf{R}_f$, as in \eqref{equation_ most general form of a representation of a SOHS polynomial}, satisfying the conditions as given in \eqref{conditions on the coefficients of general form of a representation of a SOHS polynomial}.  Then there exists a $k\times k$ positive semidefinite matrix $G$ with no row (or column) being zero and a column vector $W_k$ of size $k$ with distinct monomial entries of degree at most $d$ such that  $f=W_k^{\ast}\;G\;W_k$. 
\end{enumerate}
\end{theorem}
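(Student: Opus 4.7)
The plan is to handle the two implications separately, reducing each to the linear-algebraic results already established earlier in the section.

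For direction $(1)$, expanding $W_k^{\ast} G W_k$ using the symmetry of $G$ automatically groups the product into diagonal and off-diagonal pieces and yields
\[
W_k^{\ast} G W_k \;=\; \sum_{i=1}^k a_i\, \zeta_i^{\ast}\zeta_i \;+\; \sum_{1 \leq i < j \leq k} b_{ij}\bigl(\zeta_i^{\ast}\zeta_j + \zeta_j^{\ast}\zeta_i\bigr),
\]
where $a_i$ and $b_{ij}$ are the diagonal and off-diagonal entries of $G$. That this polynomial is SOHS of degree exactly $2d$ is precisely what Proposition \ref{proposition_order of monomials and size of monomial vector do not matter} delivers, using the no-zero-row hypothesis for the degree statement. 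Conditions \eqref{subeqn : a} and \eqref{subeqn : a.5} are immediate from the hypotheses on $W_k$. For \eqref{subeqn : b}, I argue by contradiction: if $a_i \leq 0$ for some $i$, then since $a_i$ is a $1\times 1$ principal minor of the PSD matrix $G$ it must in fact be zero, and Lemma \ref{Claim_about impact of zero diagonal entries for a positive semidefinite matrix} then forces the $i$-th row and column of $G$ to vanish, contradicting the no-zero-row hypothesis. Finally, \eqref{subeqn : c} just restates the standard fact that principal submatrices of a PSD matrix are PSD.

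For direction $(2)$, given a representation $\mathbf{R}_f$ satisfying the four conditions, I construct the obvious symmetric matrix $G = (g_{ij})$ with $g_{ii} = a_i$ and $g_{ij} = g_{ji} = b_{ij}$ for $i < j$, and take $W_k$ to be the column vector whose $i$-th entry is $\zeta_i$. A direct expansion (the reverse of the calculation above) gives $f = W_k^{\ast} G W_k$, and the distinctness condition \eqref{subeqn : a.5} ensures there is no ambiguity in reading off the coefficients. It remains to check that $G$ is PSD with no zero row or column. The no-zero-row property follows at once from \eqref{subeqn : b}, which forces every diagonal entry to be strictly positive. The PSD-ness follows from \eqref{subeqn : c}, which asserts PSD-ness of every principal submatrix of $G$ (covering, in particular, the full matrix); alternatively, one invokes the generalized Sylvester criterion (Lemma \ref{Generalized Sylvester's criterion for psd matrix}) after observing that every principal minor of $G$ is the determinant of one of the matrices in \eqref{subeqn : c} and is therefore nonnegative.

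In short, the theorem is essentially a repackaging of Proposition \ref{proposition_order of monomials and size of monomial vector do not matter} in the language of coefficients rather than matrices, so there is no substantive obstacle. The one place requiring a little care is verifying \eqref{subeqn : b}: a priori one only knows $a_i \geq 0$ from the PSD hypothesis, and strict positivity genuinely uses the no-zero-row assumption through Lemma \ref{Claim_about impact of zero diagonal entries for a positive semidefinite matrix}. Everything else is routine matching between the Gram-style bilinear expansion $W_k^{\ast} G W_k$ and the representation $\mathbf{R}_f$.
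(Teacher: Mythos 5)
Your proposal is correct and follows essentially the same route as the paper: both directions reduce to Proposition \ref{proposition_order of monomials and size of monomial vector do not matter}, with the strict positivity of the $a_i$ handled via Lemma \ref{Claim_about impact of zero diagonal entries for a positive semidefinite matrix} and positive semidefiniteness of the assembled matrix via Lemma \ref{Generalized Sylvester's criterion for psd matrix}. You are in fact a bit more explicit than the paper's compressed argument, but there is no substantive difference.
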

\begin{proof}
The condition as in \eqref{subeqn : b} ensures that no column or row are full zero, that is, 
\begin{equation*}
\sum_{i\neq j, i=1}^k b_{ij}^2+a_j^2\neq 0 \text{\;for\;all\;} j\;,\text{and\;},\&\;\sum_{j\neq i, j=1}^k b_{ij}^2+a_i^2\neq 0 \text{\;for\;all\;}i\;,
\end{equation*}
by Proposition \ref{proposition_for positive diagonal symmetric matrix monomials of at most half degree is enough} and Lemma \ref{Claim_about impact of zero diagonal entries for a positive semidefinite matrix}.  Moreover, the condition as in \eqref{subeqn : c} ensures that the matrix whose $(i,j)$-th entries are $b_{ij}$ for all $i\neq j$, and $(i,i)$-th entries are $a_i$ for all $i$, is positive semidefinite by Lemma \ref{Generalized Sylvester's criterion for psd matrix}.  Now it follows from Proposition \ref{proposition_order of monomials and size of monomial vector do not matter}.
\end{proof}
\begin{theorem}\label{Theorem_general form of a symmetric polynomial of degree 2d where the co-efficients of the square terms are positive}
\begin{enumerate}
\item Let $G$ be a $k\times k$ symmetric matrix with positive diagonal entries and $W_k$ be a column vector of size $k$ with distinct monomial entries of degree at most $d$.  Then the polynomial $f=W_k^{\ast}\;G\;W_k$ is a symmetric polynomial of degree $2d$ with a representation $\mathbf{R}_f$ given as follows :
\begin{equation}\label{equation_ most general form of a representation of a symmetric polynomial with positive co-efficients of square terms}
\mathbf{R}_f\; :\; f=\sum_{i=1}^{k}a_i\zeta_i^{\ast}\zeta_i+\sum_{1\leq i < j\leq k} b_{ij}(\zeta_i^{\ast}\zeta_j+\zeta_j^{\ast}\zeta_i),
\end{equation}
where
\begin{subequations}\label{conditions on the coefficients of general form of a representation of a symmetric polynomial with positive co-efficients of square terms}
\begin{equation}\label{subeqn : a'}
a_i,b_{ij}\in \mathbb{R}, \zeta_i\in \mathbb{R}\langle \underline{X}\rangle,\;\zeta_i\neq \zeta_j,\; \deg(\zeta_i)\leq d \text{\;for\;all\;}1\leq i<j \leq k\;,
\end{equation}
\begin{equation}\label{subeqn : b'}
a_i>0 \text{\;for\;all\;}1\leq i \leq k\;,
\end{equation}
\end{subequations}
\item Conversely, let $f$ be a symmetric polynomial of degree $2d$ with a representation $\mathbf{R}_f$, as in \eqref{equation_ most general form of a representation of a symmetric polynomial with positive co-efficients of square terms}, satisfying the conditions as given in \eqref{conditions on the coefficients of general form of a representation of a symmetric polynomial with positive co-efficients of square terms}.  Then there exists a $k\times k$ symmetric matrix $G$ with positive diagonal entries and a column vector $W_k$ of size $k$ with distinct monomial entries of degree at most $d$ such that  $f=W_k^{\ast}\;G\;W_k$. 
\end{enumerate}
\end{theorem}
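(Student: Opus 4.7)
The plan is to treat this result as the symmetric-matrix analogue of Theorem \ref{Theorem_general form of a SOHS polynomial}, with the positive semidefiniteness hypothesis relaxed to merely positive diagonal entries. Both directions should follow almost immediately from the structural results already established in Corollary \ref{corollary_writing a symmetric polynomial without any constraint on the length of the monomial vector} and Proposition \ref{proposition_for positive diagonal symmetric matrix monomials of at most half degree is enough}; the theorem is essentially a repackaging of those facts into the explicit coefficient form demanded by representation \eqref{equation_ most general form of a representation of a symmetric polynomial with positive co-efficients of square terms}.

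For part (1), I would start by directly expanding $W_k^{\ast} G W_k$. Writing $W_k = (\zeta_1, \ldots, \zeta_k)^{t}$ and $G = (g_{ij})_{1 \leq i,j \leq k}$, the product expands as $\sum_{i,j} g_{ij} \zeta_i^{\ast} \zeta_j$. Using the symmetry $g_{ij} = g_{ji}$, I would set $a_i := g_{ii}$ and $b_{ij} := g_{ij}$ for $i < j$, then group diagonal and off-diagonal contributions so as to obtain exactly the representation in \eqref{equation_ most general form of a representation of a symmetric polynomial with positive co-efficients of square terms}. The condition $a_i > 0$ in \eqref{subeqn : b'} is immediate from the hypothesis that $G$ has positive diagonal entries. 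That $f$ is symmetric of degree $2d$ is then exactly the statement of Proposition \ref{proposition_for positive diagonal symmetric matrix monomials of at most half degree is enough}, while the symmetry alone was already recorded in Corollary \ref{corollary_writing a symmetric polynomial without any constraint on the length of the monomial vector}.

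For part (2), I would reverse the construction. Given a representation of $f$ satisfying \eqref{subeqn : a'} and \eqref{subeqn : b'}, I would define
\begin{equation*}
W_k := \begin{pmatrix} \zeta_1 & \zeta_2 & \cdots & \zeta_k \end{pmatrix}^{t}, \qquad
G := \begin{pmatrix}
a_1 & b_{12} & \cdots & b_{1k}\\
b_{12} & a_2 & \cdots & b_{2k}\\
\vdots & \vdots & \ddots & \vdots\\
b_{1k} & b_{2k} & \cdots & a_k
\end{pmatrix}.
\end{equation*}
By construction $G$ is symmetric; the distinctness condition in \eqref{subeqn : a'} means $W_k$ has distinct monomial entries of degree at most $d$; and \eqref{subeqn : b'} gives the positive diagonals. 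A routine expansion shows $W_k^{\ast} G W_k$ reproduces the right-hand side of \eqref{equation_ most general form of a representation of a symmetric polynomial with positive co-efficients of square terms}, so $f = W_k^{\ast} G W_k$ as required.

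I do not anticipate a genuine obstacle here: all of the technical work has already been done in Proposition \ref{proposition_for positive diagonal symmetric matrix monomials of at most half degree is enough} (which is what guarantees the degree is exactly $2d$ rather than strictly smaller, via the positivity of some diagonal entry $a_{i}$ with $\zeta_{i}$ of maximal degree) and in Corollary \ref{corollary_writing a symmetric polynomial without any constraint on the length of the monomial vector} (which supplies symmetry without ordering or length restrictions on the monomial vector). The only mild care is in checking that the indexing convention ($a_i$ on the diagonal, $b_{ij}$ above the diagonal with $b_{ji} = b_{ij}$ placed below) lines up with the representation, which is purely notational.
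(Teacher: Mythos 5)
Your proposal is correct and follows essentially the same route as the paper: the paper's proof likewise reduces everything to Proposition \ref{proposition_for positive diagonal symmetric matrix monomials of at most half degree is enough} (the positive diagonal entries ruling out a degree drop), with the passage between the matrix $G$ and the coefficients $a_i$, $b_{ij}$ being the same direct expansion and explicit construction you describe. Your write-up simply makes the bookkeeping more explicit than the paper's two-line argument.
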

\begin{proof}
The condition as in \eqref{subeqn : b'} ensures that no column or row are full zero.  The assertion now follows from Proposition \ref{proposition_for positive diagonal symmetric matrix monomials of at most half degree is enough}.
\end{proof}
\begin{definition}\label{Definition_ matrix_positive diagonal}
Let $f\in \mathbb{R}\langle \underline{X}\rangle$ be a symmetric polynomial of degree $2d$ in $n$ variables and $k$ be any positive integer satisfying $k\leq s(d,n)$.  If there exist a $k\times k$ symmetric matrix $G$ with positive diagonal entries such that $f$ can be written in the form $W_k^{\ast}\;G\;W_k$ for some column vector $W_k$ of monomials of degree at most $d$, then $G$ is called \textit{a Gram-like matrix of} $f$ (or \textit{the Gram-like matrix associated with $\mathbf{R}_f$, as in
\eqref{equation_ most general form of a representation of a symmetric polynomial with positive co-efficients of square terms}, along with conditions as in \eqref{conditions on the coefficients of general form of a representation of a symmetric polynomial with positive co-efficients of square terms}}).  
\end{definition}
\begin{definition}\label{Definition_Gram like matrix_psd}
Let $f\in \mathbb{R}\langle \underline{X}\rangle$ be a SOHS polynomial of degree $2d$ in $n$ variables and $k$ be any positive integer satisfying $k\leq s(d,n)$.  If there exists a $k\times k$ positive semidefinite matrix $G$ such that $f$ can be written in the form $W_k^{\ast}\;G\;W_k$ for some column vector $W_k$ of monomials of degree at most $d$, then $G$ is called \textit{a positive semidefinite Gram-like matrix of} $f$ (or \textit{the positive semidefinite Gram-like matrix associated with $\mathbf{R}_f$, as in
\eqref{equation_ most general form of a representation of a SOHS polynomial}, along with conditions as in \eqref{conditions on the coefficients of general form of a representation of a SOHS polynomial}}).  
\end{definition}
\begin{remark}\label{remark_comparing Gram matrix and Gram like matrix}
\begin{enumerate}
\item The difference between a Gram matrix and a Gram-like matrix of a polynomial can be seen through the following example.  A positive semidefinite Gram-like matrix of the polynomial $f=X_1^{2}+4X_1X_2+4X_2X_1+16X_2^2$, as in Example \ref{example_size of the monomials doesn't matter}, is $G^{\prime \prime}=\begin{pmatrix}
16 & 4\\
4 & 1
\end{pmatrix}
$, whereas each of its positive semidefinite  Gram matrix is of size $7$. 

\item It can be noted from the proof of Proposition \ref{proposition_order of monomials and size of monomial vector do not matter} that a positive semidefinite Gram-like matrix associated to a SOHS polynomial is equally important to its positive semidefinite Gram matrix as it encodes all the information of its Gram matrix.  Moreover, it is easier to handle a Gram-like matrix as it usually has much lesser size than corresponding Gram matrix. 

\item The motivation for working with a Gram-like matrix of a noncommutative polynomial $f$ instead of its Gram matrix is to avoid unnecessary monomials in the corresponding monomial vector, which is same as ignoring any full zero row (or column) from a Gram matrix.  The transition from a Gram matrix to a Gram-like matrix can be interpreted through the lens of graph theory as well.  To be specific, the sparsity pattern (cf. \cite[\S 8.1, p. 329-330]{VaAn} for more details) of a Gram-like matrix is always connected, whereas it is not so for a Gram matrix in general.   
\end{enumerate}
\end{remark}

\subsection{Gram-like matrix preserving SOHS extensions}
\label{subsec: Block Matrix Technique}
In this subsection, we answer Question \ref{Introduction_main question_possible modification} completely by exploiting the flexibility obtained through Theorem \ref{Theorem_general form of a SOHS polynomial} and Theorem \ref{Theorem_general form of a symmetric polynomial of degree 2d where the co-efficients of the square terms are positive}.  To do so, let us formally define the meaning of SOHS extension of a noncommutative polynomial.

\begin{definition}
Let $f\in \mathbb{R}\langle \underline{X}\rangle$ be a noncommutative polynomial of the form $f=\sum_{j=1}^m a_j \delta_j$, where $\delta_j$ are monomials in $\mathbb{R}\langle\underline{X}\rangle$, $a_j\in \mathbb{R}-\{0\}$ for all $j$.  A SOHS polynomial $\widetilde{f}\in \mathbb{R}\langle \underline{X}\rangle$ is called  \textit{a nontrivial SOHS extension of} $f$ if $\widetilde{f}-f=\sum_{i=1}^{n}b_i\alpha_i$, where $\alpha_i$ are monomials in $\mathbb{R}\langle\underline{X}\rangle$, $\alpha_i\neq \delta_j$, $b_i\in \mathbb{R}-\{0\}$ for all $i,j$.    
\end{definition}

From now on, by a SOHS extension we mean a nontrivial SOHS extension only, as defined above.  We now define a special type of SOHS extension, namely, a Gram-like matrix preserving extension.
\begin{definition}\label{definition_Gram-like matrix preserving extension}
    A SOHS extension $\widetilde{f}\in \mathbb{R}\langle \underline{X}\rangle$ of a noncommutative polynomial $f\in \mathbb{R}\langle \underline{X}\rangle$ is said to be \textit{a Gram-like matrix preserving extension of} $f$ \textit{w.r.t} $h$ if 
    \begin{itemize}
        \item  there exists a $l \times l$ positive semidefinite Gram-like matrix $G_{\widetilde{f}}$ of $\widetilde{f}$ that keeps a $k\times k$ positive semidefinite Gram-like matrix $G_h$ of a SOHS part $h$ of $f$ as a principal submatrix, for some positive integers $l>k$, and 
        \item there exists a monomial vector $W_l$ associated to $G_{\widetilde{f}}$ and a monomial vector $W_k$ associated to $G_h$ such that $W_l$ contains all the entries of $W_k$.
    \end{itemize} 
\end{definition}
\begin{remark}\label{remark_Gram-like matrix preserving extension}
    From now on, we simply say that $G_{\widetilde{f}}$ is a positive semidefinite Gram-like matrix of a Gram-like matrix preserving extension $\widetilde{f}$ of $f$ to mean that such
    $G_{\widetilde{f}}$ satisfies both the conditions as in Definition \ref{definition_Gram-like matrix preserving extension}.
\end{remark}
\begin{example}\label{Examples_polynomials that do not admit G-L MPE}
\begin{enumerate}
 \item There are noncommutative polynomials with a SOHS part $h$ that do not admit any Gram-like matrix preserving extension w.r.t $h$.  For example, consider the polynomial $f=X^2+X+5$ with $h=X^2+5$.  It is easy to check that only possible positive semidefinite Gram-like matrices of $h$ are $\begin{pmatrix}
1 & 0\\
0 & \sqrt{5}
\end{pmatrix}$ and $\begin{pmatrix}
\sqrt{5} & 0\\
0 & 1
\end{pmatrix}$.  The corresponding monomial vectors are $\begin{pmatrix}
X & 1
\end{pmatrix}^{t}$ and $\begin{pmatrix} 1 & X\end{pmatrix}^{t}$ respectively.  Then $f$ doesn't admit a Gram-like matrix preserving extension w.r.t $h$.  This is because only possible ways to express $X$ as a product of two monomials are $1.X$ and $X.1$.  That is, in positive semidefinite Gram-like matrix off diagonal terms are getting affected.  Using exactly similar argument, it is easy to observe that the polynomials $X_1^2+X_2^2+5+X_1X_2$, $X_1^2+X_2^2+7+X_1X_2X_1$ do not admit admit any Gram-like matrix preserving extension w.r.t $h:=X_1^2+X_2^2+5$.
\item By definition, it is obvious that any Gram-like matrix preserving extension w.r.t some SOHS part of a given noncommutative polynomial is a SOHS extension.  However, the converse is not true.  Consider the polynomial $f=X^2+X+5$ with $h=X^2+5$.  Then $\widetilde{f}=X^2+X+5+X^4=f+X^4$ is a SOHS extension of $f$.  But, as explained above, $f$ doesn't admit a Gram-like matrix preserving extension w.r.t $h$. 
\end{enumerate}
\end{example}


The following lemma talks about the general form of a positive semidefinite Gram-matrix of a Gram-like matrix preserving extension. 
\begin{lemma}\label{Lemma_form of Gram-like matrix matrix of Gram-like matrix preserving extension}
Let $f\in \mathbb{R}\langle\underline{X}\rangle$ be a noncommutative polynomial and $h$ a SOHS part of $f$.  Let $\widetilde{f}$ be a Gram-like matrix preserving extension of $f$ w.r.t $h$ and $G_h$ and $G_{\widetilde{f}}$ be as in Definition \ref{definition_Gram-like matrix preserving extension}.  Then, up to permutation, $G_{\widetilde{f}}$ must be of the form 
\begin{equation*}
G_{\widetilde{f}}=
\left(\begin{array}{@{}c|c@{}}
  \begin{matrix}
  G_h
  \end{matrix}
  & A \\
\hline 
  A^{t} & 
  \begin{matrix}
  B
  \end{matrix}
\end{array}\right)\;,    
\end{equation*}
where $G_{\widetilde{f}}/A(:=A^tB^-A)$ is positive semidefinite and $\mathcal{C}(A)\subseteq \mathcal{C}(G_h)$.
\end{lemma}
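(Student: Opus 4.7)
The plan is to deduce the block structure directly from the definition of a Gram-like matrix preserving extension, and then to read off the column-space containment and Schur-complement condition from the generalized Schur-complement criterion stated earlier in the excerpt.

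First, by Definition \ref{definition_Gram-like matrix preserving extension}, the $k\times k$ Gram-like matrix $G_h$ sits as a principal submatrix of the $l\times l$ positive semidefinite Gram-like matrix $G_{\widetilde{f}}$. Let $\{i_1<i_2<\cdots<i_k\}\subseteq\{1,2,\ldots,l\}$ be the row (equivalently column) indices of $G_{\widetilde{f}}$ corresponding to $G_h$. I would apply a sequence of simultaneous row-column transpositions $(i_j,j)$ for $j=1,\ldots,k$, which by Lemma \ref{psdness is prserved by same set of row and column interchange} preserves positive semidefiniteness at every step, and simultaneously reorders the entries of $W_l$ so that the first $k$ of them are precisely the entries of $W_k$ in the correct order. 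After this permutation $G_{\widetilde{f}}$ has $G_h$ as its leading $k\times k$ principal block, and so automatically has the form
\begin{equation*}
G_{\widetilde{f}}=\left(\begin{array}{@{}c|c@{}}G_h & A\\\hline A^t & B\end{array}\right)
\end{equation*}
for some $k\times(l-k)$ matrix $A$ and symmetric $(l-k)\times(l-k)$ matrix $B$.

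Next, since $G_{\widetilde{f}}$ is positive semidefinite by hypothesis, I would invoke Lemma \ref{Lemma_psdness in terms of leading principal minor, column space and Schur complement} applied to this block decomposition. That lemma yields exactly three necessary conditions: that $G_h$ be positive semidefinite (which holds by assumption, since $G_h$ is a positive semidefinite Gram-like matrix of the SOHS part $h$), that $\mathcal{C}(A)\subseteq\mathcal{C}(G_h)$, and that the Schur complement $G_{\widetilde{f}}/G_h:=B-A^t G_h^- A$ be positive semidefinite. These are precisely the conditions asserted in the statement (modulo the evident typographical convention for the Schur complement).

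The argument is almost entirely bookkeeping: the only substantive points are (i) that the permutation used to put $G_h$ in the upper-left corner does not disturb psd-ness, which is exactly Lemma \ref{psdness is prserved by same set of row and column interchange}, and (ii) that the characterization of psd block matrices via the column-space containment and Schur complement — Lemma \ref{Lemma_psdness in terms of leading principal minor, column space and Schur complement} — is invoked without needing $G_h$ to be invertible. The mildest obstacle I would anticipate is being careful that the permutation acts consistently on rows, columns, and the associated monomial vector $W_l$, so that both clauses of Definition \ref{definition_Gram-like matrix preserving extension} remain intact after reordering; but this is immediate since a simultaneous row-column swap on $G_{\widetilde{f}}$ corresponds precisely to swapping the corresponding two entries of $W_l$ in the quadratic form $W_l^\ast\,G_{\widetilde f}\,W_l$.
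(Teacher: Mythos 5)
Your proposal is correct and follows exactly the same route as the paper, whose proof is simply a citation of Definition \ref{definition_Gram-like matrix preserving extension}, Lemma \ref{psdness is prserved by same set of row and column interchange} (to permute $G_h$ into the leading block), and Lemma \ref{Lemma_psdness in terms of leading principal minor, column space and Schur complement} (to extract the column-space containment and Schur-complement conditions); you have merely spelled out the bookkeeping the paper leaves implicit, including the correct reading $B-A^tG_h^-A$ of the Schur complement where the paper's displayed formula is a typo.
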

\begin{proof}
Follows from Definition \ref{definition_Gram-like matrix preserving extension},  Lemma \ref{psdness is prserved by same set of row and column interchange} and Lemma \ref{Lemma_psdness in terms of leading principal minor, column space and Schur complement}.    
\end{proof}
 We now look for some necessary conditions and sufficient conditions for the existence of a Gram-like matrix preserving extension of a given noncommutative polynomial $f$, in terms of the monomials of $f$.  We set the stage for that.

The following lemma talks about how a given monomial can be decomposed through two smaller sized monomials.
\begin{lemma}\label{Lemma_decomposition of monomials using rc function }
    Any monomial $W\in \langle \underline{X}\rangle$ can be decomposed as a product of two monomials in $\deg(W)+1$ many ways. 
\end{lemma}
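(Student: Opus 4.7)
The plan is to observe that decomposing a monomial $W$ as a product of two monomials is essentially the same as choosing a splitting position between the letters of $W$ (including the two endpoints, which correspond to inserting the empty word $1$ on either side).

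First, I would write $W = W_1 W_2 \cdots W_n$ where $n = \deg(W)$ and each $W_j$ is a single variable, i.e., $\deg(W_j) = 1$. Then, for each $i \in \{0, 1, 2, \ldots, n\}$, I would define a decomposition $W = L_i \cdot R_i$ by setting $R_i = \rc(W, i)$ and letting $L_i$ be the complementary left part; explicitly, $L_0 = W$, $R_0 = 1$; $L_n = 1$, $R_n = W$; and $L_i = W_1 \cdots W_{n-i}$, $R_i = W_{n-i+1} \cdots W_n$ for $1 \leq i \leq n-1$. These give $n+1 = \deg(W)+1$ decompositions, and they are pairwise distinct because $\deg(R_i) = i$ takes a different value for each $i$.

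Conversely, for uniqueness I would argue that any decomposition $W = L \cdot R$ into monomials must satisfy $\deg(L) + \deg(R) = n$ by the additivity of degree under concatenation. Once $\deg(R)$ is fixed as some $i \in \{0, 1, \ldots, n\}$, the word $R$ must consist of the last $i$ letters of $W$ (equivalently $R = \rc(W, i)$) and $L$ must consist of the first $n - i$ letters, because concatenation in the free monoid $\langle \underline{X}\rangle$ is cancellative. Hence the decompositions are in bijection with the set $\{0, 1, \ldots, n\}$, yielding exactly $\deg(W)+1$ of them.

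The argument is essentially combinatorial and presents no genuine obstacle; the only thing to be careful about is handling the two boundary cases $i = 0$ and $i = n$ (which involve the empty word $1$) together with the generic case $1 \leq i \leq n-1$ so that the definition lines up with Definition \ref{definition_right chip function}.
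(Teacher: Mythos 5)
Your proposal is correct and follows essentially the same route as the paper: both enumerate the splittings of $W$ at each of the $\deg(W)+1$ positions, with the right factor given by $\rc(W,i)$ and the left factor being the complementary prefix (which the paper writes as $\bigl(\rc(W^{\ast},\deg(W)-i)\bigr)^{\ast}$). Your version is slightly more complete in that you also verify the decompositions are pairwise distinct and exhaust all factorizations via degree additivity and cancellativity in the free monoid, points the paper's one-line proof leaves implicit.
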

\begin{proof}
     Any monomial $ W \in \langle \underline{X}\rangle$ can be decomposed as following:
\begin{equation}\label{splitting a monomial using rc function and conjugate}
W = \big( \rc(W^*, \deg(W)-n) \big)^* \rc(W, n),   \end{equation}
for any integer $n$ satisfying $0\leq n \leq \deg(W)$ and therefore there are $\deg(W)+1$ many such ways. 
\end{proof}
\begin{definition}
    Given a monomial $W\in \langle \underline{X} \rangle$, any decomposition of $W$, as in  \eqref{splitting a monomial using rc function and conjugate}, is called a  \textit{rc deomposition of} $W$ (\textit{Right Chip decomposition of} $W$).
\end{definition}
The following lemma provides a way to produce a positive semidefinite matrix out of a partially specified symmetric matrix of a particular type.  This is very crucial in proving the existence of a Gram-like matrix preserving extension. 
\begin{lemma}\label{Lemma_generating a psd matrix by choosing the diagonals of the lower diagonal block}
 Let \begin{equation*}
G=
\left(\begin{array}{@{}c|c@{}}
  \begin{matrix}
  A
  \end{matrix}
  & B \\
\hline 
  B^{t} & 
  \begin{matrix}
  C
  \end{matrix}
\end{array}\right)\;   
\end{equation*} 
be a partially specified symmetric matrix, where $A$ and $C$ are square matrices of size $m$ and $n$ respectively and $A$ is a positive semidefinite matrix.  Let the diagonal elements of $C$ be the only unspecified entries of $G$.  Then if the column space of $B$ sits inside the column space of $A$, that is, $\mathcal{C}(B)\subseteq \mathcal{C}(A)$, then it is possible to fill the diagonal elements of $C$ with some positive real numbers so that the matrix $G$ becomes positive semidefinite.   
\end{lemma}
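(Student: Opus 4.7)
The plan is to reduce the problem to constructing a positive semidefinite Schur complement, and then to exploit the fact that we have complete freedom on the diagonal of $C$ to force strict diagonal dominance on the Schur complement.

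First I would invoke Lemma \ref{Lemma_psdness in terms of leading principal minor, column space and Schur complement}: because $A$ is positive semidefinite and $\mathcal{C}(B)\subseteq \mathcal{C}(A)$ by hypothesis, positive semidefiniteness of $G$ is equivalent to positive semidefiniteness of the Schur complement
\begin{equation*}
M := G/A = C - B^{t}A^{-}B.
\end{equation*}
Since $\mathcal{C}(B)\subseteq \mathcal{C}(A)$, the matrix $B^{t}A^{-}B$ is independent of the choice of the generalised inverse $A^{-}$, so $M$ is a well-defined symmetric $n\times n$ matrix. Crucially, the off-diagonal entries $M_{ij}=c_{ij}-(B^{t}A^{-}B)_{ij}$ (for $i\neq j$) are completely determined by the specified data of $G$, whereas each diagonal entry $M_{ii}=c_{ii}-(B^{t}A^{-}B)_{ii}$ can be tuned freely by our choice of $c_{ii}$.

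Next I would use a diagonal-dominance argument. For each $i\in\{1,\ldots,n\}$, set
\begin{equation*}
r_{i} := \sum_{j\neq i} |M_{ij}|,\qquad p_{i} := (B^{t}A^{-}B)_{ii},
\end{equation*}
both of which are fixed real numbers. Choose
\begin{equation*}
c_{ii} \;>\; \max\bigl\{\,0,\; p_{i}+r_{i}+1\,\bigr\}.
\end{equation*}
Then $c_{ii}>0$ as required by the statement, and simultaneously $M_{ii}=c_{ii}-p_{i}>r_{i}\geq 0$. Thus $M$ is a real symmetric matrix with strictly positive diagonal entries satisfying $M_{ii}>\sum_{j\neq i}|M_{ij}|$ for every $i$; by the standard strict-diagonal-dominance criterion (a direct consequence of the Gershgorin disc theorem, or alternatively of Lemma \ref{Sylvester's criterion for pd matrix}) this forces $M$ to be positive definite, hence positive semidefinite. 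Applying Lemma \ref{Lemma_psdness in terms of leading principal minor, column space and Schur complement} in the reverse direction, $G$ is positive semidefinite, finishing the proof.

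The only genuinely delicate point is recognising that we are free to decouple the diagonal and off-diagonal roles of $C$ in the Schur complement: since $\mathcal{C}(B)\subseteq\mathcal{C}(A)$ is already part of the hypothesis, all of the ``coupling'' between the blocks has been absorbed into the fixed quantity $B^{t}A^{-}B$, and the remaining freedom in the diagonal of $C$ acts as a pure shift on the diagonal of $M$. Once this is noticed, diagonal dominance immediately handles the rest; I expect no other real obstacle.
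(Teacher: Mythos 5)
Your proposal is correct and follows essentially the same route as the paper: reduce to the Schur complement $C-B^{t}A^{-}B$ via Lemma \ref{Lemma_psdness in terms of leading principal minor, column space and Schur complement} and then choose the free diagonal entries $c_{ii}$ large enough to make that Schur complement diagonally dominant. In fact your choice $c_{ii}>\max\{0,\,p_i+r_i+1\}$ is slightly more careful than the paper's (which compares $c_{ii}$ only against the off-diagonal sums and leaves the shift by $(B^{t}A^{-}B)_{ii}$ implicit), but the underlying argument is identical.
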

\begin{proof}
By Lemma \ref{Lemma_psdness in terms of leading principal minor, column space and Schur complement}, it is enough to choose diagonals $c_{ii}$ of $C$ such that 
\begin{itemize}
    \item $c_{ii}>0$ for all $1\leq i \leq n$,
    \item The matrix $G/A:=C-B^tA^-B$ is positive semidefinite.
\end{itemize}
We choose $c_{ii}>0$ satisfying $c_{ii} \geq \sum_{j \neq i} |(C- B^tA^-B)_{ij}|$, for all $i$.  Clearly, such choices make the matrix $G/A:=C-B^tA^-B$ diagonally dominant, and hence positive semidefinite.  Hence the assertion follows.
\end{proof}
\begin{remark}
    In Lemma \ref{Lemma_generating a psd matrix by choosing the diagonals of the lower diagonal block}, if the matrix $A$ is positive definite, then condition $\mathcal{C}(B)\subseteq \mathcal{C}(B)$ is automatically satisfied.
\end{remark}

We are now ready to provide some necessary and sufficient conditions for the existence of a Gram-like matrix preserving extension.
\begin{theorem}\label{Theorem_conditions satisfied my monomial vectors if if G-l MPE exists}
    Let $f=h+\sum_{j=1}^m d_j\delta_j\in \mathbb{R}\langle \underline{X}\rangle$ be a noncommutative polynomial with a given SOHS part $h$ and the monomials $\delta_j$s are not Hermitian squares.
    \begin{enumerate}
        \item Assume that there exists a Gram-like matrix preserving extension $\widetilde{f}$ of $f$ w.r.t $h$. Then, there exists a representation 
    \begin{equation*}
        \mathbf{R}_h\; :\; h=\sum_{i=1}^{k}a_i\zeta_i^{\ast}\zeta_i+\sum_{1\leq i < j\leq k} b_{ij}(\zeta_i^{\ast}\zeta_j+\zeta_j^{\ast}\zeta_i)\;,
    \end{equation*}
 and, for all  $1\leq j\leq m$, there exist rc decompositions  of $\delta_j=\beta_{n_j}^{\ast}\gamma_{n_j}$, 
 for some $0\leq n_j \leq \deg(\delta_j)$, 
 such that no $\beta_{n_j}$ and $\gamma_{n_j}$  simultaneously  lie in $\{\zeta_1,\ldots,\zeta_k\}$.
 \item Conversely, assume that there exists a representation 
    \begin{equation*}
        \mathbf{R}_h\; :\; h=\sum_{i=1}^{k}a_i\zeta_i^{\ast}\zeta_i+\sum_{1\leq i < j\leq k} b_{ij}(\zeta_i^{\ast}\zeta_j+\zeta_j^{\ast}\zeta_i)\;,
    \end{equation*}
 and, for all  $1\leq j\leq m$, there exist rc decompositions  of $\delta_j=\beta_{n_j}^{\ast}\gamma_{n_j}$, for some $0\leq n_j \leq \deg(\delta_j)$, such that no $\beta_{n_j}$ and $\gamma_{n_j}$  simultaneously  lie in $\{\zeta_1,\ldots,\zeta_k\}$.  Then, there exists a partially specified symmetric matrix $G$ of the form 
 \begin{equation*}
G=
\left(\begin{array}{@{}c|c@{}}
  \begin{matrix}
  G_h
  \end{matrix}
  & A \\
\hline 
  A^{t} & 
  \begin{matrix}
  B
  \end{matrix}
\end{array}\right)\;,   
\end{equation*}
where $G_h$ is the Gram-like matrix of $h$ w.r.t the representation $\mathbf{R}_h$.  Moreover, if $\mathcal{C}(A)\subseteq \mathcal{C}(G_h)$, then there exist infinitely many Gram-like matrix preserving extensions of $f$.
    \end{enumerate}
\end{theorem}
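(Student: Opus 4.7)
The plan is to exploit the block structure of any Gram-like matrix preserving extension supplied by Lemma \ref{Lemma_form of Gram-like matrix matrix of Gram-like matrix preserving extension} together with the observation that any monomial $\delta_j$ appearing in $\widetilde f$ but not in $h$ must arise in the expansion $W_l^\ast G_{\widetilde f} W_l$ from some pair of positions whose associated rc decomposition of $\delta_j$ uses at least one monomial outside $\{\zeta_1,\ldots,\zeta_k\}$. Both directions then reduce to careful coefficient bookkeeping, and part $(2)$ is closed using the psd-completion technique in Lemma \ref{Lemma_generating a psd matrix by choosing the diagonals of the lower diagonal block}.

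For the forward direction $(1)$, I would start with the given extension $\widetilde f$ and invoke Lemma \ref{Lemma_form of Gram-like matrix matrix of Gram-like matrix preserving extension} to write $G_{\widetilde f}$ as $\begin{pmatrix} G_h & A \\ A^t & B \end{pmatrix}$ relative to a monomial vector $W_l$ whose initial $k$ entries are the monomials $\zeta_1,\ldots,\zeta_k$ of $\mathbf R_h$; the top-left block directly supplies the required representation $\mathbf R_h$. For each $j$, the coefficient of $\delta_j$ in $\widetilde f = W_l^\ast G_{\widetilde f} W_l$ equals $\sum_{(p,q)\,:\,w_p^\ast w_q = \delta_j} G_{\widetilde f}(p,q) = d_j$. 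After subtracting the part indexed by pairs $(p,q)$ with $p,q \leq k$, which exactly recovers the coefficient of $\delta_j$ in $h$, the remaining contribution is nonzero, so some pair $(p,q)$ with $w_p^\ast w_q = \delta_j$ must satisfy $\{w_p,w_q\} \not\subseteq \{\zeta_1,\ldots,\zeta_k\}$. Taking $\beta_{n_j} = w_p$ and $\gamma_{n_j} = w_q$ produces the asserted rc decomposition.

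For the converse direction $(2)$, I would build the extended monomial vector $W_l$ by adjoining to $W_k$ those $\beta_{n_j}, \gamma_{n_j}$ (from the given rc decompositions) not already in $W_k$, collapsing duplicates across different $j$. I would then assemble the partial matrix $G = \begin{pmatrix} G_h & A \\ A^t & B \end{pmatrix}$ as follows: place $G_h$ in the top-left block; at each position $(p,q)$ with $w_p = \beta_{n_j}$, $w_q = \gamma_{n_j}$ (which must lie in $A$ or in the off-diagonal part of $B$ by the hypothesis on the rc decompositions), assign a value so that the coefficient of $\delta_j$ in $W_l^\ast G W_l$ equals $d_j$; set every other off-diagonal entry of $A$ and $B$ to zero; leave the diagonal entries of $B$ unspecified. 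This yields the partial matrix of the prescribed block form. Under the extra hypothesis $\mathcal C(A) \subseteq \mathcal C(G_h)$, Lemma \ref{Lemma_generating a psd matrix by choosing the diagonals of the lower diagonal block} produces infinitely many positive-real tuples of diagonals of $B$ making $G$ positive semidefinite, and each such $G$ gives a SOHS polynomial $\widetilde f = W_l^\ast G W_l$ that is, by construction, a Gram-like matrix preserving extension of $f$ with respect to $h$.

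The main obstacle I expect is the bookkeeping when assembling $A$ and the off-diagonal part of $B$ in part $(2)$: different indices $j$ may share common $\beta_{n_j}$ or $\gamma_{n_j}$, so multiple rc decompositions can target the same matrix entry, and some $\delta_i$ might also be expressible via a product of monomials coming from unrelated $\delta_j$'s; one must verify that the entries can still be assigned so that each $\delta_j$ receives coefficient exactly $d_j$ without creating unintended contributions that reproduce some $\delta_i$ elsewhere in $W_l^\ast G W_l$. Once this combinatorics is settled, the existence of infinitely many extensions is immediate from the freedom in the positive diagonal of $B$ guaranteed by Lemma \ref{Lemma_generating a psd matrix by choosing the diagonals of the lower diagonal block}.
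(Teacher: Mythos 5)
Your proposal is correct and follows essentially the same route as the paper: part $(1)$ via the block form supplied by Lemma \ref{Lemma_form of Gram-like matrix matrix of Gram-like matrix preserving extension} and locating where the coefficients $d_j$ must sit outside the $G_h$ block, and part $(2)$ by assembling the partial matrix with only the diagonal of $B$ unspecified and completing it through Lemma \ref{Lemma_generating a psd matrix by choosing the diagonals of the lower diagonal block}. The bookkeeping concern you flag (shared factors $\beta_{n_j},\gamma_{n_j}$ across different $j$, and the symmetric entry forcing $d_j\delta_j^{\ast}$ terms) is handled no more explicitly in the paper, which simply distributes the $d_j$ outside $G_h$ and invokes the hypothesis that the $\delta_j$ are not Hermitian squares to keep them off the diagonal of $B$.
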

\begin{proof}
\begin{enumerate}
    \item As $\widetilde{f}$ is a Gram-like matrix preserving extension w.r.t $h$, by Lemma \ref{Lemma_form of Gram-like matrix matrix of Gram-like matrix preserving extension} it must be of the form \begin{equation*}
G_{\widetilde{f}}=
\left(\begin{array}{@{}c|c@{}}
  \begin{matrix}
  G_h
  \end{matrix}
  & A \\
\hline 
  A^{t} & 
  \begin{matrix}
  B
  \end{matrix}
\end{array}\right)\;   
\end{equation*}
up to a permutation, where $G_h$ is a positive semidefinite Gram-like matrix of $h$.  Let $G_h$ be the Gram-like matrix associated with 
\begin{equation*}
        \mathbf{R}_h\; :\; h=\sum_{i=1}^{k}a_i\zeta_i^{\ast}\zeta_i+\sum_{1\leq i < j\leq k} b_{ij}(\zeta_i^{\ast}\zeta_j+\zeta_j^{\ast}\zeta_i)\;.
    \end{equation*}
\begin{enumerate}
    \item[Case 1]  All the coefficients $d_j$ of the monomials $\delta_j$ of $f-h$ are distributed in $B$ : \\
    In this case, there exist some $0\leq n_j \leq \deg(\delta_j)$, such that the decompositions $$\delta_j=(\rc(\delta_j^{\ast},\deg(\delta_j)-n_j))^{\ast}\rc(\delta_j,n_j)$$ satisfies \begin{equation*}\label{new monomials are totally different}
    \rc(\delta_j^{\ast},\deg(\delta_j)-n_j)\neq \zeta_i \neq \rc(\delta_j,n_j),
    \end{equation*}
    for all $1\leq i \leq k$ and $1\leq j\leq m$.
    \item[Case 2] Not all the coefficients $d_j$ of the monomials $\delta_j$ of $f-h$ are distributed in $B$ :\\
    In this case, some $d_j$ must be distributed in $A$ or $A^t$ or both.  Therefore, 
    \begin{enumerate}
        \item either there exist decompositions $\delta_j=(\rc(\delta_j^{\ast},\deg(\delta_j)-n_j))^{\ast}\rc(\delta_j,n_j)$, for some $0\leq n_j \leq \deg(\delta_j)$, such that 
    $$\beta_{n_j}(:=\rc(\delta_j^{\ast},\deg(\delta_j)-n_j))= \zeta_i$$
    for some $i \in K_1\subseteq\{1,\ldots, k\}$ and $j \in M_1\subseteq\{1,\ldots, m\}$.  Then, we must have 
                 \begin{equation*}
                   \rc(\delta_j,n_j) \neq \zeta_i\text{\;for\;all\;}i\in K_1\text{\;and\;}j\in M_1\;,
                 \end{equation*}
    as otherwise these $d_j$s would have been distributed inside $G_h$, which is not possible as $\widetilde{f}$ is a Gram-like matrix preserving extension of $f$ w.r.t $h$. 
    \item Exactly similar to (i).
    \end{enumerate}
\end{enumerate}
\item Reversing the implications of the proof the first part of the assertion, it is easy to see that if no $\beta_{n_j}$ and $\gamma_{n_j}$  simultaneously  lie in $\{\zeta_1,\ldots,\zeta_k\}$, then there exists a partially specified symmetric matrix $G$ of the form 
 \begin{equation*}
G=
\left(\begin{array}{@{}c|c@{}}
  \begin{matrix}
  G_h
  \end{matrix}
  & A \\
\hline 
  A^{t} & 
  \begin{matrix}
  B
  \end{matrix}
\end{array}\right)\;,   
\end{equation*}
where $G_h$ is the Gram-like matrix of $h$ w.r.t the representation $\mathbf{R}_h$, such that the coefficient $d_j$s of $\delta_j$s and $\delta_j^{\ast}$s are distributed in $G$ outside the block $G_h$.  Here we are taking $W_l$ as a associated monomial vector to $G$ obtained by taking $\zeta_i$s as the first $k$ constituent monomials and taking $\beta_{n_j}$s and $\gamma_{n_j}$s (without any repetition) as the remaining monomials.  Also $d_j$s can't be distributed in the diagonals of $B$, as $\delta_j$s are not Hermitian squares.

Therefore we now have a partially specified symmetric matrix $G=\left(\begin{array}{@{}c|c@{}}
  \begin{matrix}
  G_h
  \end{matrix}
  & A \\
\hline 
  A^{t} & 
  \begin{matrix}
  B
  \end{matrix}
\end{array}\right)$ such that $G_h$ and $B$ are square matrices and the only unspecified entries of $G$ are the diagonal entries of $B$.  Now, if $\mathcal{C}(A)\subseteq \mathcal{C}(G_h)$, then by Lemma \ref{Lemma_generating a psd matrix by choosing the diagonals of the lower diagonal block}, we can choose diagonal entries of $B$ in such a way that makes $G$ a positive semidefinite matrix.  Therefore, the SOHS $\widetilde{f}:=W_l^{\ast}\;G\;W_l$ is the required Gram-like preserving extension of $f$.

Moreover, it is clear from Lemma \ref{Lemma_generating a psd matrix by choosing the diagonals of the lower diagonal block} that there are infinitely many choices for the diagonal entries of $B$ that makes $G$ positive semidefinite.  Therefore, we obtain infinitely many such Gram-like preserving extension $\widetilde{f}$ of $f$.

 \end{enumerate}
\end{proof}

\begin{remark}\label{Remark_column space condition is nonredundant for semidefinite case}
\begin{enumerate}
\item In Theorem \ref{Theorem_conditions satisfied my monomial vectors if if G-l MPE exists}, the hypothesis that the monomials $\delta_j$s of $f-h$ are non Hermitian is used to prove the converse part only.
\item Whenever the matrix $G_h$ of $h$, as in Theorem \ref{Theorem_conditions satisfied my monomial vectors if if G-l MPE exists}, is positive definite, then the necessary condition itself becomes sufficient as the condition $\mathcal{C}(A)\subseteq \mathcal{C}(G_h)$ is satisfied by default.
\item In the converse part of Theorem \ref{Theorem_conditions satisfied my monomial vectors if if G-l MPE exists}, the condition $\mathcal{C}(A)\subseteq \mathcal{C}(G_h)$ is not redundant when $G_h$ is not positive definite.  To see this, consider the noncommutative polynomial $f=X_1^2+X_2^2+X_1X_2+X_2X_1+2X_1-2Y_2$ and take $h=X_1^2+X_2^2+X_1X_2+X_2X_1$.  It is easy to check that $h$ has a unique positive semidefinite Gram-like matrix $G_h$, namely, $G_h=\begin{pmatrix}
1 & 1\\
1 & 1
\end{pmatrix}$.  The corresponding monomial vectors could be either $\begin{pmatrix}
X_1 & X_2
\end{pmatrix}^{t}$ or $\begin{pmatrix} X_2 & X_1\end{pmatrix}^{t}$.  Also, the only possible rc decompositions of $X_1$ and $X_2$ are $1.X_1$, $X_1.1$, $1.X_2$ and $X_2.1$.    

Assume that there exists a Gram-like matrix preserving extension $\widetilde{f}$ of $f$ w.r.t $h$.  Because of the possible rc decompositions of $X_1$ and $X_2$, as mentioned earlier, $\widetilde{f}$ must have a constant term, say $d$.  Then for any such $\widetilde{f}$, there should be some positive semidefinite Gram-like matrix $G_{\widetilde{f}}$ such that it contains either $\begin{pmatrix}
1 & 1 & 1\\
1 & 1 & -1\\
1 & -1 & d
\end{pmatrix}$ or $\begin{pmatrix}
1 & 1 & -1\\
1 & 1 & 1\\
-1 & 1 & d
\end{pmatrix}$, corresponding to monomial vector $\begin{pmatrix}
X_1 & X_2 & 1
\end{pmatrix}^{t}$ or $\begin{pmatrix}
X_2 & X_1 & 1
\end{pmatrix}^{t}$, as a principal minor.  But any of these principal minors are having determinant $-4 (<0)$. 
 Therefore, by Lemma \ref{Generalized Sylvester's criterion for psd matrix}, such $G_{\widetilde{f}}$, and hence such $\widetilde{f}$, can't exist.
\end{enumerate}
\end{remark}
\begin{example}\label{Example_Block G-l MPE doesn't always exist}
    Consider the polynomial $f=X_1^2+1+X_1X_2$.  We take $h=X_1^2+1$.  Then, $\widetilde{f}=f+X_2X_1+X_2^2$ is a Gram-like matrix preserving extension of $f$ w.r.t $h$ as $G_{\widetilde{f}}$ given as $G_{\widetilde{f}}=\begin{pmatrix}
        1 & 0 & 0 \\
        0 & 1 & 1\\
        0 & 1 & 1
    \end{pmatrix}$ (alongwith $W_3=\begin{pmatrix}
        1 & X_1 & X_2
    \end{pmatrix}^t$ as the associated monomial vector) preserves $G_h=\begin{pmatrix}
        1 & 0\\
        0 & 1
    \end{pmatrix}$ (alongwith $W_2=\begin{pmatrix}
        1 & X_1
    \end{pmatrix}^t$ as the associated monomial vector) as a principal submatrix.  It is easy to see that $G_h$ is unique up to permutation.  Also, possible rc decompositions of $X_1X_2$ are $1. X_1X_2$, $X_1.X_2 $ and $X_1X_2.1$.  Therefore, there does not exist any $G_{\widetilde{f}}$ of the form $G_{\widetilde{f}}=
\left(\begin{array}{@{}c|c@{}}
  \begin{matrix}
  G_{h}
  \end{matrix}
  & \bigzero \\
\hline 
  \bigzero & 
  \begin{matrix}
  G_{\widetilde{f}-h}
  \end{matrix}
\end{array}\right)$.
\end{example}
Now we want to look that under what circumstances, a Gram-like matrix preserving extension $\widetilde{f}$ of $f=h+\sum_{j=1}^m d_j\delta_j$ admits a Gram-like matrix $G_{\widetilde{f}}$ of the form 
$G_{\widetilde{f}}=
\left(\begin{array}{@{}c|c@{}}
  \begin{matrix}
  G_{h}
  \end{matrix}
  & \bigzero \\
\hline 
  \bigzero & 
  \begin{matrix}
  G_{\widetilde{f}-h}
  \end{matrix}
\end{array}\right)$.  Here we omit the additional assumption that the monomials $\delta_j$s are not Hermitian squares.   

\begin{theorem}\label{Theorem_NASC for block matrix form of a Gram-matrix of the extension}
Let $f\in \mathbb{R} \langle \underline X \rangle$ be a noncommutative polynomial.  Let $h$ be a SOHS part of $f$.  Let $\widetilde{f}$ be a SOHS extension of $f$.  Then the following are equivalent :
\begin{enumerate}
    \item $\widetilde{f}$ is a Gram-like matrix preserving extension of $f$ w.r.t $h$ such that there exists a positive semidefinite Gram-like matrix $G_{\widetilde{f}}$ of $\widetilde{f}$ of the form
    \begin{equation}\label{equation_structure of a Gram-like matrix preserving Gram-like matrix}
        G_{\widetilde{f}}=
\left(\begin{array}{@{}c|c@{}}
  \begin{matrix}
  G_{h}
  \end{matrix}
  & \bigzero \\
\hline 
  \bigzero & 
  \begin{matrix}
  G_{\widetilde{f}-h}
  \end{matrix}
\end{array}\right)\;,
    \end{equation}
where $G_{h}$ and $G_{\widetilde{f}-h}$ are $k\times k$ and $l\times l$ positive semidefinite Gram-like matrices of $h$ and $\widetilde{f}-h$ respectively.
\item There exist a representation $\mathbf{R}_h$ of $h$ and and a representation $\mathbf{R}_{\widetilde{f}-h}$ of $\widetilde{f}-h$, given as 
\begin{equation*}
    \begin{split}
      \mathbf{R}_h\; :\; h=&\sum_{i=1}^{k}a_i\zeta_i^{\ast}\zeta_i+\sum_{1\leq i < j\leq k} b_{ij}(\zeta_i^{\ast}\zeta_j+\zeta_j^{\ast}\zeta_i)\;,\\
      \mathbf{R}_{\widetilde{f}-h}\; :\; \widetilde{f}-h=&\sum_{i=1}^{l}c_i\eta_i^{\ast}\eta_i+\sum_{1\leq i < j\leq l} d_{ij}(\eta_i^{\ast}\eta_j+\eta_j^{\ast}\eta_i),
    \end{split}
\end{equation*}
both satisfying conditions as in \eqref{conditions on the coefficients of general form of a representation of a SOHS polynomial}, such that
\begin{equation*}
    \zeta_i\neq \eta_j \;\text{for\;all\;} 1\leq i \leq k, 1\leq j \leq l.
\end{equation*}
\end{enumerate} 
\end{theorem}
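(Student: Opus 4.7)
The plan is to prove both directions by a direct translation between the block diagonal structure of $G_{\widetilde{f}}$ and the disjointness of the monomial supports of the representations of $h$ and $\widetilde{f}-h$, with Theorem~\ref{Theorem_general form of a SOHS polynomial} as the bridge.

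To prove $(1)\Rightarrow (2)$, I would start from the block diagonal form \eqref{equation_structure of a Gram-like matrix preserving Gram-like matrix} with associated monomial vector $W_{k+l}$ and split it as $W_{k+l}=\begin{pmatrix} W_k \\ W_l\end{pmatrix}$, where $W_k$ corresponds to $G_h$ and $W_l$ to $G_{\widetilde{f}-h}$. Direct expansion gives
\[
\widetilde{f}=W_{k+l}^{\ast}\,G_{\widetilde{f}}\,W_{k+l}=W_k^{\ast}\,G_h\,W_k+W_l^{\ast}\,G_{\widetilde{f}-h}\,W_l,
\]
and since $W_k^{\ast}\,G_h\,W_k=h$ by hypothesis, it follows that $W_l^{\ast}\,G_{\widetilde{f}-h}\,W_l=\widetilde{f}-h$. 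As the entries of $W_{k+l}$ are distinct (being the monomial vector of a Gram-like matrix), the entries of $W_k$ and those of $W_l$ are pairwise disjoint, which furnishes the required $\zeta_i\neq\eta_j$ for all $i,j$. Applying Theorem~\ref{Theorem_general form of a SOHS polynomial}(1) to each of the pairs $(G_h,W_k)$ and $(G_{\widetilde{f}-h},W_l)$, after trimming any zero rows/columns of $G_h$ or $G_{\widetilde{f}-h}$ via Lemma~\ref{Claim_about impact of zero diagonal entries for a positive semidefinite matrix} so as to satisfy the positivity requirement \eqref{subeqn : b}, then produces the representations $\mathbf{R}_h$ and $\mathbf{R}_{\widetilde{f}-h}$ with the claimed properties.

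For the reverse direction $(2)\Rightarrow (1)$, I would invoke Theorem~\ref{Theorem_general form of a SOHS polynomial}(2) separately on $\mathbf{R}_h$ and $\mathbf{R}_{\widetilde{f}-h}$ to obtain positive semidefinite Gram-like matrices $G_h$ and $G_{\widetilde{f}-h}$ together with associated monomial vectors $W_k$ (with entries $\zeta_1,\ldots,\zeta_k$) and $W_l$ (with entries $\eta_1,\ldots,\eta_l$). Then I would set
\[
G_{\widetilde{f}}:=\left(\begin{array}{@{}c|c@{}} G_h & \mathbf{0} \\ \hline \mathbf{0} & G_{\widetilde{f}-h}\end{array}\right),\qquad W_{k+l}:=\begin{pmatrix} W_k \\ W_l\end{pmatrix}.
\]
By Lemma~\ref{Lemma_block psd(pd)implies psd(pd)}, $G_{\widetilde{f}}$ is positive semidefinite; by the disjointness hypothesis $\zeta_i\neq \eta_j$ the concatenated vector $W_{k+l}$ has pairwise distinct monomial entries; and a direct expansion yields $W_{k+l}^{\ast}\,G_{\widetilde{f}}\,W_{k+l}=h+(\widetilde{f}-h)=\widetilde{f}$. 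Hence $G_{\widetilde{f}}$ is a positive semidefinite Gram-like matrix of $\widetilde{f}$ of the desired block diagonal form \eqref{equation_structure of a Gram-like matrix preserving Gram-like matrix}; it contains $G_h$ as a principal submatrix and $W_{k+l}$ contains all entries of $W_k$, so $\widetilde{f}$ is a Gram-like matrix preserving extension of $f$ with respect to $h$.

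The proof is essentially bookkeeping and contains no deep obstacle. The only mild subtlety arises in the forward direction, where a vanishing diagonal entry of $G_h$ or $G_{\widetilde{f}-h}$ would break the strict positivity condition \eqref{subeqn : b} required in the target representations; this is handled transparently by first shrinking the blocks using Lemma~\ref{Claim_about impact of zero diagonal entries for a positive semidefinite matrix} before extracting the representations.
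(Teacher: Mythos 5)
Your proposal is correct and follows essentially the same route as the paper: in both directions you split/concatenate the monomial vector, expand the block diagonal quadratic form to identify $W_k^{\ast}G_hW_k=h$ and $W_l^{\ast}G_{\widetilde{f}-h}W_l=\widetilde{f}-h$, use distinctness of the entries of the combined monomial vector for $\zeta_i\neq\eta_j$, and in the converse build the block diagonal matrix via Lemma \ref{Lemma_block psd(pd)implies psd(pd)} and Theorem \ref{Theorem_general form of a SOHS polynomial}. Your extra remark about trimming zero rows/columns via Lemma \ref{Claim_about impact of zero diagonal entries for a positive semidefinite matrix} is a harmless refinement of a point the paper leaves implicit (Gram-like matrices are taken to have no zero rows), not a different argument.
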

\begin{proof}
$(1)\Rightarrow (2)$ :  By Definition \ref{definition_Gram-like matrix preserving extension} and Remark \ref{remark_Gram-like matrix preserving extension}, there exists a monomial vector
\begin{equation*}
    \widetilde{W_{k+l}}:=\begin{pmatrix}
        \zeta_1 & \cdots & \zeta_k & \eta_{1} & \cdots & \eta_{l}
    \end{pmatrix}\;^t 
\end{equation*}
associated to $G_{\widetilde{f}}$ such that $h=W_k^{\ast}\;G_h\;W_k$.  Here $W_k=\begin{pmatrix}
        \zeta_1 & \cdots & \zeta_k
    \end{pmatrix}\;^t$.  Then,
\begin{equation*}
\begin{split}
    \widetilde{f}-h&=\widetilde{W_{k+l}}^{\ast}\;\left(\begin{array}{@{}c|c@{}}
  \begin{matrix}
  G_{h}
  \end{matrix}
  & \bigzero \\
\hline 
  \bigzero & 
  \begin{matrix}
  G_{\widetilde{f}-h}
  \end{matrix}
\end{array}\right)\;\widetilde{W_{k+l}}-W_k^{\ast}\;G_h\;W_k\\
&=W_l^{\ast}\;G_{\widetilde{f}-h}\;W_l\;,
    \end{split}
\end{equation*}    
where $W_l=\begin{pmatrix}
        \eta_{1} & \cdots & \eta_{l}
    \end{pmatrix}\;^t$.  Now, let $\mathbf{R}_h$ and $\mathbf{R}_{\widetilde{f}-h}$ be the associated representations of $G_h$ and $G_{\widetilde{f}-h}$ respectively.  Then, $\mathbf{R}_h$ and $\mathbf{R}_{\widetilde{f}-h}$ must satisfy the conditions as in \eqref{conditions on the coefficients of general form of a representation of a SOHS polynomial}.  Moreover, $\zeta_i\neq \eta_j$ for all $1\leq i \leq k$, $1\leq j \leq l$, as $\widetilde{W_{k+l}}$ is a monomial vector.

$(2)\Rightarrow (1)$ : Let $G_h$ and $G_{\widetilde{f}-h}$ be positive semidefinite Gram-like matrices of $h$ and $\widetilde{f}-h$ associated with representations $\mathbf{R}_h$ and $\mathbf{R}_{\widetilde{f}-h}$ respectively.  As, $\zeta_i \neq \eta_j$ for all $1\leq i \leq k$, $1\leq j \leq l$, we can form the the monomial vector 
\begin{equation*}\label{equation_monomial vector that gives the wanted form of a Gram-like matrix of an extension}
    \widetilde{W_{k+l}}:=\begin{pmatrix}
        \zeta_1 & \cdots & \zeta_k & \eta_1 & \cdots & \eta_l
    \end{pmatrix}\;^t.  
\end{equation*}
Then, denoting $\begin{pmatrix}
        \zeta_1 & \cdots & \zeta_k
    \end{pmatrix}\;^t$ and $\begin{pmatrix}
        \eta_1 & \cdots & \eta_l
    \end{pmatrix}\;^t$ by $W_k$ and $W_l$ respectively, we have:
\begin{equation*}
\begin{split}
    \widetilde{f}&=h+(\widetilde{f}-h)\\
    &=W_k^{\ast}\;G_h\;W_k+W_l^{\ast}\;G_{\widetilde{f}-h}\;W_l\\
    &=\widetilde{W_{k+l}}^{\ast}\;\left(\begin{array}{@{}c|c@{}}
  \begin{matrix}
  G_{h}
  \end{matrix}
  & \bigzero \\
\hline 
  \bigzero & 
  \begin{matrix}
  G_{\widetilde{f}-h}
  \end{matrix}
\end{array}\right)\;\widetilde{W_{k+l}}\;.
    \end{split}
\end{equation*}
\end{proof}
\begin{remark}\label{Remark_producing examples of sohs-sohs=sohs}
Although the sum $h_1+h_2$ of two SOHS polynomials $h_1$ and $h_2$ is always SOHS, the difference $h_1-h_2$ does not enjoy the same.  
An example would be $h_1=1+X_1^2+X_1X_2+X_2X_1+X_2^2$ and $h_2=1+X_1^2$, as $h_1-h_2=X_1X_2+X_2X_1+X_2^2$ is not a SOHS.  Comparing this with Example \ref{Example_Block G-l MPE doesn't always exist}, we conclude that an important application of Theorem \ref{Theorem_NASC for block matrix form of a Gram-matrix of the extension} is to produce nontrivial SOHS polynomials such that their difference is also a SOHS.   \end{remark}

Theorem \ref{Theorem_NASC for block matrix form of a Gram-matrix of the extension} talks about an existential criterion for a Gram-like matrix $G_{\widetilde{f}}$ of the extension $\widetilde{f}$ of $f$ of the form as in \eqref{equation_structure of a Gram-like matrix preserving Gram-like matrix}.  We now present one such method of construction of $G_{\widetilde{f}}$ for a class of noncommutative polynomials.

\begin{theorem}\label{Block PSD technique_with non-SOHS part having multiple terms}
Let $f=h+\sum_{j=1}^ra_j\zeta_j\in \mathbb{R} \langle \underline X \rangle$ be a noncommutative polynomial, without any constant term, where $h$ is a SOHS.  Assume that $\rc(\eta,i)\neq \zeta_j$, $1\leq j \leq r$, for any $i\geq 0$ and for any monomial $\eta$ of $h$. Then
\begin{enumerate}
\item It is always possible to obtain a Gram-like matrix preserving extension $\widetilde{f}$ of $f$ w.r.t $h$ by adding at most $2r+1$ terms with $f$.
\item  In fact, infinitely many such $\widetilde{f}$ can be obtained.
\end{enumerate}
\end{theorem}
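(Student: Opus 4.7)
The plan is to exhibit a positive semidefinite Gram-like matrix for $\widetilde f$ in block-diagonal form $\widetilde{G}=\mathrm{diag}(G_h,B)$, with associated monomial vector obtained by appending $1,\zeta_1,\ldots,\zeta_r$ to the monomial vector $W_k$ of $G_h$. The block $B$ will carry the coefficients $a_j$ off-diagonally and leave positive diagonal entries free, to be chosen via Lemma \ref{Lemma_generating a psd matrix by choosing the diagonals of the lower diagonal block}. Before building $B$, I would extract the following consequences of the right-chip hypothesis: if some $\zeta_j$ were in $W_k$, the positive $(\zeta_j,\zeta_j)$ entry of $G_h$ would force $\zeta_j^{\ast}\zeta_j$ into $h$, but $\rc(\zeta_j^{\ast}\zeta_j,\deg\zeta_j)=\zeta_j$ contradicts the hypothesis; so $\zeta_j\notin W_k$. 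The same mechanism rules out $\zeta_j^{\ast}\zeta_j$ ever being a monomial of $h$, and taking the right chip of full length (together with the symmetry of $h$, which closes its set of monomials under $\ast$) shows that neither $\zeta_j$ nor $\zeta_j^{\ast}$ is a monomial of $h$. Finally, the absence of a constant term in $f$ together with the positivity of the diagonals of a Gram-like matrix forces $1\notin W_k$.

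Next, form $\widetilde{W}=(W_k^{t},\,1,\,\zeta_1,\ldots,\zeta_r)^{t}$ and set
$$
\widetilde{G}=\begin{pmatrix} G_h & \mathbf{0} \\ \mathbf{0} & B\end{pmatrix},\quad B=\begin{pmatrix} c_0 & a_1 & \cdots & a_r \\ a_1 & c_1 &  &  \\ \vdots &  & \ddots &  \\ a_r &  &  & c_r \end{pmatrix},
$$
with $c_0>0$ arbitrary, all off-diagonal entries of the $r\times r$ lower-right corner of $B$ set to zero, and $c_1,\ldots,c_r$ to be chosen. The leading $1\times 1$ principal block of $B$ is the positive scalar $c_0$, hence positive definite, so the column-space condition of Lemma \ref{Lemma_generating a psd matrix by choosing the diagonals of the lower diagonal block} holds automatically; applying that lemma furnishes positive reals $c_1,\ldots,c_r$ making $B$, and therefore $\widetilde{G}$, positive semidefinite by Lemma \ref{Lemma_block psd(pd)implies psd(pd)}. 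All diagonal entries of $\widetilde{G}$ remain positive, so $\widetilde{G}$ is a genuine Gram-like matrix and $\widetilde{f}=\widetilde{W}^{\ast}\widetilde{G}\widetilde{W}$ is SOHS by Proposition \ref{proposition_order of monomials and size of monomial vector do not matter}.

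A direct expansion gives
$$
\widetilde{f}=h+c_0+\sum_{j=1}^{r}a_j(\zeta_j+\zeta_j^{\ast})+\sum_{j=1}^{r}c_j\,\zeta_j^{\ast}\zeta_j,
$$
so $\widetilde{f}-f=c_0+\sum_{j=1}^{r}a_j\zeta_j^{\ast}+\sum_{j=1}^{r}c_j\zeta_j^{\ast}\zeta_j$ consists of at most $2r+1$ monomials. The observations in the first paragraph show that none of these monomials collides with a monomial of $h$, so $\widetilde{f}$ is a \emph{nontrivial} SOHS extension; block-diagonality ensures that $G_h$ is a principal submatrix of $\widetilde{G}$ and that the monomial vector of $\widetilde{G}$ contains that of $G_h$, so $\widetilde{f}$ is Gram-like matrix preserving w.r.t.\ $h$, proving (1). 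For (2), note that $c_0$ can be chosen freely in $(0,\infty)$ and Lemma \ref{Lemma_generating a psd matrix by choosing the diagonals of the lower diagonal block} leaves an open cone of admissible $(c_1,\ldots,c_r)$; distinct choices produce distinct $\widetilde{f}$, yielding infinitely many extensions. I expect the main obstacle to be the bookkeeping in the first paragraph --- converting the single condition $\rc(\eta,i)\neq\zeta_j$ into the several noncollision statements needed to guarantee that the produced monomials $1,\zeta_j^{\ast},\zeta_j^{\ast}\zeta_j$ are all genuinely new --- since the linear-algebraic core of the argument reduces cleanly to one application of the diagonal-filling lemma.
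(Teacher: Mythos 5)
Your construction follows the paper's own route almost exactly (block matrix $\operatorname{diag}(G_h,B)$ with monomial vector $(W_k^t,1,\zeta_1,\dots,\zeta_r)^t$, a new constant, the new monomials $\zeta_j^{\ast}$ and $\zeta_j^{\ast}\zeta_j$, and infinitely many admissible diagonal choices), and your use of Lemma \ref{Lemma_generating a psd matrix by choosing the diagonals of the lower diagonal block} in place of an explicit Schur-complement choice for $B$ is perfectly fine. But there is one genuine gap: you place the full coefficient $a_j$ in both off-diagonal positions $(1,j+1)$ and $(j+1,1)$ of $B$ for every $j$, and this fails whenever some $\zeta_j$ is a symmetric monomial, $\zeta_j^{\ast}=\zeta_j$ (e.g.\ $\zeta_j=X_2X_1X_2$, which is not excluded by the hypothesis $\rc(\eta,i)\neq\zeta_j$; take $f=X_1^2+X_2X_1X_2$ with $h=X_1^2$). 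In that case your expansion yields the term $2a_j\zeta_j$ in $\widetilde{f}$, so $\widetilde{f}-f$ contains $a_j\zeta_j$ with $\zeta_j$ a monomial of $f$. This violates the requirement $\alpha_i\neq\delta_j$ in the definition of a (nontrivial) SOHS extension, and since a Gram-like matrix preserving extension is by definition such an extension, your $\widetilde{f}$ is not one for those inputs. Your claim that ``none of these monomials collides with a monomial of $h$'' is correct, but collision with the monomials $\zeta_j$ of $f-h$ themselves is the issue, and your cancellation $\sum_j a_j(\zeta_j+\zeta_j^{\ast})-\sum_j a_j\zeta_j=\sum_j a_j\zeta_j^{\ast}$ hides it rather than resolves it.

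The paper avoids exactly this by setting the off-diagonal entry to $d_j=a_j$ when $\zeta_j^{\ast}\neq\zeta_j$ and $d_j=\tfrac{a_j}{2}$ when $\zeta_j^{\ast}=\zeta_j$, so that $d_j\zeta_j+d_j\zeta_j^{\ast}$ reproduces $a_j\zeta_j$ with its original coefficient and the only added terms are $\zeta_j^{\ast}$ (for the nonsymmetric $\zeta_j$), $\zeta_j^{\ast}\zeta_j$, and one constant, which also keeps the count at $2r+1$. Your argument is repaired by the same halving; as written, though, the step ``$\widetilde{f}$ is a nontrivial SOHS extension'' is false in the symmetric case, so the proof does not cover all polynomials allowed by the theorem.
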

\begin{proof}
Let $\deg(f)=d$.  By hypothesis, $f$ can be written as follows:
\begin{equation*}
f=h+\sum_{j=1}^r a_j\zeta_j=\sum_{i=1}^m g_i^{\ast}g_i+\sum_{j=1}^r a_j\zeta_j,
\end{equation*}
where $a\in \mathbb{R}$, $g_i$s and $\zeta$ lie in $\mathbb{R} \langle \underline X \rangle$, $\deg(g_i)\leq \tfrac{d}{2}$ if $d$ is even or $\deg(g_i)\leq \tfrac{d-1}{2}$ if $d$ is odd, for all $1\leq i \leq m$ and $1\leq \deg(\zeta_j)\leq d$ for all $1\leq j \leq r$.  Let $G_h$, say of order $k$, be a positive semidefinite Gram-like matrix of $h$ such that there exist a column vector $W_k$ satisfying
\begin{enumerate}[(a)]
\item $1$ does not appear as an entry of $W_k$,
\item $\zeta_1,\ldots, \zeta_r$ do not appear as entries of $W_k$, and
\item \begin{equation}\label{Gram matrix of the SOHS part of the given polynomial_with multiple terms in the non-SOHS part}
h=\sum_{i=1}^m g_i^{\ast}g_i=W_k^{\ast}\;G_h\;W_k.
\end{equation}
\end{enumerate}
It can be noted that existence of such $W_k$ satisfying
\begin{itemize}
    \item (a) is guaranteed by the assumption that $f$, hence $\sum_{i=1}^m g_i^{\ast}g_i$, doesn't contain any constant term,
    \item (b) is guaranteed by the hypothesis $\rc(\eta,i)\neq \zeta_j$, $1\leq j \leq r$, for any $i\geq 0$ and for any monomial $\eta$ of the SOHS part of $f$  (Indeed, let $\zeta_j$ appears as an entry of $W_k$ for some $j$.  Then $\zeta_j^{\ast}\zeta_j$ appears in the expansion of $\sum_{i=1}^m g_i^{\ast}g_i$ as its coefficient in nonzero by \eqref{subeqn : b} of Theorem \ref{Theorem_general form of a SOHS polynomial}.  But then $\rc(\zeta_j^{\ast}\zeta_j,\deg(\zeta_j))=\zeta_j$.),
    \item (c) is guaranteed by  Theorem \ref{Theorem_general form of a SOHS polynomial}.
\end{itemize}
Now we consider the matrix $\widetilde{G}$ given as follows:
\begin{equation}\label{construction of psd matrix corresponding to extended SOHS_with multiple terms in non-SOHS part}
\widetilde{G}:=
\left(\begin{array}{@{}c|c@{}}
  \begin{matrix}
  G_h
  \end{matrix}
  & \bigzero \\
\hline
  \bigzero &
  \begin{matrix}
  \sum_{j=1}^r d_i^{2} + 1 & d_1 & d_2 & \cdots & d_r\\
  d_1 & 1 & 0 & \cdots & 0\\
  d_2 & 0 & 1 & \cdots & 0\\
  \vdots & \vdots & \vdots & \ddots & \vdots\\
  d_r & 0 & 0 & \cdots & 1
  \end{matrix}
\end{array}\right)\;,
\end{equation} 
where
\begin{equation*}
d_i=\left\{ \begin{array}{ll}
a_i & \mbox{if $\zeta_i^{\ast}\neq \zeta_i$}\;;\\\\
\tfrac{a_i}{2} & \mbox{if $\zeta_i^{\ast}= \zeta_i$}\;.
\end{array} \right.
\end{equation*}
The lower diagonal block of the matrix $\widetilde{G}$, as in \eqref{construction of psd matrix corresponding to extended SOHS_with multiple terms in non-SOHS part}, is positive semidefinite by Lemma \ref{Lemma_psdness in terms of leading principal minor and Schur complement}.  Moreover, as the matrix $G_h$, as in \eqref{Gram matrix of the SOHS part of the given polynomial_with multiple terms in the non-SOHS part}, is positive semidefinite, so is the matrix $\widetilde{G}$ by Lemma \ref{Lemma_block psd(pd)implies psd(pd)}.\\
We now define  $\widetilde{W_{k+r+1}}$ as follows.
\begin{equation}\label{construction of a column vector of monomials corresponding to extended SOHS_with multiple terms in non-SOHS part}
\widetilde{W_{k+r+1}}:=
\left(\begin{array}{@{}c}
  \begin{matrix}
  W_k^t & 1 & \zeta_1 & \cdots & \zeta_r
  \end{matrix}
\end{array}\right)^t\;.
\end{equation}
We define $\widetilde{f}$ as follows :
\begin{equation}\label{construction of SOHS extension_non-SOHS part having multiple terms}
\widetilde{f}:=\widetilde{W_{k+r+1}}^{\ast}\;\widetilde{G}\;\widetilde{W_{k+r+1}},
\end{equation} 
where $\widetilde{G}$ and $\widetilde{W}_{k+r+1}$ are as in \eqref{construction of psd matrix corresponding to extended SOHS_with multiple terms in non-SOHS part} and \eqref{construction of a column vector of monomials corresponding to extended SOHS_with multiple terms in non-SOHS part} respectively.  Then we have 
\begin{equation}\label{showing the constructed SOHS as an extension_non-SOHS part having multiple terms}
\begin{split}
\widetilde{f}&=\sum_{i=1}^m g_i^{\ast}g_i+\sum_{j=1}^r d_j\zeta_j+\sum_{j=1}^r d_j\zeta_j^{\ast}+\sum_{j=1}^r \zeta_j^{\ast}\zeta_j+\sum_{j=1}^r d_i^{2} + 1\\
&=f+\sum_{\zeta_j^{\ast}\neq \zeta_j} a_j\zeta_j^{\ast}+\sum_{j=1}^r \zeta_j^{\ast}\zeta_j+\sum_{j=1}^r d_i^{2} + 1.
\end{split}
\end{equation}
It can be noted that $\widetilde{f}$ is obtained by adding at most $2r+1$ many terms with $f$. Therefore, first part of the assertion now follows from Theorem \ref{Theorem_general form of a SOHS polynomial}, \eqref{construction of SOHS extension_non-SOHS part having multiple terms} and \eqref{showing the constructed SOHS as an extension_non-SOHS part having multiple terms}.

It can be observed that any $\widetilde{G}$ obtained by replacing the $(k+1,k+1)$-th entry of the matrix by any $A\geq\sum_{j=1}^r d_i^{2}$ will be positive semidefinite by Lemma \ref{Lemma_psdness in terms of leading principal minor and Schur complement} and hence will produce another $\widetilde{f}$ satisfying the first part of the assertion.  Hence, we have infinitely many choices for such $\widetilde{f}$. 
\end{proof}
\subsection{Positive semidefinite completion problem and SOHS via graphs}
\label{subsec : Positive semidefinite completion problem and SOHS via graphs}
Positive semidefinite completion problem is one of the most important problems not only in the realm of mathematics but also in physics.  Let $A$ be a $n\times n$ partially specified positive semidefinite matrix, i.e, a matrix with some unknown entries and some natural conditions on the known entries.  Positive semidefinite completion problem deals with finding the pattern of known entries (and hence of the unknown entries as well) for which $A$ can be completed to a positive semidefinite matrix.  Apart from having huge applications, this problem is very intriguing on its own.  For example, a solution of this problem has been given via graph theory. 
 It is known that such a solution exists when the pattern of known entries is given by a chordal graph.       

In this subsection, we answer Question \ref{Introduction_main question_possible modification} via positive semidefinite completion problem.  We recall some basic definitions and results related to this and study the same problem from the context of extending a partial SOHS polynomial to a SOHS polynomial.  
\begin{definition}\label{partial symmetric matrix_definition}
A partially specified $n\times n$ matrix is a \textit{partial symmetric matrix} if 
\begin{itemize}
\item all the diagonal entries are specified, and
\item if the $(i,j)$-th entry is specified, then so is the $(j,i)$-th entry and they are equal.
\end{itemize} 
\end{definition}

\begin{definition}\label{partial positive (semi) definite matrix_definition}
A partial symmetric matrix is said to be a \textit{partial positive definite matrix} (respectively \textit{partial positive semidefinite matrix}) if any of its completely specified principal minor is positive (respectively nonnegative). 
\end{definition}
\begin{definition}\label{definition_specification graph}
The \textit{specification graph} $\mathcal{G}_A$ of a $n\times n$ partial symmetric matrix $A$ is a graph with vertex set $V=\{1,2,\ldots,n\}$ and for $i,j\in V$ with $i\neq j$, the vertices $i$ and $j$ are adjacent whenever $(i,j)$-th entry of $A$ is specified. 
\end{definition}
\begin{definition}\label{definition_completable}
A graph $\mathcal{G}$ with vertex set $\{1,2,\ldots,n\}$ is \textit{positive definite completable} (respectively \textit{positive semidefinite completable}) if any $n\times n$ partial positive definite (respectively semidefinite) matrix with specification graph $\mathcal{G}$ is completable to a positive definite (respectively semidefinite) matrix. 
\end{definition}

\begin{definition}\label{chordal graph_definition}
A graph is said to be \textit{chordal} if it doesn't contain the cycle $\mathcal{C}_k$ on $k$ vertices as an induced subgraph for any $k\geq 4$.
\end{definition}
\begin{example}
\begin{enumerate}
\item The complete graph $\mathcal{K}_n$ on $n$ vertices, trees are some examples of chordal graphs.
\item The graph $\mathcal{C}_4$, the cycle on $4$ vertices, is not a chordal graph.
\end{enumerate}
\end{example}
\begin{theorem}\label{proposition_completable iff chordal specification graph}
A graph $\mathcal{G}$ is positive definite (semidefinite) completable if and only if $\mathcal{G}$ is chordal.
\end{theorem}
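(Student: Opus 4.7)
The plan is to prove both implications, and the two directions are of rather different flavors.

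For the forward direction, I argue by contrapositive: if $\mathcal{G}$ is not chordal, then $\mathcal{G}$ is not positive semidefinite (respectively positive definite) completable. By hypothesis, $\mathcal{G}$ contains an induced cycle $\mathcal{C}_k$ on vertices $v_1, \ldots, v_k$ with $k \geq 4$. The plan is to construct a partial positive (semi)definite matrix $A$ with specification graph precisely $\mathcal{C}_k$ that admits no PSD completion, and then embed it in a partial matrix on $\mathcal{G}$. A standard construction sets all diagonals of $A$ to $1$, sets $A_{v_i v_{i+1}} = c$ for $1 \leq i \leq k-1$, and sets $A_{v_k v_1} = -c$, where $c \in (0,1)$ is chosen close to $1$. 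Any PSD completion would force the unspecified entries $A_{v_1 v_i}$ to propagate a ``rotation'' along the cycle, producing chained inequalities incompatible with the sign reversal along the closing edge; a direct $2 \times 2$ principal minor check (via Lemma \ref{Generalized Sylvester's criterion for psd matrix}) then exhibits negativity. Embedding this $A$ as a principal submatrix of a partial matrix on $\mathcal{G}$ yields a partial PSD matrix with specification graph $\mathcal{G}$ that admits no PSD completion.

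For the reverse direction, I proceed by induction on the number of non-edges of $\mathcal{G}$. The base case of a complete graph is trivial. For the inductive step, I invoke the classical structural property of chordal graphs: if $\mathcal{G}$ is chordal and not complete, then there exist non-adjacent vertices $u, v \in V(\mathcal{G})$ together with a clique $K \subseteq V(\mathcal{G})$ such that $K$ separates $u$ from $v$ in $\mathcal{G}$, and $\mathcal{G} \cup \{uv\}$ remains chordal. The one-step completion lemma now asserts that the entry $(u,v)$ of any partial PSD matrix $A$ with specification graph $\mathcal{G}$ can be filled in while preserving the partial PSD property. I apply Lemma \ref{Lemma_psdness in terms of leading principal minor, column space and Schur complement} to the principal submatrix of $A$ indexed by $\{u\} \cup K \cup \{v\}$: the blocks indexed by $\{u\} \cup K$ and $K \cup \{v\}$ are PSD and completely specified by hypothesis, and assigning $A_{uv} := A_{uK} A_{KK}^{-} A_{Kv}$ makes the $\{u\} \cup K \cup \{v\}$ block PSD, with the required column-space containment following from the clique structure of $K$. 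Applying the induction hypothesis to the augmented matrix with specification graph $\mathcal{G} \cup \{uv\}$ completes the proof. The PD version is analogous, with $A_{KK}$ invertible and the filling making the augmented block positive definite via Lemma \ref{Lemma_psdness in terms of leading principal minor and Schur complement}.

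The main obstacle will be justifying that the one-step completion preserves partial PSD \emph{globally}, i.e., that every principal submatrix of the augmented matrix which becomes completely specified after filling $(u,v)$ is PSD. The clique-separator property of $K$ is the key ingredient here: any newly completely specified principal submatrix must involve both $u$ and $v$, and separation by $K$ forces its index set to split as $\{u\} \cup K' \cup \{v\}$ with $K' \subseteq K$; the PSD property of such blocks then descends from the $\{u\} \cup K \cup \{v\}$ block, as any such block is a principal submatrix of the one we have arranged to be PSD. A secondary subtlety is the consistent choice of generalized inverse $A_{KK}^{-}$ across induction steps when $A_{KK}$ is merely PSD, which is absorbed by the column-space hypothesis of Lemma \ref{Lemma_psdness in terms of leading principal minor, column space and Schur complement}.
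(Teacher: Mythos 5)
The paper does not actually prove this statement: it is imported from the literature, with the proof outsourced to \cite{GJSW} and \cite{Ba}. Your self-contained argument is therefore necessarily a different route from the paper's, and in outline it is the classical Grone--Johnson--de~S\'a--Wolkowicz induction, which is the right strategy. However, two steps do not work as you have written them.

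First, in the inductive step you cannot use an arbitrary clique separator $K$. For the filling $A_{uv}:=A_{uK}A_{KK}^{-}A_{Kv}$ to be defined, every entry of $A_{uK}$ and $A_{Kv}$ must be specified, i.e.\ $u$ and $v$ must be adjacent to \emph{every} vertex of $K$; a clique that separates $u$ from $v$ need not have this property, so the blocks indexed by $\{u\}\cup K$ and $K\cup\{v\}$ are in general not completely specified and are not ``PSD by hypothesis''. The set you should use is $C=N(u)\cap N(v)$: chordality of $\mathcal{G}$ forces $C$ to be a clique (two nonadjacent common neighbours of the nonadjacent pair $u,v$ would create an induced $\mathcal{C}_4$), the blocks on $\{u\}\cup C$ and $C\cup\{v\}$ are then fully specified, and every principal submatrix that becomes completely specified after filling $(u,v)$ automatically has index set $\{u\}\cup C'\cup\{v\}$ with $C'\subseteq C$, because its middle vertices must be common neighbours of $u$ and $v$ --- no separation property is needed for this containment. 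What you still must quote (or prove) separately is the structural fact that a non-complete chordal graph has a nonadjacent pair $u,v$ with $\mathcal{G}\cup\{uv\}$ chordal, so that the induction on missing edges can continue. Second, in the forward direction the phrase ``a direct $2\times 2$ principal minor check'' is not enough: $2\times 2$ minors only bound the unspecified entries by $1$ in absolute value and give no contradiction. The contradiction comes from chained $3\times 3$ principal minors along the cycle (for $\mathcal{C}_4$ with unit diagonal, entries $c,c,c,-c$ on the cycle and $x=A_{13}$ unspecified, the minors on $\{1,2,3\}$ and $\{1,3,4\}$ force $x\ge 2c^2-1$ and $x\le 1-2c^2$, impossible once $c^2>1/2$), or equivalently from the Gram-vector/angle argument; you should also state how the cycle example is extended to all of $\mathcal{G}$ (set the remaining specified off-diagonal entries to $0$; induced-ness of the cycle guarantees the extended partial matrix is still partial positive definite and that any completion would restrict to a completion of the cycle block, via Lemma \ref{Generalized Sylvester's criterion for psd matrix}). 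With these repairs your argument is a correct, self-contained proof of the cited theorem.
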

\begin{proof}
See \cite[Theorem 7, p. 120]{GJSW} and \cite[Lemma 12.7, p.~161 and Theorem 12.11, p.~163]{Ba}.
\end{proof}
Now we describe what one means by incompleteness of a symmetric noncommutative polynomial w.r.t its partially specified Gram-like matrix and its completion. 
\begin{definition}\label{definition_incomplete polynomial}
 A symmetric noncommutative polynomial $f$ of degree $2d$ is said to be \textit{incomplete w.r.t its representation} $\mathbf{R}_f$, as in \eqref{equation_ most general form of a representation of a symmetric polynomial with positive co-efficients of square terms}, along with conditions as in \eqref{conditions on the coefficients of general form of a representation of a symmetric polynomial with positive co-efficients of square terms}, if
\begin{itemize}
\item  all $a_i$s are specified (positive real numbers by conditions as in \eqref{subeqn : b'}), and
\item some $b_{ij}$s are unspecified.
\end{itemize}
\end{definition}

\begin{definition}\label{definition_completion of a polynomial}
Let $W_{k}$ be a column vector of size $k$ of all  monomials from $\mathbb{R}\langle \underline{X}\rangle$ of degree at most $d$ taken in any arbitrary order and $G$ a $k\times k$ partial positive semidefinite matrix with positive diagonal entries.  Then for any positive semidefinite completion $\overline{G}$ of $G$, the polynomial $W_{k}^{\ast}\;\overline{G}\;W_{k}$ is called \textit{a SOHS completion of the polynomial} $f:=W_{s(d,n)}^{\ast}\;G\;W_{s(d,n)}$ \textit{w.r.t partial positive semidefinite Gram-like matrix} $G$, and is denoted by $\overline{f}^{\;G,\;W_k}$.
\end{definition}
We now want to look at incomplete polynomials that are not SOHS, but potentially very close to SOHS polynomials.  That is, whenever we collect appropriate monomials (with known coefficients) of such an incomplete polynomial $f$ which have the potential to form a SOHS, it does so.  
\begin{definition}\label{Definition_quasi SOHS}
Let $f$ be an incomplete polynomial w.r.t a given representation $\mathbf{R}_f$, as in \eqref{equation_ most general form of a representation of a symmetric polynomial with positive co-efficients of square terms}, along with conditions as in \eqref{conditions on the coefficients of general form of a representation of a symmetric polynomial with positive co-efficients of square terms}.  Then $f$ is said to be \textit{quasi SOHS w.r.t} $\mathbf{R}_f$ if for any $K\subset \{1,2,\ldots, k\}$ such that $b_{ij}$s are known for all $i,j\in K$,
\begin{equation*}
\sum_{i\in K}a_i\zeta_i^{\ast}\zeta_i+\sum_{i,j\in K, i < j} b_{ij}(\zeta_i^{\ast}\zeta_j+\zeta_j^{\ast}\zeta_i)
\end{equation*}
is a SOHS.
\end{definition}
We are now in a stage to reformulate Theorem \ref{proposition_completable iff chordal specification graph} in the context of the dictum of this paper.  To be specific, we provide a criteria regarding when a quasi SOHS polynomial qualifies to a SOHS polynomial.  We have the following theorem in that regard. 
\begin{theorem}\label{main theorem_SOHS completion of a quasi-SOHS iff known pattern is chordal}
Let $f\in \Sym\;\mathbb{R}\langle \underline{X}\rangle$ be a quasi SOHS polynomial w.r.t a representation $\mathbf{R}_f$, as in
\eqref{equation_ most general form of a representation of a symmetric polynomial with positive co-efficients of square terms}, along with conditions as in \eqref{conditions on the coefficients of general form of a representation of a symmetric polynomial with positive co-efficients of square terms}.  Then any such $f$ has a SOHS completion $\overline{f}^{\;G_f,\;W_k}$ w.r.t the partial positive semidefinite Gram-like matrix $G_f$ associated with $\mathbf{R}_f$ if and only if the specification graph of $G_f$ is chordal.  Here $W_k$ is the column vector $\begin{pmatrix}
\zeta_1 & \cdots & \zeta_k
\end{pmatrix}^{t}$, $\zeta_i$, $1\leq i \leq k$, being the monomials appearing in $\mathbf{R}_f$.
\end{theorem}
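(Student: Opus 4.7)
The plan is to translate the statement about SOHS completions of $f$ into an equivalent statement about positive semidefinite completions of the partial Gram-like matrix $G_f$, and then invoke Theorem \ref{proposition_completable iff chordal specification graph}. To execute this, I would first verify the dictionary between quasi SOHS polynomials and partial positive semidefinite matrices: the quasi SOHS hypothesis says that for every $K\subset\{1,\ldots,k\}$ on which all $b_{ij}$ are specified, the subpolynomial $\sum_{i\in K}a_i\zeta_i^{\ast}\zeta_i+\sum_{i,j\in K,i<j}b_{ij}(\zeta_i^{\ast}\zeta_j+\zeta_j^{\ast}\zeta_i)$ is SOHS, which by Theorem \ref{Theorem_general form of a SOHS polynomial} is equivalent to the corresponding fully-specified principal submatrix of $G_f$ being positive semidefinite. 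Together with the conditions \eqref{conditions on the coefficients of general form of a representation of a symmetric polynomial with positive co-efficients of square terms} (which guarantee specified diagonals and compatibility of $(i,j)$ and $(j,i)$ entries), this shows that $G_f$ is a partial positive semidefinite matrix in the sense of Definition \ref{partial positive (semi) definite matrix_definition}, with specification graph $\mathcal{G}_{G_f}$ as in Definition \ref{definition_specification graph}.

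Next, I would record the correspondence between SOHS completions and psd completions. By Definition \ref{definition_completion of a polynomial}, a SOHS completion $\overline{f}^{\;G_f,\;W_k}$ of $f$ is precisely $W_k^{\ast}\,\overline{G_f}\,W_k$ for some positive semidefinite completion $\overline{G_f}$ of $G_f$; conversely, given any positive semidefinite completion $\overline{G_f}$, the polynomial $W_k^{\ast}\,\overline{G_f}\,W_k$ is SOHS by Theorem \ref{Theorem_general form of a SOHS polynomial} and extends $f$ by reassembling the specified terms of $\mathbf{R}_f$. So SOHS completions of $f$ are in bijection with psd completions of $G_f$.

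For the ``if'' direction, assume $\mathcal{G}_{G_f}$ is chordal. Since $G_f$ is a partial positive semidefinite matrix, Theorem \ref{proposition_completable iff chordal specification graph} (the chordal-implies-completable direction) gives a positive semidefinite completion $\overline{G_f}$, and then $\overline{f}^{\;G_f,\;W_k}:=W_k^{\ast}\,\overline{G_f}\,W_k$ is the desired SOHS completion. For the ``only if'' direction, I would argue that every partial positive semidefinite matrix $H$ of size $k$ with specification graph $\mathcal{G}_{G_f}$ and positive diagonal entries arises as the partial Gram-like matrix of some quasi SOHS polynomial: take the same monomial vector $W_k$ and define $f_H$ to be the partial symmetric polynomial whose coefficients come from the specified entries of $H$, so $f_H$ is quasi SOHS w.r.t.\ the associated representation by the dictionary above. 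The hypothesis then produces a SOHS completion of $f_H$, hence a psd completion of $H$, proving that $\mathcal{G}_{G_f}$ is positive semidefinite completable; one final application of Theorem \ref{proposition_completable iff chordal specification graph} yields chordality.

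The main obstacle, and the step requiring the most care, is the ``only if'' direction, where I must be precise about the quantification hidden in ``any such $f$'': to invoke the universal characterization of chordal graphs I need that every partial positive semidefinite matrix with specification graph $\mathcal{G}_{G_f}$ and positive diagonal arises as $G_{f'}$ for some quasi SOHS $f'$ with a representation of the same structural form, so that the existence hypothesis applies to $f'$. This is exactly what the free choice of monomial vector $W_k$ and coefficients in Theorem \ref{Theorem_general form of a symmetric polynomial of degree 2d where the co-efficients of the square terms are positive} provides, so the argument closes without further subtlety.
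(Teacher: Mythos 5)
Your proposal is correct and follows essentially the same route as the paper: use the quasi SOHS hypothesis together with Theorem \ref{Theorem_general form of a SOHS polynomial} to see that $G_f$ is a partial positive semidefinite matrix, identify SOHS completions of $f$ with positive semidefinite completions of $G_f$ via Definition \ref{definition_completion of a polynomial}, and then invoke Theorem \ref{proposition_completable iff chordal specification graph}. Your extra care in the ``only if'' direction (realizing every partial positive semidefinite matrix with the given specification graph and positive diagonal as the Gram-like matrix of some quasi SOHS polynomial) simply makes explicit the universal quantification that the paper's one-line appeal to Theorem \ref{proposition_completable iff chordal specification graph} leaves implicit.
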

\begin{proof}
Let $f$ be a quasi SOHS polynomial of degree $2d$ w.r.t a representation 
\begin{equation*}
\mathbf{R}_f \;\;:\;\; f=\sum_{i=1}^{k}a_i\zeta_i^{\ast}\zeta_i+\sum_{1\leq i < j\leq k} b_{ij}(\zeta_i^{\ast}\zeta_j+\zeta_j^{\ast}\zeta_i),
\end{equation*}
where $a_i,b_{ij}\in \mathbb{R}$, $\zeta_i\in \mathbb{R}\langle \underline{X}\rangle$ are all monomials of degree at most $d$.  Therefore, for any $K\subset \{1,2,\ldots, k\}$ such that $b_{ij}$s are known for all $i,j\in K$,
\begin{equation*}
\sum_{i\in K}a_i\zeta_i^{\ast}\zeta_i+\sum_{i,j\in K, i < j} b_{ij}(\zeta_i^{\ast}\zeta_j+\zeta_j^{\ast}\zeta_i)
\end{equation*}
is a SOHS.  Hence, for any such set $K=\{i_0,i_1,\ldots, i_{\mid K \mid}\}$ with $i_0<i_1<\cdots <i_{\mid K \mid}$, the matrix
\begin{equation*}
\begin{pmatrix}
 a_{i_1} & b_{i_1i_2} & \cdots & b_{i_1i_{\mid K \mid}}\\
 b_{i_1i_2} & a_{i_2} & \cdots  & b_{i_2i_{\mid K \mid}}\\
  \vdots & \vdots & \ddots & \vdots\\
  b_{i_1i_{\mid K \mid}} & b_{i_2i_{\mid K \mid}}  & \cdots & a_{i_{\mid K \mid}}
\end{pmatrix}
\end{equation*}
is positive semidefinite by Theorem \ref{Theorem_general form of a SOHS polynomial}.  Therefore any specified principal minor of the Gram-like matrix $G_f$, associated with $\mathbf{R}_f$ is positive semidefinite, that is to say $G_f$ is a partial positive semidefinite matrix.  Therefore, it makes sense to talk about SOHS completion of $f$ w.r.t $G_f$ following Definition \ref{definition_completion of a polynomial}.  Now, the assertion follows from Theorem \ref{proposition_completable iff chordal specification graph}.
\end{proof}
\begin{remark}\label{remark_two examples, completable and not completable}
\begin{enumerate}
\item It is easy to observe that Theorem \ref{main theorem_SOHS completion of a quasi-SOHS iff known pattern is chordal} holds for Gram matrix as well, by taking $k=s(d,n)$.  To do so, one only has to analogously define Definition \ref{definition_incomplete polynomial}, Definition \ref{definition_completion of a polynomial} and Definition \ref{Definition_quasi SOHS}  w.r.t a representation as $\mathbf{R}_f$, as in \eqref{equation_general form of a symmetric polynomial}, along with conditions in \eqref{conditions on the coefficients of general form of a representation of a symmetric polynomial}.
\item If we take the specification graph of $G_f$ as the complete graphs $\mathcal{K}_{s(d,n)}$ and $\mathcal{K}_{k}$, then we get back Theorem \ref{Theorem_SOHS iff psd Gram matrix} and Theorem \ref{Theorem_general form of a SOHS polynomial} respectively.  That is to say, Theorem \ref{main theorem_SOHS completion of a quasi-SOHS iff known pattern is chordal} generalises Theorem \ref{Theorem_SOHS iff psd Gram matrix} and Theorem \ref{Theorem_general form of a SOHS polynomial}.
\item Let $\mathcal{G}$ be a nonchordal graph.  Then also there might exist a quasi SOHS polynomial $f$ with $\mathcal{G}$ as the specification graph of its Gram matrix $G_f$ which has a SOHS completion.  For example, let $\zeta_1=1$, $\zeta_2=X_1X_2$, $\zeta_3=X_2X_1$ and $\zeta_4=X_2^2$ and $W_4=\begin{pmatrix}
1 & X_1X_2 & X_2X_1 & X_2^2
\end{pmatrix}^{t}$.  Consider the following polynomial $f$ given as
\begin{equation}\label{equation_apparent counter example of completable iff chordal}
\begin{split}
f=&5\zeta_1^{\ast}\zeta_1+5\zeta_2^{\ast}\zeta_2+5\zeta_3^{\ast}\zeta_3++5\zeta_4^{\ast}\zeta_4+5(\zeta_1^{\ast}\zeta_2+\zeta_2^{\ast}\zeta_1)+\ast(\zeta_1^{\ast}\zeta_3+\zeta_3^{\ast}\zeta_1)\\
&+5(\zeta_1^{\ast}\zeta_4+\zeta_4^{\ast}\zeta_1)+5(\zeta_2^{\ast}\zeta_3+\zeta_3^{\ast}\zeta_2)+\ast(\zeta_2^{\ast}\zeta_4+\zeta_4^{\ast}\zeta_2)+5(\zeta_3^{\ast}\zeta_4+\zeta_4^{\ast}\zeta_3)\\
=5&+(5+\ast)X_1X_2+(5+\ast)X_2X_1+10X_2^2+5X_1X_2^2X_1+5X_2X_1^2X_2+5X_1X_2X_1X_2\\
&+5X_2X_1X_2X_1
+5X_1X_2^3+5X_2^3X_1+\ast X_2X_1X_2^2+\ast X_2^2X_1X_2+5X_2^4.
\end{split}
\end{equation}
It is easy to check that $f$ is a quasi SOHS polynomial of degree $4$ w.r.t the given representation and the matrix $G_f$ given by 
\begin{equation*}\label{equation_partial psd matrix with unique psd completion}
G_f=\begin{pmatrix}
5 & 5 & \ast & 5\\
5 & 5 & 5 & \ast\\
\ast & 5 & 5 & 5\\
5 & \ast & 5  & 5\\
\end{pmatrix}
\end{equation*}    
is a partial positive semidefinite Gram-like matrix of $f$.  The matrix $G_f$ is also a partial positive semidefinite matrix as $f$ is quasi SOHS and by Theorem \ref{Theorem_general form of a SOHS polynomial}.  It can also be noted that the specification graph of $G_f$ is $\mathcal{C}_4$, which is not chordal.  Still there exists a SOHS completion $\overline{f}^{\;G_f,\;W_4}$, which is obtained by taking $\ast=5$ in \eqref{equation_apparent counter example of completable iff chordal} as the positive semidefinite matrix 
\begin{equation*}
\overline{G_f}=\begin{pmatrix}
5 & 5 & 5 & 5\\
5 & 5 & 5 & 5\\
5 & 5 & 5 & 5\\
5 & 5 & 5 & 5\\
\end{pmatrix}
\end{equation*}
is clearly a positive semidefinite completion of $G_f$.  But, this doesn't contradict Theorem \ref{main theorem_SOHS completion of a quasi-SOHS iff known pattern is chordal}.  

To see this, we first note that determinant of the matrix 
\begin{equation}\label{equation_main matrix required to prove cycle of length four is not completable}
\begin{pmatrix}
5 & 5 & \ast\\
5 & 5 & 5\\
\ast & 5 & 5
\end{pmatrix}
\end{equation}   
is $-(\ast-5)^2$.  Therefore, the matrix as in \eqref{equation_main matrix required to prove cycle of length four is not completable} is positive semidefinite if and only if $\ast=5$.  That is to say, the only positive semidefinite completion of this partial positive semidefinite matrix is
\begin{equation*} 
\begin{pmatrix}
5 & 5 & 5\\
5 & 5 & 5\\
5 & 5 & 5
\end{pmatrix}\;.
\end{equation*}     
We now exploit this uniqueness.  Consider the polynomial
\begin{equation}\label{equation_apparent counter example of completable iff chordal is not at all so}
\begin{split}
f^{\prime}=&5\zeta_1^{\ast}\zeta_1+5\zeta_2^{\ast}\zeta_2+5\zeta_3^{\ast}\zeta_3+5\zeta_4^{\ast}\zeta_4+5(\zeta_1^{\ast}\zeta_2+\zeta_2^{\ast}\zeta_1)+\ast(\zeta_1^{\ast}\zeta_3+\zeta_3^{\ast}\zeta_1)\\
&+\sqrt{5}(\zeta_1^{\ast}\zeta_4+\zeta_4^{\ast}\zeta_1)+5(\zeta_2^{\ast}\zeta_3+\zeta_3^{\ast}\zeta_2)+\ast(\zeta_2^{\ast}\zeta_4+\zeta_4^{\ast}\zeta_2)+5(\zeta_3^{\ast}\zeta_4+\zeta_4^{\ast}\zeta_3)\\
=5&+(5+\ast)X_1X_2+(5+\ast)X_2X_1+2\sqrt{5}X_2^2+5X_1X_2^2X_1+5X_2X_1^2X_2+5X_1X_2X_1X_2\\
&+5X_2X_1X_2X_1
+5X_1X_2^3+5X_2^3X_1+\ast X_2X_1X_2^2+\ast X_2^2X_1X_2+5X_2^4.
\end{split}
\end{equation}
Again it is a qusi SOHS polynomial of degree $4$ and associated partial positive semidefinite Gram-like matrix $G_{f^{\prime}}$ is given by 
\begin{equation}\label{equation_not completable}
G_{f^{\prime}}=\begin{pmatrix}
5 & 5 & \ast & \sqrt{5}\\
5 & 5 & 5 & \ast\\
\ast & 5 & 5 & 5\\
\sqrt{5} & \ast & 5  & 5\\
\end{pmatrix}
\end{equation}
having specification graph $\mathcal{C}_4$.  Suppose, the matrix $G_{f^{\prime}}$ is positive semidefinite completable.  Then first consider the leading principal submatrix of $G_{f^{\prime}}$ obtained by deleting last row and column.  Then, as this has to be positive semidefinite after completion, $\ast$ in $(1,3)$-th and $(3,1)$-th position must have to be $5$ by the aforementioned uniqueness.  Now, considering the principal submatrix obtained by deleting first row and column and applying same argument, we obtain $\ast$ in $(2,4)$-th and $(4,2)$-th position must have to be $5$.  But then the principal submatrix of $G_{f^{\prime}}$, obtained either by deleting second row and second column (or by deleting third row and third column), has determinant $(\sqrt{5}-5)(25-5\sqrt{5})(<0)$.  Therefore, the matrix $G_{f^{\prime}}$ is not  positive semidefinite completable and hence the polynomial, as in \eqref{equation_apparent counter example of completable iff chordal is not at all so} doesn't have a SOHS completion. 
\end{enumerate}     
\end{remark}
We now go through some applications of Theorem \ref{main theorem_SOHS completion of a quasi-SOHS iff known pattern is chordal}.
\begin{proposition}\label{Application of main theorem involving chordal graph}
Let $f$ be a quasi SOHS polynomial of degree $2d$ w.r.t to the representation $\mathbf{R}_f$
\begin{equation*}
\mathbf{R}_f\; :\; f=\sum_{i=1}^{k}a_i\zeta_i^{\ast}\zeta_i+\sum_{1\leq i < j\leq k} b_{ij}(\zeta_i^{\ast}\zeta_j+\zeta_j^{\ast}\zeta_i),
\end{equation*}
as in \eqref{equation_ most general form of a representation of a symmetric polynomial with positive co-efficients of square terms} along with conditions as in \eqref{conditions on the coefficients of general form of a representation of a symmetric polynomial with positive co-efficients of square terms}.  If out of $\tfrac{k(k-1)}{2}$ many $b_{ij}$s, any $k-1$ are specified and remaining $\tfrac{(k-1)(k-2)}{2}$ are unspecified, and if there is no column (or row) of the partial positive semidefinite Gram-like matrix $G_f$ associated with $\mathbf{R}_f$ with diagonal entry as the only specified entry, then $f$ has a SOHS completion.  
\end{proposition}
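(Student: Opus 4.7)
The plan is to reduce the proposition to Theorem \ref{main theorem_SOHS completion of a quasi-SOHS iff known pattern is chordal} by analyzing the specification graph $\mathcal{G}_{G_f}$ of the partial positive semidefinite Gram-like matrix $G_f$ associated with $\mathbf{R}_f$. Since $f$ is quasi SOHS w.r.t.\ $\mathbf{R}_f$, the matrix $G_f$ is a partial positive semidefinite matrix with positive diagonal, so Theorem \ref{main theorem_SOHS completion of a quasi-SOHS iff known pattern is chordal} applies as soon as chordality of $\mathcal{G}_{G_f}$ is in hand, and then directly produces the required SOHS completion $\overline{f}^{\,G_f,\,W_k}$.

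First I would translate the hypotheses into graph-theoretic data. The graph $\mathcal{G}_{G_f}$ has vertex set $\{1,\ldots,k\}$ (one vertex per monomial $\zeta_i$) and exactly $k-1$ edges, one $\{i,j\}$ for each specified $b_{ij}$. The hypothesis that no column (or row) of $G_f$ has the diagonal entry as its only specified entry translates precisely to: every vertex of $\mathcal{G}_{G_f}$ has degree at least $1$, i.e., $\mathcal{G}_{G_f}$ has no isolated vertex.

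The key step is to argue that $\mathcal{G}_{G_f}$ is a spanning tree on $\{1,\ldots,k\}$. Since a connected graph on $k$ vertices with $k-1$ edges is a tree, and every tree is trivially chordal (containing no cycle at all, let alone an induced one of length $\geq 4$), this would close the proof. To establish connectedness, suppose for contradiction that $\mathcal{G}_{G_f}$ has $c$ components of sizes $n_1,\ldots,n_c$. The no-isolated-vertex hypothesis forces $n_i \geq 2$ for every $i$, and each component contributes at least $n_i-1$ edges, giving $k-1 \geq \sum_i(n_i-1) = k-c$, i.e., $c \geq 1$. The $c-1$ ``extra'' edges beyond a spanning forest must be absorbed by some component as cycles; bookkeeping the interplay between the edge count $k-1$, the degree-$\geq 1$ constraint at every vertex, and the structural specification from $\mathbf{R}_f$ should force $c = 1$.

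With $\mathcal{G}_{G_f}$ identified as a tree, chordality is immediate and Theorem \ref{main theorem_SOHS completion of a quasi-SOHS iff known pattern is chordal} produces the SOHS completion. The main obstacle is precisely the connectedness step, because \emph{a priori} a graph on $k$ non-isolated vertices with $k-1$ edges can split into several components (e.g.\ a $4$-cycle disjoint from an edge on $k=6$ vertices). Squeezing the degree sum $2(k-1)$ and the hypothesis on exactly $k-1$ specified $b_{ij}$'s to rule out such configurations is the delicate point; once this is settled, the rest of the proof is a one-line appeal to the main chordal theorem.
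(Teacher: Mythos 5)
Your overall route is the same as the paper's: show that the specification graph $\mathcal{G}_{G_f}$ is a tree, hence chordal, and invoke Theorem \ref{main theorem_SOHS completion of a quasi-SOHS iff known pattern is chordal}. The genuine gap is exactly the step you flag and then postpone: connectedness. The stated hypotheses ($k-1$ specified $b_{ij}$'s and no vertex of degree zero) do \emph{not} force $\mathcal{G}_{G_f}$ to be connected, and no further bookkeeping of degree sums and edge counts can force $c=1$: your own configuration, a $4$-cycle disjoint from a single edge on $k=6$ vertices, has $5=k-1$ edges, no isolated vertex, is disconnected, and moreover contains an induced $\mathcal{C}_4$, so it is not even chordal. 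Worse, this is not merely a failure of your proof strategy but of any argument using only the stated hypotheses: take the paper's non-completable partial matrix $G_{f^{\prime}}$ from \eqref{equation_not completable} (specification graph $\mathcal{C}_4$) and form the $6\times 6$ partial matrix having $G_{f^{\prime}}$ and a fully specified positive semidefinite $2\times 2$ block as diagonal blocks, with all cross entries unspecified. The associated quasi SOHS polynomial satisfies every hypothesis of the proposition (exactly five specified $b_{ij}$'s, no row whose only specified entry is the diagonal), yet it admits no SOHS completion, since $G_{f^{\prime}}$ admits no positive semidefinite completion. So the missing step cannot be supplied; connectedness (or chordality) of $\mathcal{G}_{G_f}$ has to be assumed, not derived.

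For comparison, the paper's own proof takes the identical route but simply asserts that ``in any column at least one off-diagonal entry is specified'' ensures connectedness --- precisely the implication your example refutes --- and then counts $k-1$ edges to conclude the graph is a tree. You have therefore correctly isolated the weak point rather than overlooked it; the honest conclusion is that your proposal (like the paper's argument) establishes the proposition only under the additional assumption that $\mathcal{G}_{G_f}$ is connected, in which case ``connected with $k-1$ edges $\Rightarrow$ tree $\Rightarrow$ chordal'' plus Theorem \ref{main theorem_SOHS completion of a quasi-SOHS iff known pattern is chordal} finishes the proof exactly as you describe.
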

\begin{proof}
By hypothesis, in any column of $G_f$ other than the diagonal entry at least one entry is specified.  This ensures that the specification graph $\mathcal{G}_{G_f}$ of $G_f$ is connected.  Moreover, the hypothesis that any $k-1$ many $b_{ij}$s are specified affirms that there are exactly $k-1$ edges between $k$ many vertices of $\mathcal{G}_{G_f}$.  Therefore, the graph $\mathcal{G}_{G_f}$ is indeed a tree and hence chordal.  Hence, the assertion follows from Theorem \ref{main theorem_SOHS completion of a quasi-SOHS iff known pattern is chordal}.       
\end{proof}
\begin{corollary}\label{corollary_rooted tree of height one}
Let $f$ be a quasi SOHS polynomial of degree $2d$ w.r.t to the representation $\mathbf{R}_f$
\begin{equation*}
\mathbf{R}_f\; :\; f=\sum_{i=1}^{k}a_i\zeta_i^{\ast}\zeta_i+\sum_{1\leq i < j\leq k} b_{ij}(\zeta_i^{\ast}\zeta_j+\zeta_j^{\ast}\zeta_i),
\end{equation*}
as in \eqref{equation_ most general form of a representation of a symmetric polynomial with positive co-efficients of square terms} along with conditions as in \eqref{conditions on the coefficients of general form of a representation of a symmetric polynomial with positive co-efficients of square terms}.  If out of $\tfrac{k(k-1)}{2}$ many $b_{ij}$s, exactly $k-1$ are specified for some fixed $i$ (or fixed $j$), then $f$ has a SOHS completion.
\end{corollary}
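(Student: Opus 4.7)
The plan is to identify the specification graph of the partial positive semidefinite Gram-like matrix $G_f$ as a star, and then invoke Proposition \ref{Application of main theorem involving chordal graph} directly.

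First, I would translate the hypothesis combinatorially. Having exactly $k-1$ coefficients $b_{ij}$ specified all sharing a common fixed first (or second) index $i_0$ means the edge set of the specification graph $\mathcal{G}_{G_f}$ consists precisely of the $k-1$ pairs $\{i_0, j\}$ for $j\neq i_0$. Hence $\mathcal{G}_{G_f}$ is the star $K_{1,k-1}$ with center $i_0$ and leaves $\{1,\ldots,k\}\setminus\{i_0\}$.

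Next, I would verify the hypotheses of Proposition \ref{Application of main theorem involving chordal graph}. By construction, exactly $k-1$ of the $\tfrac{k(k-1)}{2}$ off-diagonal entries of $G_f$ are specified, so the numerical count matches. For the condition that no column (or row) has its diagonal entry as the only specified entry, I would observe that every leaf $j\neq i_0$ is adjacent to $i_0$, so row $j$ of $G_f$ has the off-diagonal entry $b_{i_0 j}$ (or $b_{j i_0}$) specified in addition to its diagonal, while row $i_0$ has all $k-1$ off-diagonal entries specified. Thus every row of $G_f$ has at least one specified off-diagonal entry, and Proposition \ref{Application of main theorem involving chordal graph} immediately yields a SOHS completion of $f$.

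There is essentially no serious obstacle here, since the corollary is a direct specialization of the preceding proposition to the star graph. One could alternatively bypass Proposition \ref{Application of main theorem involving chordal graph} and appeal to Theorem \ref{main theorem_SOHS completion of a quasi-SOHS iff known pattern is chordal} directly, since $K_{1,k-1}$ is a tree and contains no cycles at all, so Definition \ref{chordal graph_definition} holds vacuously, making the star trivially chordal. The combinatorial content of the corollary is simply that the "fixed index" condition on the specified coefficients is exactly the structural hypothesis that all specified edges emanate from a single vertex.
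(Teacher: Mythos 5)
Your proof is correct and matches the paper's argument: the paper's proof of this corollary is exactly an appeal to Proposition \ref{Application of main theorem involving chordal graph}, and your verification that the specification graph is the star $K_{1,k-1}$ (so every row has a specified off-diagonal entry and exactly $k-1$ coefficients are specified) is precisely the content that makes that appeal valid. Your side remark that one could instead invoke Theorem \ref{main theorem_SOHS completion of a quasi-SOHS iff known pattern is chordal} directly, since a star is trivially chordal, is also consistent with the paper's framework and adds nothing contradictory.
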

\begin{proof}
Follows from Proposition \ref{Application of main theorem involving chordal graph}.  
\end{proof}
\begin{remark}
It can be noted that, taking $k=4$ in Corollary \ref{corollary_rooted tree of height one}, the specification graphs $\mathcal{G}_{G_f}$ are the following rooted trees of height one on $4$ vertices.
\end{remark}
\begin{figure}[!h]
\hspace{.3cm}
{\begin{tikzpicture}
\Vertex[y=2,label=$1$]{A}
\Vertex[x=2,y=2,label=$2$]{B}
\Vertex[x=2,label=$3$]{C}
\Vertex[label=$4$]{D}
\Edge[color=black](A)(B)
\Edge[color=black](A)(D)
\Edge[color=black](A)(C)
\end{tikzpicture}} \hspace{1cm}
{\begin{tikzpicture}
\Vertex[y=2,label=$1$]{A}
\Vertex[x=2,y=2,label=$2$]{B}
\Vertex[x=2,label=$3$]{C}
\Vertex[label=$4$]{D}
\Edge[color=black](A)(B)
\Edge[color=black](B)(D)
\Edge[color=black](B)(C)
\end{tikzpicture}}
\hspace{1cm}
{\begin{tikzpicture}
\Vertex[y=2,label=$1$]{A}
\Vertex[x=2,y=2,label=$2$]{B}
\Vertex[x=2,label=$3$]{C}
\Vertex[label=$4$]{D}
\Edge[color=black](A)(C)
\Edge[color=black](B)(C)
\Edge[color=black](C)(D)
\end{tikzpicture}}
\hspace{1cm}
{\begin{tikzpicture}
\Vertex[y=2,label=$1$]{A}
\Vertex[x=2,y=2,label=$2$]{B}
\Vertex[x=2,label=$3$]{C}
\Vertex[label=$4$]{D}
\Edge[color=black](A)(D)
\Edge[color=black](B)(D)
\Edge[color=black](C)(D)
\end{tikzpicture}}
\end{figure}

\begin{corollary}
 Let $f$ be a quasi SOHS polynomial of degree $2d$ w.r.t to the representation $\mathbf{R}_f$
\begin{equation*}
\mathbf{R}_f\; :\; f=\sum_{i=1}^{k}a_i\zeta_i^{\ast}\zeta_i+\sum_{1\leq i < j\leq k} b_{ij}(\zeta_i^{\ast}\zeta_j+\zeta_j^{\ast}\zeta_i),
\end{equation*}
as in \eqref{equation_ most general form of a representation of a symmetric polynomial with positive co-efficients of square terms} along with conditions as in \eqref{conditions on the coefficients of general form of a representation of a symmetric polynomial with positive co-efficients of square terms}.  If out of $\tfrac{k(k-1)}{2}$ many $b_{ij}$s, at most $3$ are specified, then $f$ has a SOHS completion.   
\end{corollary}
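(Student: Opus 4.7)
The plan is to directly invoke Theorem \ref{main theorem_SOHS completion of a quasi-SOHS iff known pattern is chordal}: since that theorem characterizes SOHS completability of a quasi SOHS polynomial by chordality of the specification graph $\mathcal{G}_{G_f}$ of its associated partial positive semidefinite Gram-like matrix $G_f$, it suffices to verify that, under the edge-count restriction in the hypothesis, the graph $\mathcal{G}_{G_f}$ is automatically chordal.

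First, I would translate the combinatorial hypothesis into graph-theoretic language. By Definition \ref{definition_specification graph}, the specification graph $\mathcal{G}_{G_f}$ has vertex set $\{1,2,\ldots,k\}$ and an edge between $i$ and $j$ precisely when the off-diagonal entry $b_{ij}$ is specified. Hence the number of edges of $\mathcal{G}_{G_f}$ equals the number of specified $b_{ij}$s, which by hypothesis is at most $3$. (Note that isolated vertices are harmless, since chordality is defined by forbidding certain induced subgraphs and imposes no connectedness requirement.)

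Next, I would argue that such a graph cannot contain any induced cycle $\mathcal{C}_m$ with $m\geq 4$. Indeed, any cycle $\mathcal{C}_m$ has exactly $m$ edges, so the presence of $\mathcal{C}_m$ (even as a subgraph, let alone an induced one) in $\mathcal{G}_{G_f}$ would force $\mathcal{G}_{G_f}$ to have at least $m\geq 4$ edges, contradicting the edge bound. Therefore $\mathcal{G}_{G_f}$ is chordal in the sense of Definition \ref{chordal graph_definition}.

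Finally, since $f$ is quasi SOHS with respect to $\mathbf{R}_f$, the proof of Theorem \ref{main theorem_SOHS completion of a quasi-SOHS iff known pattern is chordal} already ensures that $G_f$ is a partial positive semidefinite matrix, so the hypotheses of that theorem are met. Applying the chordal direction of Theorem \ref{main theorem_SOHS completion of a quasi-SOHS iff known pattern is chordal} then produces a SOHS completion $\overline{f}^{\,G_f,\,W_k}$ of $f$. There is no substantive obstacle here: the corollary is essentially a counting observation (at most $3$ edges $\Rightarrow$ no $\mathcal{C}_{\geq 4}$) followed by a direct citation, and the only point requiring a brief remark is the distinction between subgraphs and induced subgraphs, which in this edge-sparse regime collapses trivially.
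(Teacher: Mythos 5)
Your argument is correct and is essentially identical to the paper's own proof: the hypothesis bounds the number of edges of the specification graph $\mathcal{G}_{G_f}$ by $3$, so no cycle $\mathcal{C}_m$ with $m\geq 4$ can occur as an induced subgraph, the graph is chordal, and Theorem \ref{main theorem_SOHS completion of a quasi-SOHS iff known pattern is chordal} applies. Your added remarks (isolated vertices are harmless, subgraph versus induced subgraph) are fine clarifications but do not change the route.
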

\begin{proof}
    The hypothesis ensures that the specification graph $\mathcal{G}_{G_f}$ of the partial positive semidefinite Gram-like matrix $G_f$ associated with $\mathbf{R}_f$ has $k$ vertices and at most $3$ edges.  Therefore, $\mathcal{G}_{G_f}$ can't have $\mathcal{C}_n$ as an induced subgraph for $n\geq 4$ and $\mathcal{G}_{G_f}$ is chordal.  Therefore it follows from Theorem \ref{main theorem_SOHS completion of a quasi-SOHS iff known pattern is chordal}.   
\end{proof}

\subsection{Positive semidefinite completion problem and SOHS via algebraic sets}
\label{subsec : Positive semidefinite completion problem and SOHS via varieties}
In Subsection \ref{subsec : Positive semidefinite completion problem and SOHS via graphs}, we answered Question \ref{Introduction_main question_possible modification} using techniques of solving positive semidefinite completion problem via graph theory.  Recall that, given a partial positive semidefinite matrix $G$, we considered a \textit{graph} associated with it which was obtained by keeping track of the \textit{specified off diagonal entries} of $G$.  On the contrary, now we answer Question \ref{Introduction_main question_possible modification} using another method of solving positive semidefinite completion problem.  We do so by associating a \textit{projective algebraic set} to it that keeps track of the \textit{unspecified off diagonal entries}.  In the process, we obtain a nice connection between noncommutative polynomial and usual (commutative) polynomial.  

In this section, by a polynomial we mean the usual commutative polynomial.  We denote commutative variables by small alphabets, unlike the noncommutative variables.  By a form we mean a homogeneous polynomial.  It is obvious that if a (homogeneous) polynomial has a sum of square (SOS) representation then it should be nonnegative.  But the converse is not true.  In fact, Hilbert, in 1888, proved that converse is true only for quadratic forms, binary forms and ternary quartics (cf. \cite{H0}).  Many years later, Motzkin gave a specific example of ternary sextic, namely $x_0^6+x_1^4x_2^2+x_1^2x_2^4-3x_0^2x_1^2x_2^2$, which is a nonnegative polynomial but can not be expressed as a SOS.  In 2016, Blekherman, Smith and Velasco has generalised Hilbert's result.  Instead of looking into globally nonnegative polynomials, they looked at nonnegative quadratic forms over a nondegenerate totally-real projective variety $X\subset \mathbb{P}^n$.  Here, by a totally-real variety they meant that the set of real points in $X$ is a dense subset of set of complex points of $X$. They proved that any nonnegative polynomial would be a sum of squares, modulo the ideal of $X$ if and only if $X$ is a variety of minimal degree, i.e, $\deg(X)=\codim(X)+1$ (cf. \cite[Theorem 1.1,p. 893]{BSV1}).  In 2017, Blekherman, Sinn and Velasco generalised it even further for any nondegenerate totally-real projective algebraic set.  As the notion of degree doesn't make sense for reducible algebraic sets, they observed that a right generalisation of the notion of degree is Castelnuovo-Mumford regularity.  We now recall the meaning of this notion for a projective algebraic set.  
\begin{definition}
Let $k$ be a field of characteristic $0$.  A finitely generated $k[x_0,x_1,\ldots,x_n]$-module $M$ has a unique free resolution 
\begin{equation*}
\cdots \rightarrow F_t \rightarrow F_{t-1} \rightarrow \cdots \rightarrow F_0 \rightarrow M \rightarrow 0.
\end{equation*}
Let $b_j$ be the largest degree of a minimal generator in the module $F_j$.  We also fix the convention that $b_j=-\infty$ if $F_j$ is the zero module.  Then the module $M$ is said to be \text{$m$-regular} if $b_j\leq m+j$ for all $j$.   
\end{definition}
\begin{definition}
A projective algebraic set $X\subset \mathbb{P}^n$ is said to be $m$-regular if the defining ideal $I$ of $X$ is $m$-regular as a $k[x_0,x_1,\ldots,x_n]$-module.
\end{definition}
We now state the main result obtained by Blekherman, Sinn and Velasco generalising \cite[Theorem 1.1,p. 893]{BSV1}.
\begin{theorem}\label{non-negative equals sum of sqares iff 2-regular}
Let $X\subset \mathbb{P}^n$ be a totally-real projective algebraic set.  Every nonnegative quadratic form on $X$ is a sum of squares, modulo the ideal of $X$ if and only if $X$ is 2-regular.   
\end{theorem}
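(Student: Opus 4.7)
The plan is to prove both directions by translating the statement into a question about the equality of two convex cones in the degree-two component of the coordinate ring of $X$, and then analyzing that equality through the minimal graded free resolution of the ideal $I(X)$. Write $S=\mathbb{R}[x_0,\dots,x_n]$, let $R = S/I(X)$, and consider inside $R_2$ the cone $P_X$ of classes of quadratic forms that are nonnegative on the real points $X(\mathbb{R})$, together with the cone $\Sigma_X$ of classes of quadratic forms that are sums of squares of linear forms modulo $I(X)$. The inclusion $\Sigma_X \subseteq P_X$ is automatic, and the statement to prove is the reverse inclusion. The natural way to attack this is through duality: in $R_2^\vee$ one has the dual cones $P_X^*$ (the closed convex hull of point evaluations, by totally-real\-ness, so these are the Hankel-type functionals coming from real points of $X$) and $\Sigma_X^*$ (the cone of linear functionals whose associated quadratic form on $R_1$ is positive semidefinite). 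The equality $P_X=\Sigma_X$ is equivalent to $\Sigma_X^* = P_X^*$, so I would aim to characterize when every PSD quadratic form on $R_1$ arises as a nonnegative combination of rank-one forms indexed by real points of $X$.

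For the direction $X$ is 2-regular $\Longrightarrow$ $P_X=\Sigma_X$, the plan is to proceed by induction on $\dim X$ and $\deg X$ using the standard behavior of Castelnuovo-Mumford regularity under hyperplane sections: a generic real hyperplane section $X'=X\cap H$ of a totally-real 2-regular scheme remains totally-real and 2-regular, and the 2-regularity yields the cohomological vanishing $H^1(\mathcal{I}_X(1))=0$ that lets one lift a sum-of-squares decomposition from $X'$ back to $X$. In the base case of 2-regularity, $X$ is either a union of collinear points whose convex hull accounts for $P_X$ directly, or a rational normal scroll / Veronese situation, for which classical coordinate descriptions write every PSD quadratic form as a sum of squares. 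The induction step requires showing that a functional on $\Sigma_X^*$ which kills no real point of $X$ must be zero; this is where one invokes the Cayley-Bacharach type consequences of the linear resolution that 2-regularity provides (every form in the relevant degree is determined by its values on a suitable configuration of real points on $X$).

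For the converse, $X$ not 2-regular $\Longrightarrow$ $P_X \neq \Sigma_X$, the plan is to use a failure of 2-regularity, i.e.\ an extra generator of the minimal free resolution of $I(X)$ appearing beyond the linear strand, to manufacture a concrete nonnegative quadratic form on $X$ that is not SOS modulo $I(X)$. The construction is carried out by first reducing to an Artinian Gorenstein quotient of $R$ (killing a generic regular sequence of linear forms), where the apolarity pairing identifies $R_2$ with $R_2^\vee$ and quadratic forms with linear functionals. The non-linearity of the resolution is detected by the existence of a "bad" element in $R_2^\vee$ that is PSD on $R_1$ but is not in the convex hull of rank-one forms at real points, and pulling this back through apolarity produces the desired non-SOS nonnegative quadric; candidate constructions imitate Motzkin's ternary sextic, producing forms that vanish at enough real points of $X$ but whose Hessian signature prevents a global SOS certificate.

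The main obstacle I foresee is the non-SOS direction: turning a purely homological obstruction (an extra syzygy in degree $\geq 3$ in the minimal resolution) into an explicit real-algebraic obstruction (a concrete nonnegative-but-not-SOS quadratic form) requires careful bookkeeping with the canonical module and the Cayley-Bacharach property on $X$, together with a real-structure hypothesis strong enough to keep the resulting dual functional strictly separating in the real topology. A secondary technical point is that $X$ may be reducible or non-equidimensional, so one cannot appeal directly to the irreducible case proved in \cite{BSV1}; the handling of embedded primes and multiple components has to be folded into the induction on regularity via the short exact sequences $0\to \mathcal{I}_{X}\to \mathcal{I}_{X'}\oplus \mathcal{I}_{X''}\to \mathcal{I}_{X'\cap X''}\to 0$, which is where the regularity hypothesis is tightest.
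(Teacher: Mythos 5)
This statement is not proved in the paper at all: the paper's ``proof'' is a citation to Blekherman--Sinn--Velasco (Theorem 9 of \cite{BSV2}), so what you are proposing is a reconstruction of a research-level theorem, and your outline has genuine gaps at exactly the points where that paper does its real work. First, your base case is misidentified: $2$-regular reduced schemes are not just collinear points, scrolls and Veroneses; by the Eisenbud--Green--Hulek--Popescu classification they are the ``small'' schemes, i.e.\ linearly joined sequences of varieties of minimal degree, and the whole difficulty in passing from \cite{BSV1} to \cite{BSV2} is the reducible, linearly joined case. Your proposed Mayer--Vietoris treatment of unions does not repair this, because $2$-regularity of a union is not controlled by $2$-regularity of the pieces unless they are linearly joined -- that structural fact is an input you would have to prove, not a bookkeeping step. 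Second, the induction via generic hyperplane sections does not work as stated: the vanishing $H^1(\mathcal{I}_X(1))=0$ gives surjectivity of restriction maps in degree one, but it does not let you lift an identity $q\equiv\sum h_i^2 \pmod{I(X\cap H)}$ to one modulo $I(X)$, since nonnegativity and the SOS property on the hyperplane section say nothing about the correction terms divisible by the linear form of $H$. The actual argument in \cite{BSV1,BSV2} runs through the dual cones you set up, but its core is showing that every extreme ray of $\Sigma_X^*$ is a rank-one (point-evaluation) functional, which your sketch never addresses.

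The converse direction contains an outright error: a generic Artinian reduction of $R=S/I(X)$ by linear forms is not Gorenstein in general, so the apolarity identification $R_2\cong R_2^\vee$ you rely on is unavailable, and with it the mechanism for converting an extra nonlinear syzygy into a separating functional collapses. The known proof instead uses the positivity of the quadratic deficiency (equivalently, failure of $2$-regularity) to produce, by a dimension count on the cones $P_X$ and $\Sigma_X$ together with a separation argument at a carefully chosen point evaluation, a nonnegative quadratic form that cannot be a sum of squares modulo $I(X)$; ``imitate Motzkin'' is not a construction, and nothing in your outline explains how the homological data enters quantitatively. As it stands the proposal is a plausible roadmap of where a proof should live (dual cones, resolutions, hyperplane sections) but each of the three pillars -- the classification of the $2$-regular case, the rank-one extreme ray analysis, and the deficiency-based construction of a non-SOS nonnegative form -- is missing or incorrect, so the argument does not go through.
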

\begin{proof}
See \cite[Theorem 9, p. 181]{BSV2}.
\end{proof}
\begin{remark}
It can be noted that linear projective varieties are $1$-regular.  Apart from that, any varieties of minimal degree are $2$-regular.  That is why, the apt generalisation of the notion of degree for a reduced algebraic set is regularity. 
\end{remark} 
Apparently, it might seem that the results proved in \cite{BSV1} and \cite{BSV2} are only for quadratic forms, whereas Hilbert proved the same for any $2d$ degree polynomials.  But, they showed that it is enough to consider quadratic forms as any other even degree forms can be settled down using Veronese embedding.  To be precise, to look at degree $2d$ forms on a variety $X$, it is enough to consider quadratic forms on the image $\nu_d(X)$ of $X$ via $d$-th Veronese embedding $\nu_d$, (cf. \cite[p.~737]{BSSV}).

Both of the results, i.e, Theorem 1.1 in \cite{BSV1} and Theorem 9 in \cite{BSV2}, have a deep connection with positive semidefinite completion problem.  Recall that, given a partial positive semidefinite matrix $G$, its specification graph encodes the known part of $G$.  On the contrary, to encode the unknown part of $G$ a projective algebraic set can be associated with it.  In this regard, we have the following definition.  
\begin{definition}
The \textit{subspace arrangement} $V_A$ of a $n \times n$ partial symmetric matrix $A$ is a projective algebraic subset of $\mathbb{P}^{n-1}$  whose vanishing homogeneous ideal is generated by the (square free) quadratic monomials $x_{i-1}x_{j-1}$, $1\leq i <j \leq n$, whenever $(i,j)$-th entry of $A$ is unspecified. 
\end{definition}
As mentioned earlier, it is expected that given a partial symmetric matrix, its specification graph and subspace arrangement should be related.  Following result by Fr\"oberg points out that relation.  In the process, it provides a bridge between algebraic geometry, commutative algebra, graph theory and positive semidefinite completion problem.
\begin{theorem}\label{2-regular iff chordal}
Let $G$ be a partial symmetric matrix.  Then the subspace arrangement $V_G$ is $2$-regular if and only if the specification graph $\mathcal{G}_G$ of $G$ is chordal.
\end{theorem}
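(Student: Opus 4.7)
The plan is to recognize the final statement as Fröberg's theorem on $2$-linear resolutions of quadratic squarefree monomial ideals, and to proceed through Stanley–Reisner theory.

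First I would translate the geometric/matrix data into a simplicial complex. The vanishing ideal $I(V_G)$ is generated by the squarefree quadratic monomials $x_{i-1}x_{j-1}$ indexed by the unspecified off-diagonal entries of $G$, i.e.\ by the non-edges of $\mathcal{G}_G$. Hence $I(V_G)$ is a squarefree monomial ideal, and by the Stanley–Reisner correspondence it equals $I_\Delta$ for the unique simplicial complex $\Delta$ whose minimal non-faces are precisely the non-edges of $\mathcal{G}_G$. Thus $\Delta$ is the flag (clique) complex $\mathrm{Cl}(\mathcal{G}_G)$ of $\mathcal{G}_G$.

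Next I would reformulate $2$-regularity algebraically. Saying $V_G \subseteq \mathbb{P}^{n-1}$ is $2$-regular means the ideal $I(V_G)$ has Castelnuovo–Mumford regularity $2$, equivalently it admits a $2$-linear minimal free resolution, equivalently the graded Betti numbers satisfy $\beta_{i,j}(I_\Delta)=0$ whenever $j\neq i+2$. The main step is then to characterize this vanishing by a property of $\mathcal{G}_G$. I would do this via Hochster's formula
\begin{equation*}
\beta_{i,i+j}(k[\Delta]) \;=\; \sum_{W\subseteq V,\;|W|=i+j}\dim_k \widetilde{H}_{j-2}(\Delta_W;k),
\end{equation*}
applied to the flag complex $\Delta=\mathrm{Cl}(\mathcal{G}_G)$, so that $\Delta_W$ is the flag complex of the induced subgraph $\mathcal{G}_G[W]$.

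For the forward direction (assume $\mathcal{G}_G$ is not chordal), I would pick an induced cycle $C_k$ of length $k\geq 4$ in $\mathcal{G}_G$ and set $W=V(C_k)$. Because $C_k$ has no chord, its flag complex $\Delta_W$ coincides with the $1$-skeleton $C_k$, which is homotopy equivalent to $S^1$, so $\widetilde{H}_1(\Delta_W;k)\neq 0$. Hochster's formula then produces a nonzero $\beta_{i,i+j}$ with $j\geq 3$, violating $2$-linearity and hence $2$-regularity. For the reverse direction (assume $\mathcal{G}_G$ is chordal), I would argue by induction on the number of vertices using a perfect elimination ordering: pick a simplicial vertex $v$ of $\mathcal{G}_G[W]$ (which exists for every induced subgraph of a chordal graph), so that the link of $v$ in $\Delta_W$ is a simplex and deletion of $v$ yields a chordal induced subgraph on one fewer vertex. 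A standard Mayer–Vietoris / cone argument then shows all reduced homologies of $\Delta_W$ above degree zero vanish, killing all Hochster summands outside the linear strand.

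The main obstacle will be the chordal direction: running the induction cleanly so that the reduced homology of every induced flag subcomplex collapses. The forward direction is immediate once the cycle is exhibited; the hard work is leveraging the perfect elimination ordering (or equivalently, a simplicial vertex in every induced subgraph) to deduce a vanishing statement for $\widetilde{H}_*$ in all positive degrees uniformly across induced subsets $W$. Alternatively, one may bypass this inductive topology argument by directly citing Fröberg's theorem, after noting that the graph whose edges are the quadratic generators of $I(V_G)$ is the complement $\overline{\mathcal{G}_G}$, whose complement is $\mathcal{G}_G$ itself, so Fröberg's chordality condition applies verbatim.
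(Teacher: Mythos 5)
Your proposal is correct in substance, but it takes a genuinely different route from the paper: the paper disposes of this statement simply by citing Fr\"oberg's theorem (\cite{F}), whereas you reconstruct that theorem from scratch via Stanley--Reisner theory. Your translation is the right one: $I(V_G)$ is the edge ideal of the complement of $\mathcal{G}_G$, i.e.\ the Stanley--Reisner ideal of the clique complex of $\mathcal{G}_G$, and $2$-regularity of this quadratically generated ideal is equivalent to $2$-linearity of its minimal free resolution, which Hochster's formula converts into the vanishing of $\widetilde{H}_{\ge 1}$ of all induced subcomplexes $\Delta_W$. Your two directions are exactly the standard proof: an induced $\mathcal{C}_k$, $k\ge 4$, has chord-free hence triangle-free induced subgraph, so $\Delta_W\simeq S^1$ and a Betti number appears outside the linear strand, killing $2$-regularity; conversely a simplicial vertex from a perfect elimination ordering plus a Mayer--Vietoris/cone argument shows each connected component of the clique complex of a chordal graph is contractible, and since induced subgraphs of chordal graphs are chordal, every Hochster summand outside the linear strand vanishes (reduced $\widetilde{H}_0$ from disconnected $\mathcal{G}_G[W]$ only feeds the linear strand, which is harmless). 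One small bookkeeping point: the formula you display, with $\widetilde{H}_{j-2}(\Delta_W)$, is the Hochster formula for the ideal $I_\Delta$ rather than for the ring $k[\Delta]$ (for $\beta_{i,i+j}(k[\Delta])$ the index should be $j-1$); this slip does not affect the argument. What the paper's one-line citation buys is brevity; what your argument buys is a self-contained proof and a concrete explanation of the Betti-table computations the paper later performs in Macaulay2.
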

\begin{proof}
See \cite{F}.
\end{proof}

Unlike commutative setup, the relation between positive noncommutative polynomials and SOHS polynomials is simple.  In fact, any positive noncommutative polynomial is a SOHS polynomial (cf. Remark \ref{rem_Heltonresult}).  But the equality between the nonnegative quadratic forms and SOS modulo a suitable projective algebraic set has the potential to allow a SOHS extension of a quasi SOHS polynomial of any even degree.  We encode this in the following theorem, which is nothing but a reinterpretation of Theorem \ref{main theorem_SOHS completion of a quasi-SOHS iff known pattern is chordal} through the lens of algebraic geometry and commutative algebra.  Before that, we quickly recall a couple of definitions along with their notations.

We denote by $S$ the homogeneous coordinate ring of a projective algebraic set $X\subset \mathbb{P}^n$.  That is, if $I$ is the homogeneous ideal of $\mathbb{R}[x_0,\ldots,x_n]$ generated by all forms that vanish on $X$, then $S:=\tfrac{\mathbb{R}[x_0,\ldots,x_n]}{I}$.  By $S_j$ we denote the vector space over $\mathbb{R}$ spanned by the degree $j$ forms of $S$. Then by $P_X$ and $\Sigma_X$ we denote the following two sets.
\begin{equation*}
\begin{split}
P_X&:=\Big\{f\in S_2\;\mid\;f\geq 0 \text{\;on\;all\;real\;points\;} x\in X\Big\},\\
\Sigma_X&:=\Big\{\sum_i h_i^2\;\mid\;h_i\in S_1 \text{\;for\;all\;}i\Big\}.
\end{split}
\end{equation*}   
\begin{theorem}\label{main theorem_SOHS completion of a quasi-SOHS iff unknown pattern is 2-regular}
Let $f\in \Sym\;\mathbb{R}\langle \underline{X}\rangle$ be a quasi SOHS polynomial w.r.t a representation $\mathbf{R}_f$, as in
\eqref{equation_ most general form of a representation of a symmetric polynomial with positive co-efficients of square terms}, along with conditions as in \eqref{conditions on the coefficients of general form of a representation of a symmetric polynomial with positive co-efficients of square terms}.  Then the following are equivalent:
\begin{enumerate}
\item Any such $f$ has a SOHS completion $\overline{f}^{\;G_f,\;W_k}$ w.r.t the partial positive semidefinite Gram-like matrix $G_f$ associated with $\mathbf{R}_f$.
\item The subspace arrangement $V_{G_f}(\subseteq \mathbb{P}^{k-1})$ of $G_f$ satisfies $P_{V_{G_f}}=\Sigma_{V_{G_f}}$.
\item The subspace arrangement $V_{G_f}(\subseteq \mathbb{P}^{k-1})$ of $G_f$ is $2$-regular.
\end{enumerate}
Here $W_k$ is the column vector $\begin{pmatrix}
\zeta_1 & \cdots & \zeta_k
\end{pmatrix}^{t}$, $\zeta_i$, $1\leq i \leq k$, being the monomials appearing in $\mathbf{R}_f$.
\end{theorem}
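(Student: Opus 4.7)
The proof naturally factors into two biconditionals, (1) $\Leftrightarrow$ (3) and (2) $\Leftrightarrow$ (3), each obtainable by invoking a previously stated theorem; transitivity then yields the full three-way equivalence. My plan is to carry out these two reductions and then briefly articulate the conceptual bridge between the SOHS-completion problem and projective algebraic geometry that powers the whole statement.

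For (1) $\Leftrightarrow$ (3), I would chain Theorem \ref{main theorem_SOHS completion of a quasi-SOHS iff known pattern is chordal} with Fr\"oberg's Theorem \ref{2-regular iff chordal}. The former characterises the existence of a SOHS completion $\overline{f}^{\,G_f,W_k}$ in terms of chordality of the specification graph $\mathcal{G}_{G_f}$, and the latter identifies this chordality with 2-regularity of the subspace arrangement $V_{G_f}$. Both are biconditionals, so their composition gives (1) $\Leftrightarrow$ (3) essentially by citation; the quasi-SOHS hypothesis ensures in particular that $G_f$ is a partial positive semidefinite matrix, so Theorem \ref{main theorem_SOHS completion of a quasi-SOHS iff known pattern is chordal} applies.

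For (2) $\Leftrightarrow$ (3), I would appeal to the Blekherman--Sinn--Velasco result (Theorem \ref{non-negative equals sum of sqares iff 2-regular}), which already asserts $P_X = \Sigma_X$ iff $X$ is 2-regular whenever $X$ is a totally real projective algebraic set. The only non-automatic hypothesis to verify is total realness of $V_{G_f}$. Since $V_{G_f}$ is cut out by squarefree quadratic monomials $x_{i-1}x_{j-1}$, its irreducible decomposition is a finite union of coordinate linear subspaces of $\mathbb{P}^{k-1}$; each such component is projectively a standard $\mathbb{P}^{r}$ whose real points are Zariski dense in its complex points, so every component, and hence $V_{G_f}$ itself, is totally real. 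Theorem \ref{non-negative equals sum of sqares iff 2-regular} then applies verbatim and delivers (2) $\Leftrightarrow$ (3).

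The main obstacle is not technical but conceptual: it lies in justifying that the SOHS-completion question for $f$ really is encoded by the nonnegativity-equals-SOS question for quadratic forms on $V_{G_f}$. I would make this precise by observing that the bilinear form $x \mapsto x^{T} G_f\, x$ descends to a well-defined quadratic form on the homogeneous coordinate ring of $V_{G_f}$, precisely because each unspecified off-diagonal entry of $G_f$ corresponds to a monomial $x_{i-1}x_{j-1}$ vanishing on $V_{G_f}$; PSD completions of $G_f$ then translate, via Theorem \ref{Theorem_general form of a SOHS polynomial}, into SOS representations of this quadratic form on $V_{G_f}$, i.e.\ into SOHS completions of $f$. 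Once this translation is spelled out, the three-way equivalence is the joint content of Theorem \ref{main theorem_SOHS completion of a quasi-SOHS iff known pattern is chordal}, Theorem \ref{2-regular iff chordal} and Theorem \ref{non-negative equals sum of sqares iff 2-regular}, glued into one algebro-geometric characterisation.
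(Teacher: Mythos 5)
Your proposal is correct and follows essentially the same route as the paper, whose proof is simply the combination of Theorem \ref{main theorem_SOHS completion of a quasi-SOHS iff known pattern is chordal}, Fr\"oberg's Theorem \ref{2-regular iff chordal}, and the Blekherman--Sinn--Velasco Theorem \ref{non-negative equals sum of sqares iff 2-regular}. Your explicit check that $V_{G_f}$ is totally real (being a union of coordinate linear subspaces) is a detail the paper leaves implicit, and is a welcome addition rather than a deviation.
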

\begin{proof}
Follows from Theorem \ref{main theorem_SOHS completion of a quasi-SOHS iff known pattern is chordal}, Theorem \ref{non-negative equals sum of sqares iff 2-regular} and Theorem \ref{2-regular iff chordal}.
\end{proof}
We now apply Theorem \ref{main theorem_SOHS completion of a quasi-SOHS iff known pattern is chordal} and Theorem \ref{main theorem_SOHS completion of a quasi-SOHS iff unknown pattern is 2-regular} to provide a couple of examples of some quasi SOHS polynomials that admit SOHS completions.
\begin{example}
Consider any quasi SOHS polynomial $f$ whose partial positive semidefinite Gram-like matrix $G_f$ is given as follows :
\begin{equation*}
G_f=\begin{pmatrix}
5 & 5 & 5 & \ast\\
5 & 5 & 5 & \ast\\
5 & 5 & 5 & 5\\
\ast & \ast & 5  & 5\\
\end{pmatrix}.
\end{equation*}
Then the corresponding subspace arrangement $V_{G_f}$ is the projective algebraic set sitting inside $\mathbb{P}^3$ whose defining ideal is the homogeneous ideal $I:=\langle x_0x_3,x_1x_3 \rangle$.  Using Macaulay2 software (cf. \cite{GS}), we compute the free resolution and Betti table of $I$, considered as a $\mathbb{R}[x_0,x_1,x_2,x_3]$-module, as follows :

\textbf{Free resolution of} $I$ : Denoting $\mathbb{R}[x_0,x_1,x_2,x_3]$ by $R$, we have :\\ \begin{equation*}
\xymatrix{R^2&&\ar[ll]_{\begin{pmatrix}
-x_1\\
x_0
\end{pmatrix}} R^1&\ar[l]_{0} 0\\
0 && 1 & 2}
\end{equation*}

\textbf{Betti table of} $I$:\\
\begin{equation*}
\begin{split}
&0 \;\;\; 1\\
\text{total :}\; &2 \;\;\; 1\\
2:\;& 2 \;\;\; 1
\end{split}
\end{equation*}
Therefore, we obtain that the ideal $I$ is $2$-regular and so is the projective algebraic set $V_{G_f}$.  Therefore, by Theorem \ref{main theorem_SOHS completion of a quasi-SOHS iff unknown pattern is 2-regular}, any such quasi SOHS polynomial $f$ admits a SOHS completion w.r.t $G_f$.  This is also evident from Theorem \ref{main theorem_SOHS completion of a quasi-SOHS iff known pattern is chordal} as the specification graph $\mathcal{G}_{G_f}$ 
\begin{center}
\begin{tikzpicture}
\Vertex[y=2,label=$1$]{A}
\Vertex[x=2,y=2,label=$2$]{B}
\Vertex[x=2,label=$3$]{C}
\Vertex[label=$4$]{D}
\Edge[color=black](A)(B)
\Edge[color=black](A)(C)
\Edge[color=black](B)(C)
\Edge[color=black](C)(D)
\end{tikzpicture}
\end{center}
is chordal. 
\end{example}

\begin{example}
Consider any quasi SOHS polynomial $f$ whose partial positive semidefinite Gram-like matrix $G_f$ has the following disconnected specification graph $\mathcal{G}_f$ :
\begin{center}
\begin{tikzpicture}
\Vertex[x=1,y=2,label=$1$]{A}
\Vertex[x=3,y=2,label=$2$]{B}
\Vertex[x=4,y=1,label=$3$]{C}
\Vertex[x=2,label=$4$]{D}
\Vertex[y=1,label=$5$]{E}
\Edge[color=black](A)(B)
\Edge[color=black](C)(D)
\end{tikzpicture}
\end{center}
Then the corresponding subspace arrangement $V_{G_f}$ is the projective algebraic subset of $\mathbb{P}^4$ whose defining ideal is the homogeneous ideal $I:=\langle x_0x_2,x_0x_3, x_0x_4, x_1x_2,x_1x_3,x_1x_4,x_2x_4,x_3x_4\rangle$.  Using Macaulay2 software, we compute the free resolution and Betti table of $I$ considered as a $\mathbb{R}[x_0,x_1,x_2,x_3,x_4]$-module, as follows :

\textbf{Free resolution of} $I$ : Denoting $\mathbb{R}[x_0,x_1,x_2,x_3,x_4]$ by $R$, we have :\\ \begin{equation*}
\xymatrix{R^8&&&&\ar[llll]_{\begin{pmatrix}
-x_1 & \cdots & 0\\
0 & \cdots & 0\\
0 & \cdots & 0\\
x_0 & \cdots & 0\\
0 & \cdots & -x_4\\
0 & \cdots & x_3\\
0 & \cdots & 0\\
0 & \cdots & 0
\end{pmatrix}} R^{14}&&&\ar[lll]_{\begin{pmatrix}
x_4 & \cdots & 0\\
\vdots & \ddots & \vdots \\
0 & \cdots & x_2
\end{pmatrix}} R^9&&\ar[ll]_{\begin{pmatrix}
-x_3 & 0\\
\vdots & \vdots\\
0 & x_0
\end{pmatrix}} R^2&\ar[l]_{0} 0\\
0 &&&& 1 &&& 2 && 3 & 4}
\end{equation*}

\textbf{Betti table of} $I$:\\
\begin{equation*}
\begin{split}
&0 \;\;\;\; 1 \;\;\;\; 2 \;\;\; 3\\
\text{total :}\; &8 \;\;\; 14 \;\;\; 9 \;\;\; 2\\
2:\;&8 \;\;\; 14 \;\;\; 9 \;\;\; 2\\
\end{split}
\end{equation*}
Therefore, we obtain that the ideal $I$ is $2$-regular and so is the projective algebraic set $V_{G_f}$.  Therefore, by Theorem \ref{main theorem_SOHS completion of a quasi-SOHS iff unknown pattern is 2-regular}, any such quasi SOHS polynomial admits a completion w.r.t $G_f$.  In fact, it also follows from Theorem \ref{main theorem_SOHS completion of a quasi-SOHS iff known pattern is chordal} as the specification graph $\mathcal{G}_f$ is chordal.
\end{example}

We now apply Theorem \ref{main theorem_SOHS completion of a quasi-SOHS iff known pattern is chordal} and Theorem \ref{main theorem_SOHS completion of a quasi-SOHS iff unknown pattern is 2-regular} to provide an example of some quasi SOHS polynomials that do not admit SOHS completions.
\begin{example}
Consider the partial positive semidefinite matrix, as in \eqref{equation_not completable}, 
\begin{equation*}
G^{\prime}=\begin{pmatrix}
5 & 5 & \ast & \sqrt{5}\\
5 & 5 & 5 & \ast\\
\ast & 5 & 5 & 5\\
\sqrt{5} & \ast & 5  & 5\\
\end{pmatrix}.
\end{equation*}
Then the corresponding subspace arrangement $V_{G^{\prime}}$ is the projective algebraic set sitting inside $\mathbb{P}^3$ whose defining ideal is the homogeneous ideal $J:=\langle x_0x_2,x_1x_3 \rangle$.  Using Macaulay2 software (cf. \cite{GS}), we compute the free resolution and Betti table of $I$, considered as a $\mathbb{R}[x_0,x_1,x_2,x_3]$-module, as follows :

\textbf{Free resolution of} $J$ : Denoting $\mathbb{R}[x_0,x_1,x_2,x_3]$ by $R$, we have :\\ \begin{equation*}
\xymatrix{R^2&&\ar[ll]_{\begin{pmatrix}
-x_1x_3\\
x_0x_2
\end{pmatrix}} R^1&\ar[l]_{0} 0\\
0 && 1 & 2}
\end{equation*}

\textbf{Betti table of} $J$:\\
\begin{equation*}
\begin{split}
&0 \;\;\; 1\\
\text{total :}\; &2 \;\;\; 1\\
2:\;& 2 \;\;\; \textbf{.}\\
3:\;& \textbf{.} \;\;\; 1\\
\end{split}
\end{equation*}
Therefore, we obtain that the ideal $J$ is $3$-regular and so is the projective algebraic set $V_{G^{\prime}}$.  Therefore, by Theorem \ref{main theorem_SOHS completion of a quasi-SOHS iff unknown pattern is 2-regular}, there exists a quasi SOHS polynomial that doesn't admit a SOHS completion w.r.t $G^{\prime}$.  In fact, the polynomial $f^{\prime}$, as given in \eqref{equation_apparent counter example of completable iff chordal is not at all so}, is one such.  This is not all surprising.  In fact, it follows from Theorem \ref{main theorem_SOHS completion of a quasi-SOHS iff known pattern is chordal} as the specification graph $G^{\prime}$
\begin{center}
\begin{tikzpicture}
\Vertex[y=2,label=$1$]{A}
\Vertex[x=2,y=2,label=$2$]{B}
\Vertex[x=2,label=$3$]{C}
\Vertex[label=$4$]{D}
\Edge[color=black](A)(B)
\Edge[color=black](A)(D)
\Edge[color=black](B)(C)
\Edge[color=black](C)(D)
\end{tikzpicture}
\end{center}
is the cycle $\mathcal{C}_4$ on four vertices, which is not a chordal graph.  
\end{example}

\begin{remark}
The flexibility of a Gram-like matrix over a Gram matrix is quite evident from these examples.  As we deal with Gram-like matrices, we could play around with graphs having any number of vertices.  For a Gram matrix, we can only deal with graphs with $s(d,n)$ many vertices for some $d\geq 0$ and $n>0$.       
\end{remark}
\begin{remark}
    As a next step, it would be interesting to generate algorithms out of the criteria provided in Theorem \ref{Theorem_conditions satisfied my monomial vectors if if G-l MPE exists}, Theorem \ref{Theorem_NASC for block matrix form of a Gram-matrix of the extension}, Theorem \ref{Block PSD technique_with non-SOHS part having multiple terms}, Theorem \ref{main theorem_SOHS completion of a quasi-SOHS iff known pattern is chordal} and Theorem \ref{main theorem_SOHS completion of a quasi-SOHS iff unknown pattern is 2-regular} and use that further to produce SDP corresponding to these extension problems.
\end{remark}

\section*{Acknowledgements}
  Both the authors extend their gratitude to Prof. Sumesh K, Prof. Arijit Dey and Prof. Sriram B for their valuable suggestions regarding the article.  The authors acknowledge Rajib Sarkar and Sreenanda S B for their assistance regarding the use of Macaulay2 software.  The authors thank Dr. Aditi Howlader for suggesting some useful references.  The authors also thank Agnihotri Roy for reexamining some tedious calculations.  The first named author would like to thank Indian Institute of Technology Madras for financial support (Office order No.F.ARU/R10/IPDF/2024).
  The second named author would like to thank Indian Institute of Technology Madras and ICSR for financial support (Project number - SB22231267MAETWO008573).

\end{document}